\documentclass[reqno,12pt]{amsart}

\usepackage[pagebackref]{hyperref}
\usepackage[left=2.5cm,right=2.5cm]{geometry}
\usepackage{stackrel}
\newcommand{\Ent}{{\rm Ent}}

\def\Var{{\rm Var}}
\def\E{\mathbb E}
\def\p{\mathbb P}

\newcommand*{\ind}[1]{\mathbf{1}_{\{#1\}}}

\newcommand{\D}{\mathbf{D}}
\newcommand{\R}{\mathbb{R}}

\newtheorem{lemma}{Lemma}[section]

\newtheorem{theorem}[lemma]{Theorem}
\newtheorem{prop}[lemma]{Proposition}
\newtheorem{cor}[lemma]{Corollary}
\newtheorem{defi}[lemma]{Definition}

\newcommand{\bn}{\Big\|}

\title[Moment estimates implied by modified log-Sobolev inequalities]{Moment estimates implied by modified log-Sobolev inequalities.}

\thanks{Research partially supported by Polish Ministry of Science and Higher Education Iuventus Plus
  Grant no. IP 2011 000171.}

\author{Rados{\l}aw Adamczak}%
\address[RA]{Institute of Mathematics, University of Warsaw, ul. Banacha 2,
  02-097 Warszawa, POLAND. R.Adamczak@mimuw.edu.pl}

\author{Witold Bednorz}%
\address[WB]{Institute of Mathematics, University of Warsaw, ul. Banacha 2,
  02-097 Warszawa, POLAND. W.Bednorz@mimuw.edu.pl}

\author{Pawe{\l} Wolff} %
\address[PW]{Institute of Mathematics, University
  of Warsaw, ul. Banacha 2, 02-097 Warszawa, POLAND; and Institute of Mathematics, Polish Academy of Sciences, ul. \'Sniadeckich 8, 00-956 Warszawa, POLAND. P.Wolff@mimuw.edu.pl}

\begin{document}

\begin{abstract}

We study a class of logarithmic Sobolev inequalities with a general form of the energy functional. The class generalizes various examples of modified logarithmic Sobolev inequalities considered previously in the literature. Refining a method of Aida and Stroock for the classical logarithmic Sobolev inequality, we prove that if a measure on $\R^n$ satisfies a modified logarithmic Sobolev inequality then it satisfies a family of $L^p$-Sobolev-type inequalities with non-Euclidean norms of gradients (and dimension-independent constants). The latter are shown to yield various concentration-type estimates for deviations of smooth (not necessarily Lipschitz) functions and measures of enlargements of sets corresponding to non-Euclidean norms. We also prove a two-level concentration result for functions of bounded Hessian and measures satisfying the classical logarithmic Sobolev inequality.

\end{abstract}

\subjclass{60E15, 26D10}
\keywords{Concentration of measure, modified logarithmic Sobolev inequalities}

\maketitle

\section{Introduction}

Concentration of measure inequalities constitute one of the strongest and most widely used tools in modern high dimensional probability, geometry and analysis, crucial in establishing e.g. limit theorems or existence proofs by probabilistic method. Their importance was first noted in the 1970s and since then many powerful approaches have been established, which allow to prove concentration results, such as isoperimetric, transportation or functional inequalities (we refer to the monograph \cite{LedouxConcBook} by Ledoux for an overview). Among the functional inequalities approaches, the two which have proven particularly useful are those based on the Poincar\'e and logarithmic Sobolev inequalities. Recall that a Borel probability measure $\mu$ on $\R^n$ satisfies the Poincar\'e inequality with constant $D$ if
\begin{displaymath}
\Var_\mu f \le D \E_\mu |\nabla f|^2
\end{displaymath}
for all sufficiently smooth functions $f \colon \R^n \to \R$, whereas the logarithmic Sobolev inequality holds if
for all such $f$,
\begin{displaymath}
\Ent_\mu f^2 \le D \E_\mu |\nabla f|^2,
\end{displaymath}
where $\Ent_\mu f^2 = \E_\mu f^2\log f^2 - \E_\mu f^2\log \E_\mu f^2$ is the usual entropy of $f^2$ (throughout the article we use the probabilistic notation, treating $f$ as a random variable on the probability space $(\R^n, \mathcal{B}(\R^n),\mu)$, in particular $\E_\mu$ denotes integration with respect to $\mu$). Above and in the rest of the paper $|\cdot|=|\cdot|_2$ always denotes the standard Euclidean norm in $\R^n$.

As is well known, the Poincar\'e inequality yields subexponential concentration of Lipschitz functions, whereas the logarithmic Sobolev inequality implies sub-Gaussian estimates. There are also other functional inequalities, based either on a modification of the variance functional in the Poincar\'e inequality \cite{MR954373, MR1796718} or a modification of the right-hand side in the logarithmic Sobolev inequalities~\cite{MR1800062,MR2198019,MR2351133}, which yield concentration estimates with super-Gaussian rates or with rates between subexponential and sub-Gaussian. The general form of such log-Sobolev inequalities is
\begin{equation}\label{eq:mLSI-intro}
\Ent_\mu f^2 \le D \E_\mu \Psi\Big(\frac{\nabla f }{f}\Big)f^2
\end{equation}
for an appropriate function $\Psi \colon \R_+ \to \R_+$ (we postpone the introduction of technical conditions on the function $\Psi$ to subsequent sections).

Together with the discovery of two-level concentration inequalities by Talagrand (initially for the exponential distribution~\cite{MR1122615}), which improve the estimates based on Poincar\'e inequality and provide sub-Gaussian estimates for relatively small deviations and subexponential bounds for larger ones, a natural question arose whether results of this type could also be obtained via functional inequalities. It was soon answered in the affirmative by Bobkov and Ledoux \cite{BobLed_exp} who derived new modified logarithmic Sobolev inequalities, which were subsequently extended by Gentil, Guillin and Miclo \cite{MR2198019, MR2351133} to inequalities yielding other (two-level and also more general) types of concentration.

Concentration estimates are usually derived from modified logarithmic Sobolev inequalities via differential inequalities on the log-Laplace transform of the function, a method commonly known as the Herbst argument. This method, being very elegant and powerful is however restricted to functions with finite Laplace transform, such as Lipschitz functions. For functions which do not satisfy the Lipschitz condition one can still use a modification of the Herbst approach, proposed by Aida and Stroock \cite{MR1258492}. It relies on the analysis of moments and provides $L_p$-type Sobolev inequalities, which can yield concentration, provided that one controls the gradient of a function. More precisely, Aida and Stroock proved that the logarithmic Sobolev inequality implies inequalities of the form
\begin{displaymath}
\|f - \E_\mu f\|_p \le C_D\sqrt{p}\bn |\nabla f| \bn_p
\end{displaymath}
for $p \ge 2$, where $\|g\|_p = (\E_\mu |g|^p)^{1/p}$ is the $p$-th moment of the function $g$. Clearly, controlling all moments of $\nabla f$ allows then to derive concentration results.

The aim of this work is to provide a uniform framework which would allow to treat the aforementioned modified logarithmic inequalities and provide concentration estimates with general profiles of the deviation bound for functions which are not necessarily Lipschitz. Our approach is based on a further refinement of the method by Aida and Stroock, which gives $L_p$-type Sobolev inequalities with non-Euclidean, $p$-dependent norms of the gradient on the right-hand side. The form of these norms corresponds to the type of concentration satisfied by the measure. As a particular case we obtain moment inequalities for smooth functions, which generalize moment estimates for linear combinations of independent random variables, derived by Gluskin and Kwapie\'n \cite{GK} (see Theorem \ref{thm:GK} below).

We remark that even in the classical Euclidean framework under defective log-Sobolev inequalities, our results improve certain aspects of the work by Aida and Stroock.

The precise form of the inequalities we obtain depends on the function $\Psi$ in the modified logarithmic Sobolev inequality~\eqref{eq:mLSI-intro} and to describe it we need to introduce some technical notation. For this reason we postpone the precise formulation to subsequent sections and now we just announce that we will obtain inequalities of the form
\begin{displaymath}
\|f - \E_\mu f\|_p \le C_D\bn|\nabla f|_{\Psi_p}\bn_p,
\end{displaymath}
for some norm $|\cdot|_{\Psi_p}$ on $\R^n$ associated with $p$ and the function $\Psi$. The geometry of the norms $|\cdot|_{\Psi_p}$ will be responsible for the character of concentration of measure valid for the measure $\mu$. In the classical case, considered by Aida and Stroock, we have simply $|x|_{\Psi_p} = \sqrt{p}|x|$.

As corollaries we obtain concentration results for not necessarily Lipschitz functions as well as bounds on the size of enlargement of sets in a setting more general than considered before. We also prove some concentration results for Banach space valued polynomial chaos in the case of not necessarily product measures, extending previous work by Borell \cite{Bo}, Arcones-Gin\'e \cite{MR1201060}, {\L}ochowski \cite{Loch} and Adamczak \cite{AdLogSobConv}. Additionally we derive comparison principles for real-valued polynomials (or more generally functions with bounded derivatives of higher order) generalizing previous estimates by Adamczak and Wolff \cite{AdLogSobConv} and a two-level concentration result for functions with bounded Hessian and measures satisfying the classical logarithmic Sobolev inequality.

\medskip
\medskip
\paragraph{\bf Organization of the paper} In Section \ref{sec:preliminaries} we introduce the general framework for the inequalities we consider. Section \ref{sec:main_results} is devoted to the presentation of our results. The proofs are deferred to Section \ref{sec:proofs}.

\section{Preliminaries \label{sec:preliminaries}}
\subsection{Basic notation}
We will be working mostly with a fixed probability measure $\mu$, therefore we will denote $\|f\|_p = (\E_\mu |f|^p)^{1/p}$, suppressing the dependence on $\mu$ in the notation.

Unless otherwise stated, $x_i$, $i=1,\ldots,n$, will denote coordinates of a point $x\in \R^n$, i.e. $x = (x_1,\ldots,x_n)$. To distinguish norms on $\R^n$ from the notation for moments, we will denote the former with single bars, e.g. for $r \ge 1$, $|\cdot|_r$ will stand for the $\ell_r^n$ norm, defined by $|x|_r = (\sum_{i=1}^n |x_i|^r)^{1/r}$. Other important norms will be introduced in the sequel. In the case of $r=2$ we will often suppress the subscript $r$ and write simply $|\cdot|$ for $|\cdot|_2$.

By $C,c$ we will denote universal constants, whereas the notation $C(a)$ or $C_a$ will be used for constants depending only on a parameter $a$. The values of constants may differ between occurrences.

\subsection{Generalized Orlicz functions and modified logarithmic Sobolev inequalities}
To formulate our results let us first introduce the general abstract form of the inequalities we will consider. Next we will illustrate it with examples of inequalities known in the literature, which fit our framework.

In what follows we will consider generalized Orlicz functions on $\R^n$, satisfying some standard technical conditions given in the following

\begin{defi}
We will say that a function $\Psi\colon \R^n \to \R_+ \cup \{\infty\}$ satisfies the condition (C) if the following holds
\begin{enumerate}
\item[(C1)\label{C:zero}] $\Psi(0) = 0$ and $\Psi$ is continuous at $0$,
\item[(C2)\label{C:positive}] $\Psi(x) > 0$ for $x \neq 0$,
\item[(C3)\label{C:infty}] $\lim_{|x|\to \infty} \Psi(x) = \infty$,
\item[(C4)\label{C:left-cont}] for every $x \in \R^n$, the function $t\mapsto \Psi(tx)$ is left-continuous on $(0,\infty)$,
\item[(C5)\label{C:non-dec}] for every $x \in \R^n$, the function $t \mapsto \Psi(tx)/t$ is non-decreasing on $(0,\infty)$,
\item[(C6)\label{C:symmetric}] $\Psi$ is symmetric, i.e. $\Psi(x) = \Psi(-x)$ for all $x \in \R^n$.
\end{enumerate}
\end{defi}

Consider a probability measure $\mu$ on $\R^n$, absolutely continuous with respect to the Lebesgue measure. The general class of functional inequalities we will consider is described in the following definition.

\begin{defi}
Given a function $\Psi$, satisfying the condition (C) and a positive constant $D$ we will say that $\mu$ satisfies the \emph{modified logarithmic Sobolev inequality} $mLSI(\Psi,D)$ if for every bounded locally Lipschitz function $f \colon \R^n \to (0,\infty)$,
\begin{align}\label{eq:mLSI}
\Ent_\mu f^2 \le D \E_\mu \Psi\Big(\frac{\nabla f }{f}\Big)f^2.
\end{align}
\end{defi}
Note that by the Rademacher theorem, $\nabla f$ exists $\mu$-a.s. By standard arguments one can show that $\mu$ satisfies $mLSI(\Psi,D)$ if and only if the above inequality is satisfied by all smooth functions of bounded support.

We will also consider defective versions of the logarithmic Sobolev inequalities.

\begin{defi}
Given a function $\Psi$, satisfying the condition (C) and constants $D, d \ge 0$ we will say that $\mu$ satisfies the \emph{defective modified logarithmic Sobolev inequality} $dmLSI(\Psi,D,d)$, if for every bounded locally Lipschitz function $f \colon \R^n \to (0,\infty)$,
\begin{align}\label{eq:dmLSI}
\Ent_\mu f^2 \le D \E_\mu \Psi\Big(\frac{\nabla f}{f}\Big)f^2 + d\E_\mu f^2.
\end{align}
\end{defi}

As already mentioned, inequalities of this form have been considered by many authors starting from the classical work by Stam \cite{MR0109101}, Federbush \cite{Federbush}, Gross \cite{MR0420249} on the logarithmic Sobolev inequality, which in our language corresponds to the choice $\Psi(x) = |x|^2$, where $|\cdot|$ is the standard Euclidean norm. Modified versions were first considered by Bobkov-Ledoux \cite{MR1800062}, then e.g. by Bobkov-Zegarli\'nski \cite{MR2146071}, Gentil-Guillin-Miclo \cite{MR2198019,MR2351133}, Barthe-Roberto \cite{MR2430612} and Barthe-Kolesnikov \cite{MR2438906}. The defective versions were investigated e.g. by Rothaus \cite{MR812396}, Bobkov-Zegarlinski \cite{MR2146071}, Barthe-Kolesnikov \cite{MR2438906}, who obtained general criteria under which one can infer the non-defective version from the defective one. For instance it is known that under some additional conditions on the function $\Psi$, the defective inequality implies the non-defective version if one assumes certain Poincar\'e type inequalities.

Below we present the best known examples of modified log-Sobolev inequalities.
\begin{itemize}
\item The inequality \eqref{eq:mLSI} with $\Psi(x) = \|x\|^q$, where $q \in (1,2]$ and $\|\cdot\|$ is some norm on $\R^n$ was introduced by Bobkov and Ledoux in \cite{MR1800062}.
\item In \cite{BobLed_exp} Bobkov and Ledoux considered $\Psi(x) = \sum_{i=1}^n H(x_i)$, where
\begin{displaymath}
H(x) = \left\{\begin{array}{ccc}
x^2 &\textrm{for}& |x| \le 1/2\\ \infty&\textrm{for}& |x| > 1/2.
\end{array}\right.
\end{displaymath}
This inequality was used to recover Talagrand's concentration inequality for the exponential distribution \cite{MR1122615}. In \cite{MR2198019,MR2351133} Gentil, Gullin and Miclo generalized the Bobkov-Ledoux inequality, by considering $H(x) = x^2\ind{|x|\le 1} + \Phi(|x|)\ind{|x| > 1}$ for a convex function $\Phi$. When $\Phi(x)/x^2$ is non-decreasing on the positive half-line, the characterization of measures on $\R$ which satisfy $mLSI(H,D)$ for some finite $D$ was obtained in \cite{MR2430612} (the characterization of the classical case $\Psi(x) = x^2$ was obtained earlier in the seminal paper \cite{MR1682772} by Bobkov-G{\"o}tze).
\item The inequalities \eqref{eq:mLSI} and \eqref{eq:dmLSI} for general $\Psi$ corresponding to a measure $\mu(dx) =  e^{-V(x)}dx$ on $\R^n$, under certain Bakry-Emery type conditions relating $\Psi$ and $V$ were studied e.g. by Barthe-Kolesnikov \cite{MR2438906}, Gentil \cite{MR2487856}, Shao \cite{MR2520723}.
\end{itemize}

Since the aforementioned articles introduce many different approaches for proving modified logarithmic Sobolev inequalities and the presentation of all of them is beyond the scope of this paper let us only mention that there is a multitude of examples of measures satisfying the inequalities in question. Currently available tools allow to both find mild sufficient conditions for a measure to satisfy the inequality \eqref{eq:mLSI} (resp. \eqref{eq:dmLSI}) with a given function $\Psi$ or starting from a measure find an appropriate $\Psi$ so that the inequality \eqref{eq:mLSI} (resp. \eqref{eq:dmLSI}) holds. Moreover, the usual tensorization and perturbation arguments developed for the classical logarithmic Sobolev inequality \cite{logSob} work also in the modified setting and allow to construct further examples. In particular, the one-dimensional characterizations of modified logarithmic Sobolev inequalities allow to consider inequalities on $\R^n$ with $\Psi(x) = \sum_{i=1}^n H(x_i)$, first for product measures and then for their bounded perturbations.

\subsection{Families of Orlicz norms}

Let us introduce another notion we need to formulate our results, namely a family of (quasi-)norms related to the function $\Psi$. In the Sobolev type inequalities we are about to derive  these norms will be applied to the gradient of a function.

For $p > 0$ define
\begin{displaymath}
\Psi_p(x) = \frac{1}{p}\Psi(px).
\end{displaymath}
Assume $\Psi$ satisfies the condition (C). Then so does $\Psi_p$ for all $p > 0$ and we consider a family of (quasi-)norms $|\cdot|_{\Psi_p}$ on $\R^n$, defined as
\begin{displaymath}
|x|_{\Psi_p} = \inf\{a > 0\colon \Psi_p(x/a) \le 1 \} = \inf\{a > 0\colon \Psi(px/a) \le p\}.
\end{displaymath}
It is easy to see that $|x|_{\Psi_p}$ is indeed a quasi-norm on $\R^n$, i.e. $|x|_{\Psi_p} = 0$ iff $x = 0$, $|tx|_{\Psi_p} = |t||x|_{\Psi_p}$ for $t \in \R$ and $|x+y|_{\Psi_p} \le K_{\Psi_p}(|x|_{\Psi_p} + |y|_{\Psi_p})$ for some constant $K_{\Psi_p}$. If $\Psi$ is in addition convex, then $|\cdot|_{\Psi_p}$ is a norm on $\R^n$. In what follows we will refer to the functional $|\cdot|_{\Psi_p}$ as the $\Psi_p$-norm or simply the norm, even if the function $\Psi_p$ is not necessarily convex.
Note also that from~(C5) it follows that the norms $|\cdot|_{\Psi_p}$ are non-decreasing in $p$, i.e. for $0 < p < q$ and any $x \in \R^n$,
\begin{align}\label{ineq:monotonicity}
  |x|_{\Psi_p} \le |x|_{\Psi_q}.
\end{align}

\paragraph{\bf Examples} To demonstrate the reasons for introducing the above abstract definition of the norms $|\cdot|_{\Psi_p}$ and to show their role in the derivation of concentration of measure results, we will now list some families of norms corresponding to special choices of the function $\Psi$ and present some special cases of known Sobolev type inequalities.

\begin{enumerate}
\item Clearly, if $\Psi$ is homogeneous, in particular if it is a norm on $\R^n$, then for any $p > 0$, $|\cdot|_{\Psi_p} = \Psi$. This shows that to obtain interesting families of norms, $|\cdot|_{\Psi_p}$, which can be used to control the behaviour of moments of random variables, one needs to consider functions $\Psi$ which grow faster than linearly.

\item If $\Psi(x) = \|x\|^\alpha$ for some norm $\|\cdot\|$ on $\R^n$ and $\alpha > 1$, then $|x|_{\Psi_p} = p^{1/\alpha^\ast}\|x\|$, where $\alpha^\ast$ is the H\"older conjugate of $\alpha$. This simple example can already illustrate the role played by the norms $|\cdot|_{\Psi_p}$ in our estimates. For instance, it is well known~\cite{PisierProbabMethods} that for a standard Gaussian measure $\gamma$ on $\R^n$, for every smooth function $f \colon \R^n \to \R$ and every $p \ge 2$,
    \begin{align}\label{eq:Maurey_Pisier}
    \|f - \E_\gamma f\|_p\le C\sqrt{p}\bn |\nabla f|\bn_p
    \end{align}
    for some absolute constant $C$ (where the moments $\|\cdot\|_p$ are calculated with respect to $\gamma$). Note that if $|\nabla f| \le L$ on $\R^n$, then by applying the Chebyshev inequality in $L_p$ and optimizing in $p$, one recovers (up to constants) the classical Gaussian concentration inequality, i.e.
    \begin{displaymath}
    \gamma(|f - \E_\gamma f|\ge t) \le 2\exp(-ct^2/L^2)
    \end{displaymath}
    for some universal constant $c$.
    It is easy to see that for $\Psi(x) = |x|^2$, the right hand side of \eqref{eq:Maurey_Pisier} can be written as $C\Big\||\nabla f|_{\Psi_p}\Big\|_p$, so the Sobolev inequality~\eqref{eq:Maurey_Pisier} is equivalent to
    \begin{align}\label{eq:modSobolev}
   \|f - \E_\gamma f\|_p \le C\Big\||\nabla f|_{\Psi_p}\Big\|_p.
    \end{align}

\item Let us now consider an example corresponding to a two-level concentration of measure. Let $\Psi(x) = \sum_{i=1}^n (|x_i|^2\ind{|x_i|\le 1} + |x_i|^r\ind{|x_i|>1})$ for some $r \in [2,\infty)$. Recall that by $|\cdot|_r$ we denote the $\ell_r^n$ norm, i.e. for $x = (x_1,\ldots,x_n)$, $|x|_r = (\sum_{i=1}^n |x_i|^r)^{1/r}$. One can show that
    \begin{displaymath}
    |x|_{\Psi_p} \simeq \sqrt{p}|x| + p^{1/r^\ast}|x|_r,
    \end{displaymath}
    where $\simeq$ denotes two-sided estimates matching up to a universal multiplicative constant.

    It turns out that the expression given above appears in Sobolev inequalities leading to two-level tail estimates. Namely, in \cite{Nonlipschitz} it is proved that
    if $r \in [2,\infty)$ and a measure $\mu$ on $\R^n$ satisfies the $mLSI(\Psi,D)$, then for all smooth functions $f$ and $p \ge 2$,
    \begin{displaymath}
    \|f - \E_\mu f\|_p\le C_{r,D} \Big(\sqrt{p}\Big\| |\nabla f|_2 \Big\|_p + p^{1/r^\ast} \Big\| |\nabla f|_r\Big\|_p \Big)\simeq C_{r,D} \Big\||\nabla f|_{\Psi_p}\Big\|_p,
    \end{displaymath}
    (where the moments $\|\cdot\|_p$ are calculated with respect to $\mu$), which implies that for any Lipschitz function $f$ and $t > 0$, we have
    \begin{align}\label{eq:two-level_large_r}
    \mu(|f - \E_\mu f| \ge t) \le 2\exp\Big(-c_{r,D} \min\Big(\frac{t^2}{a^2},\frac{t^{r^\ast}}{b^{r^\ast}}\Big)\Big),
    \end{align}
    where $a = \sup_{x \in \R^n}  |\nabla f(x)|_2$, $b = \sup_{x \in \R^n} |\nabla f(x)|_r$. This corresponds to Talagrand's two-level concentration inequality. As we will see, a similar moment bound holds also for $r \in (1,2)$ and even for more general functions $\Psi$. We remark that for $r \in [1,2]$, we have
    \begin{displaymath}
    |x|_{\Psi_p} \simeq p^{1/r^\ast} |(x_i^\ast)_{i=1}^{\lfloor p\rfloor}|_r + \sqrt{p}|(x_i^\ast)_{i=\lfloor p\rfloor +1}^n|_2
    \end{displaymath}
    where $x_1^\ast \ge \ldots \ge x_n^\ast$ is the non-increasing rearrangement of the sequence $|x_1|,\ldots,|x_n|$.

\item Let us remark that in the special case, when $\mu$ is a product of measures on $\R$ with log-concave tails and $f$ is a linear functional, the inequalities of the form
\begin{displaymath}
\|f - \E f\|_p \le \Big\||\nabla f|_{\Psi_p}\Big\|_p
\end{displaymath}
have been proved by Gluskin and Kwapie\'n in \cite{GK}. In Section \ref{sec:real_polynomials} we will use their result (which we recall in Theorem \ref{thm:GK}) to give an interpretation of our results in terms of auxiliary i.i.d. sequences.
\end{enumerate}

\section{Main results\label{sec:main_results}}

In this section we will present all our results, deferring their proofs to Section \ref{sec:proofs}.

\subsection{Standing assumptions} Let us first state the main assumptions we are going to use throughout the article.

All the measures we will consider are assumed to be absolutely continuous with respect to the Lebesgue measure and this assumption will not be explicitly stated in all the theorems.

Our usual assumption on the function $\Psi$, beside the condition (C), will be following growth condition: for some $1<\alpha \le 2\le \beta < \infty$ and $K \ge 1$,
\begin{align}\label{ineq:growth-condition}
  \tag{$G_{K,\alpha, \beta}$}
 \forall_{x \in \R^n \setminus\{0\}} \ \forall_{t \ge 1} \quad K^{-1} t^\alpha \le \frac{\Psi(tx)}{\Psi(x)} \le K t^\beta.
\end{align}
Note that the condition~\eqref{ineq:growth-condition} is stable under taking max or sum of functions $\Psi$, and if $\Psi$ satisfies \eqref{ineq:growth-condition} then so does $\Psi_p$ for any $p > 0$.

\subsection{Sobolev type inequalities}

Let us now present our main results, i.e. Sobolev type inequalities, which constitute a basis for all the subsequent corollaries.

\subsubsection{The defective case \label{sec:defective}}
We will start with moment estimates implied by defective inequalities. Recall the definition of the inequality $dmLSI(\Psi,D,d)$ given in formula \eqref{eq:dmLSI}. The proofs of results of this section are provided in Section \ref{sec:defectiv_proofs}.

\begin{theorem}\label{thm:Sobolev}
Assume that $\Psi \colon \R^n \to \R$ satisfies the condition (C) and \eqref{ineq:growth-condition} for some $1<\alpha \le 2\le \beta < \infty$ and $K \ge 1$.
Let $\mu$ be a probability measure on $\R^n$ satisfying $dmLSI(\Psi,D,d)$. Then for all locally Lipschitz functions $f\colon \R^n \to \R$ and all $p \ge \beta$,
\begin{align}\label{eq:mSobolev}
\|f\|_p \le e^{2d/\beta} \|f\|_\beta + \frac{2e}{\alpha-1}\big((KD)^{1/\alpha} \lor (KD)^{1/\beta}\big) \bn |\nabla f|_{\Psi_p}\bn_p.
\end{align}
\end{theorem}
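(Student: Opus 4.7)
The proof adapts the Aida--Stroock moment method to the generalized Orlicz setting. Applying $dmLSI(\Psi,D,d)$ to (a standard regularization of) the function $g=|f|^{p/2}$ and passing to the limit yields
\[
  \Ent_\mu |f|^p \le D\,\E_\mu\Psi\Bigl(\tfrac{p}{2}\tfrac{\nabla f}{f}\Bigr)|f|^p + d\,\E_\mu|f|^p.
\]
Writing $u(p)=\|f\|_p$ and $N(p)=\bn|\nabla f|_{\Psi_p}\bn_p$, the identity $\Ent_\mu|f|^p = p^2 u(p)^{p-1}u'(p)$ converts this into a differential inequality for $u(p)$.

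The right-hand side is estimated by a pointwise bound combined with H\"older. Property (C5) gives $\Psi(\tfrac{p}{2}y)\le \tfrac{p}{2}\Psi_p(y)$; for each $x$, setting $a=|\nabla f(x)|_{\Psi_p}$ one has $\Psi_p(\nabla f/a)\le 1$ by definition, and applying \eqref{ineq:growth-condition} separately in the regimes $a\le|f|$ and $a\ge|f|$ gives
\[
  \Psi_p\Bigl(\tfrac{\nabla f}{f}\Bigr)|f|^p \le K\bigl(a^\alpha |f|^{p-\alpha} + a^\beta|f|^{p-\beta}\bigr).
\]
Taking expectation and applying H\"older's inequality (legal since $p\ge\beta\ge\alpha$) produces
\[
  p^2 u(p)^{p-1} u'(p) \le \tfrac{DKp}{2}\bigl(N(p)^\alpha u(p)^{p-\alpha} + N(p)^\beta u(p)^{p-\beta}\bigr) + d\,u(p)^p.
\]

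To integrate from $\beta$ to $p$ without a spurious $\log(p/\beta)$ factor, I would first establish the scaling lemma $|x|_{\Psi_s} \le K^{1/\alpha}(s/p)^{1/\alpha^*}|x|_{\Psi_p}$ for $\beta\le s\le p$ (a short consequence of \eqref{ineq:growth-condition}), which implies $N(s)\le K^{1/\alpha}(s/p)^{1/\alpha^*}N(p)$ and hence $\int_\beta^p N(s)^\alpha/s\,ds\le \tfrac{K}{\alpha-1}N(p)^\alpha$, with an analogous bound for the $\beta$-exponent. I would then split into two cases. If $u(s)\ge N(s)$ along the integration path, then $N^\beta u^{\alpha-\beta}\le N^\alpha$, and the substitution $v=u^\alpha$ reduces the inequality to the linear first--order ODE
\[
  v'(s) - \tfrac{\alpha d}{s^2}v(s) \le \tfrac{\alpha DK N(s)^\alpha}{s},
\]
which I solve with the integrating factor $e^{\alpha d/s}$; extracting an $\alpha$-th root yields the theorem's bound on the $(KD)^{1/\alpha}$ branch (the $e^{d/\beta}$--type factor arising from $e^{\alpha d/\beta - \alpha d/p}\le e^{\alpha d/\beta}$). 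The symmetric substitution $v=u^\beta$ handles the complementary regime $u(s)\le N(s)$ and produces the $(KD)^{1/\beta}$ branch, and combining the two gives the stated inequality.

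The main obstacle is this two--exponent dichotomy combined with the need for the scaling lemma: without the case split one cannot reduce the right--hand side to a single power of $u$ and hence cannot obtain a linear ODE in $v=u^\gamma$, while without the scaling lemma the trivial monotonicity $N(s)\le N(p)$ inserts an unwanted logarithmic factor $\log(p/\beta)$ that would later destroy the dimension--free form of the concentration corollaries. The remaining ingredients --- regularization of $|f|^{p/2}$, H\"older, the integrating factor and the elementary inequality $(a+b)^{1/\gamma}\le a^{1/\gamma}+b^{1/\gamma}$ for $\gamma\ge 1$ --- are routine.
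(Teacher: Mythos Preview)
Your overall strategy --- differentiate $\|f\|_s$ via the entropy identity, apply $dmLSI$ to $|f|^{s/2}$, bound $\Psi$ pointwise using the growth condition to produce two terms with exponents $\alpha$ and $\beta$, then H\"older and integrate an ODE --- is exactly the paper's approach, and your pointwise bound is the content of the paper's Lemma~\ref{lemma:property-of-Psi}. The gap is in the integration step.

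The dichotomy ``$u(s)\ge N(s)$ along the path'' versus ``$u(s)\le N(s)$'' is not exhaustive: both $u$ and $N$ are non-decreasing in $s$, and $u(s)-N(s)$ may change sign, so you cannot commit to a single substitution $v=u^\alpha$ or $v=u^\beta$ globally. The paper avoids this by comparing the \emph{monotone} quantity $x(t)=\|f\|_t^2/\bn|\nabla f|_{\Psi_p}\bn_p^2$ (denominator fixed at the target $p$) to a \emph{fixed} threshold $\tfrac{1}{(\alpha-1)^2}\bigl(M^{2/\alpha}\lor M^{2/\beta}\bigr)$, $M=KD$. Above this threshold \emph{both} inequalities $Mx^{1-\alpha/2}\le M^{1/\alpha}x^{1/2}$ and $Mx^{1-\beta/2}\le M^{1/\beta}x^{1/2}$ hold simultaneously, so a \emph{single} substitution $y=x^{1/2}$ linearizes the ODE; monotonicity of $x$ forces at most one crossing $t_0$, giving a clean three-case split. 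This is also how $(KD)^{1/\alpha}$ and $(KD)^{1/\beta}$ enter on an equal footing rather than from two separate regimes.

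A secondary point: even if your case split were repaired, solving $v'-\tfrac{\alpha d}{s^2}v\le A(s)$ with the integrating factor leaves a factor $e^{d/\beta}$ on the gradient term as well, whereas the theorem has no defect-dependent constant there. The paper removes it by a further split on the crossing time: if $t_0\ge dp/(d+p)$ then $e^{d(1/t_0-1/p)}\le e$; if $t_0$ is small one invokes the log-convexity Lemma~\ref{le:moment_comparison} to push the factor onto $\|f\|_\beta$, yielding the $e^{2d/\beta}$. Finally, your scaling lemma $|x|_{\Psi_s}\le K^{1/\alpha}(s/p)^{1/\alpha^*}|x|_{\Psi_p}$ is correct but an unnecessary detour: the paper gets the integrable coefficients $t^{\alpha-2}p^{1-\alpha}$ and $t^{\beta-2}p^{1-\beta}$ directly by writing $\Psi(\tfrac{t}{2}\,\cdot)=p\,\Psi_p(\tfrac{t}{2p}\,\cdot)$ and applying the pointwise bound with the target norm $|\cdot|_{\Psi_p}$ from the outset, so the factor $t/p$ inside supplies the needed powers without any auxiliary comparison of Orlicz norms.
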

\phantom{aaaa}
\medskip
Let us remark that for any $p \ge \beta$ and any $q \in (0,\beta)$ one can actually obtain
\begin{align}\label{eq:mSobolev_smaller_moment}
\|f\|_p \le 2^{\frac{(p-q)\beta}{(p-\beta)q}} e^{2d\frac{p-q}{(p-\beta)q}} \|f\|_q +
   \frac{2e}{\alpha-1}\big((KD)^{1/\alpha} \lor (KD)^{1/\beta}\big)\bn |\nabla f|_{\Psi_p}\bn_p.
\end{align}
This inequality is a simple consequence of the well-known Lemma~\ref{le:moment_comparison} stated in Section~\ref{sec:proofs}.
Note that the constant $2^{\frac{(p-q)\beta}{(p-\beta)q}} e^{2d\frac{p-q}{(p-\beta)q}}$ obtained with the Lemma \ref{le:moment_comparison} explodes when $q \to 0$ or $p \to \beta$.
We do not know if under the assumption of the above theorem, one can prove that for all $p \ge 2$,
\begin{displaymath}
\|f\|_p \le C(D,\alpha,\beta)\Big(\|f\|_2 + \Big\| |\nabla f|_{\Psi_p}\Big\|_p\Big).
\end{displaymath}
Fortunately for the concentration of measure inequalities, for fixed $\alpha,\beta$, it is enough to control the growth of $\|f - \E f\|_p$ for $p > \beta$. Such a bound will be obtained in the non-defective case. It would be interesting to know if one can obtain meaningful Sobolev inequalities for the case $\beta = \infty$, which corresponds to the Bobkov-Ledoux inequality (satisfied e.g. for the product exponential distribution). In \cite{Nonlipschitz} it has been conjectured that in this case (for $d = 0$),
\begin{displaymath}
\|f - \E f\|_p \le C_D \Big(\sqrt{p}\bn |\nabla f|_2\bn + p\bn |\nabla f|_\infty\bn_p\Big)
\end{displaymath}
and a weaker inequality was proved, with the second term on the right hand side replaced by $p\bn |\nabla f|_\infty\bn_\infty$.

On the other hand, the reason for excluding the case $\alpha=1$ is made clear by the following proposition and the example below.

\begin{prop}\label{prop:alpha_equal_1}
Assume that $\Psi \colon \R^n \to \R$ satisfies the condition (C) and for some $\beta \ge 2$ and $K \ge 1$,
\begin{displaymath}
\forall_{x \in \R^n \setminus\{0\}} \ \forall_{t \ge 1} \quad \frac{\Psi(tx)}{\Psi(x)} \le K t^\beta.
\end{displaymath}
Let $\mu$ be a probability measure on $\R^n$ satisfying $dmLSI(\Psi,D,d)$. Then for all locally Lipschitz functions $f\colon \R^n \to \R$ and all $p \ge \beta$,
\begin{align}\label{ineq:moments-estimate-alpha-1}
\|f\|_p \le e^{2d/\beta} \|f\|_\beta + 6\log(p)\big(D \lor (KD)^{1/\beta}\big) \bn |\nabla f|_{\Psi_p}\bn_p.
\end{align}
\end{prop}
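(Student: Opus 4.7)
The plan is to revisit the Aida--Stroock moment iteration used to prove Theorem~\ref{thm:Sobolev}, in the degenerate case where no strict super-linear lower growth bound $\Psi(tx)\ge K^{-1}t^{\alpha}\Psi(x)$ with $\alpha>1$ is assumed. Note that condition~(C5) always yields $\Psi(tx)\ge t\Psi(x)$ for $t\ge 1$, so the hypothesis of Proposition~\ref{prop:alpha_equal_1} is equivalent to~\eqref{ineq:growth-condition} at the limiting case $\alpha=1$, with lower-bound constant $1$ (not $K$). In Theorem~\ref{thm:Sobolev}, the prefactor $\tfrac{2e}{\alpha-1}$ arises from a geometric sum whose ratio depends on $\alpha-1$; at $\alpha=1$ this sum degenerates into an arithmetic one, of length proportional to $\log p$, which is exactly what produces the $\log p$ factor in~\eqref{ineq:moments-estimate-alpha-1}.

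Concretely, I would apply $dmLSI(\Psi,D,d)$ to $|f|^{p/2}$ for $p\ge\beta$, after the standard approximation ensuring positivity and boundedness. Using $\nabla |f|^{p/2}/|f|^{p/2}=\tfrac{p}{2}\nabla|f|/|f|$, the energy term becomes $\E_\mu\Psi\!\bigl(\tfrac{p}{2}\nabla|f|/|f|\bigr)|f|^p$. I would split this expectation by comparing the value of $\Psi\bigl(\tfrac{p}{2}\nabla|f|/|f|\bigr)$ to $p$ (the threshold defining the Orlicz-type norm $|\cdot|_{\Psi_p}$) and apply the upper growth bound ($\beta$, $K$) on the region where this value is large and the trivial lower growth bound (exponent~$1$, constant~$1$ from~(C5)) on the region where it is small. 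This converts the entropy inequality into a recursive bound of the form $\|f\|_p^p\le (1+Cd/p)\|f\|_{p/\lambda}^p + C'(D,K,\beta)\bn|\nabla f|_{\Psi_p}\bn_p^p$ for some $\lambda>1$. Iterating along a dyadic grid $p_k=\beta\cdot 2^k$ until $p_k$ first exceeds $p$, which takes $O(\log p)$ steps, and invoking the monotonicity~\eqref{ineq:monotonicity} of the $\Psi_q$-norms in $q$ to replace each intermediate $\bn|\nabla f|_{\Psi_{p_k}}\bn_{p_k}$ by $\bn|\nabla f|_{\Psi_p}\bn_p$, one obtains~\eqref{ineq:moments-estimate-alpha-1}. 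The defective contribution $d\E_\mu f^2$ telescopes into the $e^{2d/\beta}$ prefactor multiplying $\|f\|_\beta$, exactly as in Theorem~\ref{thm:Sobolev}.

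The main obstacle is the bookkeeping of constants. One cannot simply quote Theorem~\ref{thm:Sobolev} with $\alpha\searrow 1$, because in the limit the prefactor $(KD)^{1/\alpha}$ would tend to $KD$ rather than $D$. Obtaining the cleaner constant $D\vee(KD)^{1/\beta}$ requires genuinely exploiting that the lower bound $\Psi(tx)\ge t\Psi(x)$ comes from~(C5) with constant~$1$, independently of~$K$: in the split above only the large-gradient regime carries the factor $K$ (through $(KD)^{1/\beta}$), while the small-gradient regime contributes only $D\log p$. Tuning the split parameter and the dyadic ratio $\lambda$ to arrive at the specific numerical constant $6$ is then a routine optimization.
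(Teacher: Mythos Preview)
Your high-level intuition is correct: the $\log p$ factor arises because the term that produced a convergent (in $p$) contribution when $\alpha>1$ now gives $\int_\beta^p t^{-1}\,dt\sim\log p$, and you are right that the $\alpha=1$ lower bound comes from (C5) with constant~$1$, which is why the final constant involves $D$ rather than $KD$. This matches the paper's choice of threshold.

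However, your proposed mechanism differs from the paper's and, as written, has a gap. You claim that the split of the energy term ``converts the entropy inequality into a recursive bound of the form $\|f\|_p^p\le (1+Cd/p)\|f\|_{p/\lambda}^p + C'\bn|\nabla f|_{\Psi_p}\bn_p^p$''. But bounding $\Ent f^p$ does not directly yield an \emph{additive} comparison of $\|f\|_p^p$ with $\|f\|_{p/\lambda}^p$; the natural link between entropy and moment growth is the differential identity $\frac{d}{dt}(\E f^t)^{2/t}=\frac{2}{t^2}(\E f^t)^{2/t-1}\Ent f^t$, which is multiplicative/differential. To get your additive recursion with $\lambda=2$ you would need something like $\|f\|_p^p-\|f\|_{p/2}^p=\Var(f^{p/2})\le C\,\Ent f^p$, and you do not justify such a step. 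A discrete scheme along these lines can be made to work, but it is not automatic, and recovering the precise constant~$6$ this way is unlikely to be ``routine''.

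The paper's route is shorter and stays entirely within the framework of Theorem~\ref{thm:Sobolev}: one keeps the continuous differential inequality for $x(t)=\|f\|_t^2/\bn|\nabla f|_{\Psi_p}\bn_p^2$ and simply replaces the splitting threshold $\frac{1}{(\alpha-1)^2}\big(M^{2/\alpha}\vee M^{2/\beta}\big)$, which blows up as $\alpha\to1$, by the $p$-dependent value $(D^2\vee(KD)^{2/\beta})(\log p)^2$. The three-case analysis of Theorem~\ref{thm:Sobolev} then goes through verbatim; in Case~2/3 the $a(t)=t^{-1}$ term integrates to $D\log p$, and the threshold in Case~1 contributes the same order, which gives the factor $6\log p$ after collecting constants. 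So rather than setting up a new dyadic iteration, it suffices to rerun the existing ODE argument with this single modification.
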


\paragraph{\bf Example:} Let $\nu$ be a probability measure on $\R$ with the distribution function
\begin{equation}
\label{distribution-function-nu}
  F_\nu(x) = \begin{cases}
    \frac12 e^{-e^{-x}+1}, & \text{for $x < 0$,} \\
    1-\frac12 e^{-e^x+1}, & \text{for $x \ge 0$.}
  \end{cases}
\end{equation}
\begin{prop}\label{prop:mLSI-for-nu}
The measure $\nu$ defined by \eqref{distribution-function-nu} satisfies $mLSI(\Psi, 2)$ with $\Psi(x) = |x|$. Also, for every locally Lipschitz function $f \colon \R \to \R$ and $p > 1$,
\begin{equation}
\label{ineq:moments-estimate-for-nu}
\|f\|_{L^p(\nu)} \le \|f\|_{L^1(\nu)} + \log (p) \|f'\|_{L^p(\nu)}.
\end{equation}
Moreover, if $f(x) = x$, then for $p \ge 1$, $\|f\|_{L^p(\nu)} \ge (2e)^{-1}\log(p)$.
\end{prop}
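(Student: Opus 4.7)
The proposition has three independent assertions, and I would prove them in order: (1) the modified log-Sobolev inequality, (2) the moment bound, and (3) the lower bound on $\|x\|_{L^p(\nu)}$. The mLSI in (1) is the technical core; (2) is a clean Herbst-type consequence of it; (3) is an explicit computation.

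\emph{Part 1 (mLSI).} Setting $g=f^2$, the inequality $mLSI(|x|,2)$ is equivalent to the $L^1$-logarithmic Sobolev inequality $\Ent_\nu g\le \E_\nu |g'|$ for positive, bounded, locally Lipschitz $g$. The measure $\nu$ is strongly log-concave (the potential $V(x)=e^{|x|}-|x|-1$ has $V''(x)\ge 1$), so by a theorem of Bobkov the isoperimetric extremal sets in one dimension are half-lines; it therefore suffices to verify
\[
\nu((a,\infty))\log\frac{1}{\nu((a,\infty))} \le f_\nu(a) \qquad (a\ge 0),
\]
(the case $a\le 0$ follows by symmetry). Plugging in $\nu((a,\infty))=\tfrac12 e^{1-e^a}$ and $f_\nu(a)=\tfrac12 e^{1-e^a+a}$, the ratio of the two sides simplifies to $(e^a-1+\log 2)/e^a\in[\log 2, 1]$, so the isoperimetric bound holds with constant~$1$. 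Combining it with the coarea formula $\E_\nu|g'|=\int_0^\infty \nu^+(\partial\{g>t\})\,dt$ via the Bobkov--Houdr\'e type equivalence between $L^1$-Sobolev and isoperimetric inequalities yields $\Ent_\nu g\le \E_\nu |g'|$, hence $mLSI(|x|,2)$.

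\emph{Part 2 (moment estimate).} Apply the mLSI from Part 1 to $f^{\lambda/2}$ for $\lambda\ge 1$ (for now, $f>0$ bounded and locally Lipschitz). Since $\Psi(x)=|x|$ and $(f^{\lambda/2})'/f^{\lambda/2}=(\lambda/2)(f'/f)$, this gives $\Ent_\nu f^\lambda\le \lambda\,\E_\nu|f'|f^{\lambda-1}$. By H\"older's inequality $\E_\nu|f'|f^{\lambda-1}\le \|f'\|_\lambda\,\|f\|_\lambda^{\lambda-1}$. Writing $H(\lambda)=\log\E_\nu f^\lambda$, one has $\Ent_\nu f^\lambda=e^{H(\lambda)}(\lambda H'(\lambda)-H(\lambda))$, so
\[
\lambda H'(\lambda)-H(\lambda) \le \lambda\,\|f'\|_\lambda/\|f\|_\lambda.
\]
Since $(H/\lambda)'=(\lambda H'-H)/\lambda^2$ and $\|f\|_\lambda=e^{H/\lambda}$, this differential inequality rewrites as $\frac{d}{d\lambda}\|f\|_\lambda\le \|f'\|_\lambda/\lambda$. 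Integrating from~$1$ to~$p$ and using the monotonicity $\|f'\|_\lambda\le\|f'\|_p$ for $\lambda\le p$,
\[
\|f\|_p-\|f\|_1 \le \|f'\|_p\int_1^p \frac{d\lambda}{\lambda}=\log(p)\,\|f'\|_p.
\]
For a general locally Lipschitz $f$, apply the above to $|f|+\varepsilon$ (which is positive and locally Lipschitz with $|(|f|+\varepsilon)'|=|f'|$ a.e.) and let $\varepsilon\to 0$; standard truncation handles unbounded $f$.

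\emph{Part 3 and main obstacle.} The lower bound in Part 3 is immediate: from the explicit $F_\nu$ one checks $\Pr(|X|>t)=e^{1-e^t}=\Pr(\log(1+E)>t)$, so $|X|$ has the same law as $\log(1+E)$ for $E\sim\mathrm{Exp}(1)$. Hence
\[
\E|X|^p=\E(\log(1+E))^p\ge (\log(1+p))^p\,\Pr(E\ge p)=(\log(1+p))^p e^{-p},
\]
which gives $\|X\|_{L^p(\nu)}\ge \log(1+p)/e\ge \log(p)/(2e)$, using $(1+p)^2\ge p$. The delicate step of the proof is Part 1, specifically the passage from the easily verified pointwise isoperimetric estimate to the $L^1$-log-Sobolev inequality with the sharp constant $D=2$; this requires invoking both the 1D isoperimetric extremality of half-lines for log-concave measures and the equivalence of $L^1$-Sobolev inequalities with isoperimetric ones, while tracking constants carefully. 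Once Part 1 is in place, the tailored Herbst argument in Part 2 produces the sharp $\log p$ rate with constant exactly~$1$, and Part 3 is a one-line computation.
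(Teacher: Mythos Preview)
Your proposal is correct and follows essentially the same route as the paper: reduce the $mLSI$ to an $L^1$ log-Sobolev inequality, prove that via Bobkov's half-line reduction and the coarea formula, then derive the moment bound by a Herbst-type differential inequality integrated from $1$ to $p$. The only genuine difference is in Part~3: the paper estimates $\|x\|_{L^p(\nu)}$ by a direct lower bound on $\int_{\frac12\log p}^{\log p} x^{p-1}e^{-e^x}\,dx$, whereas your observation that $|X|\stackrel{d}{=}\log(1+E)$ with $E$ exponential gives a cleaner one-line argument via $\Pr(E\ge p)=e^{-p}$.
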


The `moreover' part of the above proposition shows in particular that for $p\to \infty$, the $\log (p)$ factor in~\eqref{ineq:moments-estimate-alpha-1} or \eqref{ineq:moments-estimate-for-nu} cannot be improved.

\paragraph{\bf Further examples:}
\begin{enumerate}
\item If $\Psi(x) = |x|^2$ we are in the setting of the classical defective logarithmic Sobolev inequality. A result by Aida-Stroock \cite{MR1258492} says that in this case for $p \ge 2$,
\begin{align}\label{eq:Aida_stroock}
\|f\|_p^2 \le e^{2d/p^\ast} \Big(\|f\|_{2}^2 + D(p-2)\bn |\nabla f|\bn_p^2\Big).
\end{align}
On the other hand, Theorem~\ref{thm:Sobolev}, specialized to this case, asserts that for $p \ge 2$,
\begin{align}\label{eq:classical}
\|f\|_p  \le e^{d}\|f\|_2 + 2e\sqrt{D} \sqrt{p} \bn |\nabla f|\bn_p.
\end{align}
Let us note that the constant in front of the term involving $\nabla f$ in our inequality does not depend on $d$, which is not the case in \eqref{eq:Aida_stroock}. Therefore, even though the Aida-Stroock bound may behave in a better way for $p$ close to 2, our estimate (via Chebyshev's inequality optimized over $p$) shows that the large deviation behaviour of functions with polynomial growth of moments of $\nabla f$ can be controlled independently of $d$, which does not seem to follow from \eqref{eq:Aida_stroock}. For instance, if $\|f\|_2 < \infty$ and $\bn |\nabla f|\bn_p \le A p^\gamma$ for some $\gamma \ge 0$, then we obtain
\begin{displaymath}
\limsup_{t \to \infty} \frac{1}{t^{2/(1+2\gamma)}}\log \p(|f| \ge t) \le - c_{A,D}
\end{displaymath}
for some (explicit) constant $c_{A,D}>0$.

We remark that an improvement of the Aida-Stroock result for Lipschitz functions was obtained by Rothaus in \cite{MR1452824}.

\item Consider now $\Psi(x) = \sum_{i=1}^n (|x_i|^2\ind{|x_i|\le 1} + |x_i|^r \ind{|x_i|>1})$ for some $r \in (1,\infty)$, which corresponds to the modified logarithmic Sobolev inequality introduced in \cite{MR2198019} and \cite{MR2438906} for $r \ge 2$ and in \cite{MR2487856} for $r < 2$. In the former case, the inequalities of Theorem \ref{thm:Sobolev} read as
    \begin{displaymath}
    \|f\|_p \le e^{2d/r}\|f\|_r + 2e (D^{1/2}\vee D^{1/r}) \Big(\sqrt{p}\bn |\nabla f|_2\bn_p + p^{1/r^\ast}\bn |\nabla f|_r\bn_p\Big)
    \end{displaymath}
    for $p \ge r$ (the assumption ($G_{K,\alpha,\beta}$) is satisfied with $\alpha = 2$, $\beta = r$ and $K = 1$). We remark that if in addition the underlying measure $\mu$ satisfies the Poincar\'e inequality, we can replace $\|f\|_r$ by $\|f\|_2$ and obtain an inequality for any $p \ge 2$ (with altered constants).

    If $r \in (1,2)$, one obtains
    \begin{align}\label{eq:Weibull_1}
    \|f\|_p \le e^{d}\|f\|_2 + \frac{2e}{r-1}(D^{1/r}\vee D^{1/2})\Big(p^{1/r^\ast}\bn |(\partial_i^\ast f)_{i=1}^{\lfloor p\rfloor}|_r\bn_p + p^{1/2}\bn |(\partial_i^\ast f)_{i=\lfloor p\rfloor + 1}^n|_2\bn_p\Big)\nonumber\\
    \end{align}
    for $p \ge 2$, where $\partial_1^\ast f(x),\ldots,\partial_n^\ast f(x)$ is the non-increasing rearrangement of the sequence $|\frac{\partial f(x)}{\partial x_1}|,\ldots,|\frac{\partial f(x)}{\partial x_n}|$.

    Note that
    \begin{displaymath}
    p^{1/r^\ast}\bn |(\partial_i^\ast f)_{i=1}^{\lfloor p\rfloor}|_r\bn_p + p^{1/2}\bn |(\partial_i^\ast f)_{i=\lfloor p\rfloor + 1}^n|_2\bn_p \le C p^{1/2}\bn |\nabla f|_2 \bn_p,
    \end{displaymath}
    so \eqref{eq:Weibull_1} is stronger then the bound \eqref{eq:classical} which has been derived from the classical logarithmic Sobolev inequality. Clearly to take advantage of the improvement one needs some additional information about the function $f$.

\end{enumerate}

\subsubsection{The non-defective case \label{sec:non-defective}}
Let us now pass to our second result, describing the moment estimates implied by modified LSI without defect, which will be later applied to obtain concentration bounds. Recall the definition of the inequality $mLSI(\Psi,D)$ given in formula \eqref{eq:mLSI}. The proofs of results presented in this section are deferred to Section \ref{sec:non-defective_proofs}.

Denote
\[
  L(K,D,\alpha,\beta) := \frac{1}{\alpha-1}(KD)^{1/\beta} + \Big(\frac{1}{\alpha-1} + \beta^{1/\alpha}\Big)(KD)^{1/\alpha}.
\]
\begin{theorem}\label{thm:Sobolev_centered}
Assume that $\Psi \colon \R^n \to \R$ satisfies the condition (C) and \eqref{ineq:growth-condition}
for some $1<\alpha \le 2\le \beta < \infty$.
Let $\mu$ be a probability measure on $\R^n$ satisfying $mLSI(\Psi,D)$. Then for all integrable (w.r.t. $\mu$) and locally Lipschitz functions $f\colon \R^n \to \R$ and all $p \ge \beta$,
\begin{align}\label{eq:mSobolev_centered}
\|f-\E_\mu f\|_p \le C L(K,D,\alpha,\beta) \bn |\nabla f|_{\Psi_p}\bn_p.
\end{align}
\end{theorem}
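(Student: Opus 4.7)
The natural strategy is to combine the defective estimate from Theorem~\ref{thm:Sobolev} with an independently proven Poincar\'e-type bound in $L^\beta$. Applying Theorem~\ref{thm:Sobolev} (with $d = 0$) to the locally Lipschitz function $g = f - \E_\mu f$ gives, for $p \ge \beta$,
\[
\|f - \E_\mu f\|_p \le \|f - \E_\mu f\|_\beta + \frac{2e}{\alpha-1}\bigl((KD)^{1/\alpha} \vee (KD)^{1/\beta}\bigr)\bigl\||\nabla f|_{\Psi_p}\bigr\|_p.
\]
Using $a \vee b \le a + b$, the second term above already accounts for two of the three contributions inside $L(K,D,\alpha,\beta)$. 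Furthermore, the monotonicity~\eqref{ineq:monotonicity} and Lyapunov's inequality give $\||\nabla f|_{\Psi_\beta}\|_\beta \le \||\nabla f|_{\Psi_p}\|_p$, so it suffices to establish the $L^\beta$ Poincar\'e-type inequality
\begin{equation}\label{eq:Poincare-beta-plan}
\|f - \E_\mu f\|_\beta \le C\,\beta^{1/\alpha}(KD)^{1/\alpha}\bigl\||\nabla f|_{\Psi_\beta}\bigr\|_\beta
\end{equation}
for some absolute constant $C$, which will supply the missing $\beta^{1/\alpha}(KD)^{1/\alpha}$ piece of $L$.

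The main obstacle is~\eqref{eq:Poincare-beta-plan}: the classical approach of Rothaus fails here because applying $mLSI(\Psi, D)$ to $f = 1 + \varepsilon g$ and sending $\varepsilon \to 0$ produces an RHS of order $\varepsilon^\alpha$ against an LHS of order $\varepsilon^2$, which is inconclusive when $\alpha < 2$. My plan for~\eqref{eq:Poincare-beta-plan} is a Herbst-type argument conducted directly on the level of the log-Laplace transform: applying $mLSI(\Psi, D)$ to $e^{tf/2}$ for a test function $f$ with $|\nabla f|_{\Psi_\beta} \le 1$ a.s., and using the growth condition~\eqref{ineq:growth-condition} to estimate $\Psi((t/2)\nabla f)$ via
\[
\Psi\bigl((t/2)\nabla f\bigr) = \Psi\bigl(\tfrac{t}{2\beta}\cdot \beta\nabla f\bigr) \le K\bigl(\tfrac{t}{2\beta}\bigr)^\alpha \Psi(\beta\nabla f) \le K\beta \bigl(\tfrac{t}{2\beta}\bigr)^\alpha
\]
for $t \in (0, 2\beta]$, then solving the resulting Herbst-type ODE. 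I expect to obtain a sub-$\alpha^*$-exponential tail bound of the form $\mu(|f-\E_\mu f|\ge s)\le 2\exp(-c_{K,D,\alpha}\,s^{\alpha^*})$ (with $\alpha^* = \alpha/(\alpha-1)$) valid in the relevant range of $s$. A careful bookkeeping of the scale at which the tail becomes effective, followed by direct integration of the tail, should yield~\eqref{eq:Poincare-beta-plan} with constant of order $\beta^{1/\alpha}(KD)^{1/\alpha}$ for functions of uniformly bounded $\Psi_\beta$-gradient.

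The remaining step is a truncation/approximation argument to pass from the class with uniformly bounded $\Psi_\beta$-gradient to a general locally Lipschitz $f$ with $\||\nabla f|_{\Psi_\beta}\|_\beta < \infty$. A level-set decomposition, akin to the one used by Bobkov--G\"otze in their characterization of LSI measures, could be applied here: one decomposes $f$ into pieces of bounded gradient, applies the bounded-gradient version of~\eqref{eq:Poincare-beta-plan} to each piece, and reassembles via Minkowski's inequality. Combining the resulting Poincar\'e-type estimate with the reduction from Theorem~\ref{thm:Sobolev} above produces the full inequality~\eqref{eq:mSobolev_centered} with the asserted constant $L(K,D,\alpha,\beta)$.
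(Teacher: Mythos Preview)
Your overall reduction is exactly what the paper does: apply Theorem~\ref{thm:Sobolev} with $d=0$ to $f-\E_\mu f$, and then control $\|f-\E_\mu f\|_\beta$ by an $L^\beta$-Poincar\'e inequality (the paper's Proposition~\ref{prop:Poincare}), using~\eqref{ineq:monotonicity} to absorb the $\Psi_\beta$-term into the $\Psi_p$-term. The divergence is entirely in how the Poincar\'e inequality~\eqref{eq:Poincare-beta-plan} is obtained.

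The paper does \emph{not} go through the Herbst argument. Instead it follows Bobkov--Zegarli\'nski: apply $mLSI(\Psi,D)$ to $g^{\beta/2}$, use Lemma~\ref{lemma:property-of-Psi} and Young's inequality to reach an estimate of the form $\Ent g^\beta \le C\,\E|\nabla g|_{\Psi_\beta}^\beta + \tfrac12\E g^\beta$, combine this with the elementary bound $\Ent h \ge \log(1/\mu(\{h>0\}))\E h$, and apply the result to $g=f_+$ and $g=f_-$ with $f$ centered at its median. This gives~\eqref{eq:Poincare} directly for \emph{all} locally Lipschitz $f$, with no intermediate restriction to bounded gradients.

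Your Herbst step for functions with $|\nabla f|_{\Psi_\beta}\le 1$ a.s.\ is sound (and, incidentally, the constant it yields does not actually carry the $\beta^{1/\alpha}$ factor you anticipate). The genuine gap is the passage from this uniform-gradient class to general $f$ with $\bigl\||\nabla f|_{\Psi_\beta}\bigr\|_\beta<\infty$. A tail bound for functions with \emph{a.s.}\ bounded gradient does not by itself imply an $L^\beta$-Sobolev inequality with the $L^\beta$-norm of the gradient on the right-hand side: the latter is a strictly stronger statement, and there is no general ``level-set decomposition'' that bridges the two. Writing $f=\sum_k f_k$ with each $|\nabla f_k|_{\Psi_\beta}$ bounded would require truncating $\nabla f$ on level sets and integrating back, which fails in $\R^n$ since truncated gradient fields are not gradients. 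The Bobkov--G\"otze characterisation you invoke concerns the equivalence between LSI and sub-Gaussian Lipschitz concentration on $\R$; it does not supply a mechanism to upgrade Lipschitz concentration to a Poincar\'e-type inequality with integrated gradient. Without an additional structural hypothesis (log-concavity, as in E.~Milman's work) this implication is simply not available, so your plan as stated does not close. The entropy/median route of Proposition~\ref{prop:Poincare} avoids the issue entirely by working from the outset with arbitrary $f$.
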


We note that, as is easy to see by truncation arguments, if the right-hand side of \eqref{eq:mSobolev_centered} is finite, then the function $f$ is $\mu$-integrable (in fact the $p$-th moment of $f$ is finite), so the integrability assumption is introduced in the above theorem just for formal reasons.

The advantage of \eqref{eq:mSobolev_centered} with respect to \eqref{eq:mSobolev} is that it provides estimates of central moments of $f$ in terms of norms of the gradient, without further dependence on any norms of $f$. This allows to derive concentration property for $f$ based on the regularity of the gradient and as a consequence provides also concentration bounds at the level of enlargements of sets.

Theorem \ref{thm:Sobolev_centered} is derived from Theorem \ref{thm:Sobolev} by means of Proposition \ref{prop:Poincare} below, which allows to handle the central moment of order $\beta$.

\begin{prop}\label{prop:Poincare}
Under the assumptions of Theorem \ref{thm:Sobolev_centered}, for every integrable (w.r.t. $\mu$) and locally Lipschitz function $f \colon \R^n \to \R$,
\begin{align}\label{eq:Poincare}
\|f - \E_\mu f\|_\beta \le C \Big( (KD)^{1/\beta} + (KD\beta)^{1/\alpha} \Big) \bn |\nabla f|_{\Psi_\beta}\bn_\beta.
\end{align}
\end{prop}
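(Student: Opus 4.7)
The plan is to derive the Poincaré-type inequality of Proposition \ref{prop:Poincare} directly from $mLSI(\Psi,D)$ by applying it to a non-linear test function and exploiting both sides of the growth condition \eqref{ineq:growth-condition}. I expect the two summands in the constant $C\bigl((KD)^{1/\beta} + (KD\beta)^{1/\alpha}\bigr)$ to arise from the two exponents $\alpha,\beta$ in \eqref{ineq:growth-condition} corresponding to different scaling regimes in a pointwise bound on $\Psi$.

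By standard approximation and translation invariance (both sides are invariant under $f\mapsto f+c$), we may assume $f$ is bounded, smooth, and $\E_\mu f = 0$; it then suffices to bound $\|f\|_\beta$ by $C'\bn|\nabla f|_{\Psi_\beta}\bn_\beta$. Fix $M \ge \|f\|_\infty$ so that $M+f > 0$, and apply mLSI to $g = (M+f)^{\beta/2}$. Since $\nabla g/g = (\beta/2)\nabla f/(M+f)$ and $g^2 = (M+f)^\beta$, the mLSI yields
\begin{displaymath}
\Ent_\mu (M+f)^\beta \le D\,\E_\mu \Psi\bigl((\beta/2)\nabla f/(M+f)\bigr)(M+f)^\beta.
\end{displaymath}
The upper estimate $\Psi((\beta/2)y) \le K(\beta/2)^\beta \Psi(y)$, together with the definition of the $\Psi_\beta$-norm, which gives $\Psi(\beta y/|y|_{\Psi_\beta}) \le \beta$, allows one to split the integrand pointwise according to whether the scaling factor $|\nabla f|_{\Psi_\beta}/(\beta(M+f))$ is $\ge 1$ or $\le 1$, invoking the $\beta$- or $\alpha$-side of \eqref{ineq:growth-condition} respectively. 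After H\"older's inequality, the right-hand side becomes a combination of $\|M+f\|_\beta^{\beta-\alpha}\bn|\nabla f|_{\Psi_\beta}\bn_\beta^\alpha$ and $\bn|\nabla f|_{\Psi_\beta}\bn_\beta^\beta$ with constants depending on $K,D,\alpha,\beta$.

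For the lower bound on $\Ent_\mu(M+f)^\beta$, I would use $\E_\mu f = 0$ together with a Rothaus-type argument to conclude that this entropy is bounded below by a positive multiple of $\|f\|_\beta^\beta$, up to extra terms controlled by the shift $M$. Combining the two bounds and optimizing over $M\sim \|f\|_\beta$ should then produce the Poincar\'e-type estimate, with the $\alpha$-regime contributing the $(KD\beta)^{1/\alpha}$ summand and the $\beta$-regime contributing $(KD)^{1/\beta}$.

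The main obstacle is the case $1<\alpha<2$: the classical linearization $f\mapsto 1+\epsilon f$ with $\epsilon\to 0$ fails here, since the entropy scales as $\epsilon^2\Var f$ while the right-hand side of mLSI scales as $\epsilon^\alpha$, giving only the trivial conclusion $0 \le CD\,\E_\mu\Psi(\nabla f)$. Hence a non-asymptotic argument with a finite shift $M$ is needed, and the delicate step is the lower bound on $\Ent_\mu(M+f)^\beta$ for general $\beta > 2$, which amounts to a higher-moment analogue of the Rothaus lemma. I expect that carrying out this Rothaus-type lower bound with sharp constants (so that the final dependence on $\beta$ matches $(KD\beta)^{1/\alpha}$) will be the technical heart of the argument.
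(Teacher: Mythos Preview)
Your upper-bound step (apply $mLSI$ to $g^{\beta/2}$, then use the growth condition to land on a combination of $\E g^{\beta-\alpha}|\nabla g|_{\Psi_\beta}^\alpha$ and $\E|\nabla g|_{\Psi_\beta}^\beta$) is essentially what the paper does; that part is fine. The genuine gap is on the lower-bound side, and it is exactly the step you flag as ``the technical heart''.

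First, there is a tension you do not resolve: you require $M\ge\|f\|_\infty$ for $(M+f)^{\beta/2}$ to be an admissible test function, but then propose to optimise at $M\sim\|f\|_\beta$. For a bounded (but not a priori controlled) $f$ these are incompatible, and truncating $f$ reintroduces the very moment you are trying to bound. Second, and more seriously, the Rothaus-type lower bound $\Ent_\mu(M+f)^\beta \ge c\|f\|_\beta^\beta$ (even ``up to terms controlled by $M$'') is not a known lemma for $\beta>2$, and a quick linearisation check shows it is delicate: for $M$ large compared to $f$ one has $\Ent_\mu(M+f)^\beta \sim C_\beta M^{\beta-2}\Var_\mu f$, which is of order $\|f\|_\beta^{\beta-2}\|f\|_2^2$ at $M\sim\|f\|_\beta$, not $\|f\|_\beta^\beta$. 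So the envisaged optimisation does not close without substantial extra input.

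The paper sidesteps both issues by replacing the mean with the median and working with the positive and negative parts $f_\pm$. After deriving (via Young's inequality) the one-sided bound
\[
\Ent_\mu g^\beta \le C\big((KD)^{1/\beta}+(KD\beta)^{1/\alpha}\big)^\beta \E_\mu|\nabla g|_{\Psi_\beta}^\beta + \tfrac12\E_\mu g^\beta
\]
for nonnegative locally Lipschitz $g$, it invokes the elementary inequality $\Ent_\mu h \ge \log\bigl(1/\mu(\{h>0\})\bigr)\,\E_\mu h$ (Bobkov--Zegarli\'nski). Applied to $g=f_\pm$ with $Mf=0$, so that $\mu(\{f_\pm>0\})\le 1/2$, this gives $(\log 2 - \tfrac12)\E_\mu f_\pm^\beta$ on the left, and summing yields $\|f-Mf\|_\beta$ directly, with no shift parameter and no Rothaus lemma needed. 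This is the missing idea in your outline.
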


\subsection{Corollaries. Concentration: deviation inequalities and enlargement of sets.\label{sec:concentration}}
We will now explain how moment estimates of Theorem \ref{thm:Sobolev_centered} imply concentration results expressed in terms of non-Euclidean norms of the gradient and non-Euclidean enlargements of measurable sets. The proofs of results from this section are presented in Section \ref{sec:concentration_proofs}.

By Chebyshev's inequality we obtain the following
\begin{cor} \label{cor:Chebyshev}Under the assumptions of Theorem \ref{thm:Sobolev_centered}, for all integrable (w.r.t. $\mu$) and locally Lipschitz functions $f \colon \R^n \to \R$ and any $p \ge \beta$,
\begin{displaymath}
\mu\bigg(|f - \E_\mu f| \ge C L(K,D,\alpha,\beta)
    \bn|\nabla f|_{\Psi_p}\bn_p \bigg) \le e^{-p}.
\end{displaymath}
\end{cor}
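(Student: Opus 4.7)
The statement is a straightforward consequence of Theorem \ref{thm:Sobolev_centered} via Chebyshev's inequality in $L^p$, so no serious obstacle is expected here; the only issue is to book-keep constants so that the threshold can be written with the same symbol $C$ as in the theorem.

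The plan is as follows. First, I would invoke Markov's inequality at the $p$-th moment: for any locally Lipschitz, $\mu$-integrable $f$ and any $\lambda > 0$,
\[
  \mu\bigl(|f - \E_\mu f| \ge \lambda\bigr) \le \lambda^{-p} \, \|f - \E_\mu f\|_p^{\,p}.
\]
Second, I would apply Theorem \ref{thm:Sobolev_centered} (whose hypotheses are exactly those of the corollary) to get, for $p \ge \beta$,
\[
  \|f - \E_\mu f\|_p \le C_0 \, L(K,D,\alpha,\beta) \, \bn|\nabla f|_{\Psi_p}\bn_p,
\]
where $C_0$ is the universal constant produced by that theorem.

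Third, I would specialize the threshold to $\lambda = e \, C_0 \, L(K,D,\alpha,\beta) \, \bn|\nabla f|_{\Psi_p}\bn_p$. Combining the two displays gives
\[
  \mu\Bigl(|f - \E_\mu f| \ge e C_0 L(K,D,\alpha,\beta)\bn|\nabla f|_{\Psi_p}\bn_p\Bigr) \le e^{-p}.
\]
Finally, since the convention adopted at the beginning of the paper is that $C$ denotes a universal constant whose value may change from line to line, one absorbs the factor $e$ into the new $C := e C_0$, recovering the statement of Corollary \ref{cor:Chebyshev}. The case when $\bn|\nabla f|_{\Psi_p}\bn_p = \infty$ is trivial (the bound is vacuous), and the assumed $\mu$-integrability of $f$ ensures that $\E_\mu f$ on the left-hand side is well defined.
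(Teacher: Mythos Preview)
Your proof is correct and follows exactly the approach indicated in the paper: the corollary is introduced there with the single phrase ``By Chebyshev's inequality we obtain the following'', and your argument is a faithful expansion of that one-line appeal, including the absorption of the factor $e$ into the universal constant $C$.
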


The above corollary allows to get concentration bounds if one controls the growth of $g(p) := \bn|\nabla f|_{\Psi_p}\bn_p$, no Lipschitz-type conditions need to be assumed. However, since in the simplest situation one may control the growth of $g(p)$ via a uniform bound on $|\nabla f|_{\Psi_a}$ for some $a>0$ we will now specialize to functions which satisfy such a bound. To formulate the next corollary we will need to introduce the function $\omega_{\Psi}\colon \R_+\to \R_+ \cup \{\infty\}$, defined as
\begin{displaymath}
\omega_\Psi(t) = \sup_{x\in \R^n\setminus\{0\}, \; \Psi(x) \neq \infty} \frac{\Psi(tx)}{\Psi(x)}.
\end{displaymath}
If $\Psi$ satisfies the condition (C) then $\omega_\Psi$ is left-continuous, $\lim_{t \to 0} \omega_\Psi(t)=0$, $\lim_{t\to\infty} \omega_\Psi(t) = \infty$ and $t \mapsto \omega_\Psi(t)/t$ is non-decreasing, so $\omega_\Psi$ is strictly increasing on $\{ \omega_\Psi < \infty\}$. Consider the inverse $\omega_\Psi^{-1} \colon \R_+\to\R_+$ of $\omega_\Psi$, formally defined as
\[
  \omega_\Psi^{-1}(s) = \sup\{ t > 0 \colon \omega_\Psi(t) \le s\}.
\]
This function is continuous and since $t \mapsto \omega_\Psi(t)/t$ is non-decreasing, the function $s \mapsto s/\omega_\Psi^{-1}(s)$ is also continuous and non-decreasing.
If additionally $\Psi$ satisfies~\eqref{ineq:growth-condition} with some $K \ge 1$ and $1 < \alpha \le 2 \le \beta$ then for all $t > 0$,
\begin{align}\label{ineq:bounds-on-omega}
  K^{-1} (t^\alpha \land t^\beta) \le \omega_\Psi(t) \le K (t^\alpha \lor t^\beta)
\end{align}
which implies that
\begin{align}\label{omega-by-t}
  \lim_{t \to 0} \frac{\omega_\Psi(t)}{t} = 0, \qquad \lim_{t\to \infty} \frac{\omega_\Psi(t)}{t} = \infty
\end{align}
and in turn $s/\omega_\Psi^{-1}(s) \to 0$ as $s \to 0$ and $s/\omega_\Psi^{-1}(s) \to \infty$ as $s \to \infty$. Therefore one can define a function $\omega_\Psi^\ast \colon \R_+ \to \R_+$ to be a right-continuous inverse of $s \mapsto \frac{s}{\omega^{-1}_\Psi(s)}$, i.e.
\[
  \omega_\Psi^\ast(t) = \sup\Big\{ s > 0 \colon \frac{s}{\omega^{-1}_\Psi(s)} \le t\Big\}.
\]
Note that $\omega_\Psi^\ast(t)$ is strictly increasing.
We shall use the following observation (quite standard in the theory of Orlicz functions) which shows that the behaviour of the pair of functions $\omega_\Psi$ and $\omega_\Psi^\ast$ is similar to behaviour of conjugate functions:
\begin{lemma}\label{lemma:omega-omega-ast}
Assume that $\Psi$ satisfies the condition (C) and is such that \eqref{omega-by-t} holds. Then for any $t > 0$,
\begin{align}\label{omega-ast-equiv-def}
 \omega_\Psi^\ast(t) = t \sup\Big\{u>0 \colon \frac{\omega_\Psi(u)}{u} \le t\Big\}.
\end{align}
Moreover, if $\lambda(t) = \sup_{y>0}(ty - \omega_\Psi(y))$ is the Legendre transform of $\omega_\Psi$ then for all $t>0$,
\begin{align}\label{omega-ast-and-lambda}
  \lambda(t) \le \omega_\Psi^\ast(t) \le \lambda(2t).
\end{align}
\end{lemma}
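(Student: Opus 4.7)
The plan is to first reformulate $\omega_\Psi^\ast$ as in \eqref{omega-ast-equiv-def}, and then deduce \eqref{omega-ast-and-lambda} directly from this reformulation. Set $\phi(u) := \omega_\Psi(u)/u$ for $u > 0$. Condition (C5) says that $t \mapsto \Psi(tx)/t$ is non-decreasing for every fixed $x$, so $t \mapsto \Psi(tx)/(t\Psi(x))$ is non-decreasing for every $x$ with $\Psi(x) \in (0,\infty)$; taking the supremum over such $x$ shows that $\phi$ is non-decreasing. Coupled with \eqref{omega-by-t}, this guarantees that the generalized right-inverse
\[
  A(t) := \sup\{u > 0 : \phi(u) \le t\}, \qquad t > 0,
\]
is a well-defined, non-decreasing, right-continuous function with values in $(0,\infty)$ (right-continuity is a standard consequence of monotonicity of $\phi$: if $u < \lim_{s\downarrow t} A(s)$ then $\phi(u) \le s$ for every $s > t$, hence $\phi(u) \le t$ and $u \le A(t)$). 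My goal is to identify $\omega_\Psi^\ast(t)$ with $t A(t)$.

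For the inequality $tA(t) \le \omega_\Psi^\ast(t)$, fix any $u$ with $\phi(u) \le t$ and set $s := tu$. Then $\omega_\Psi(u) = u\phi(u) \le s$, so the definition of $\omega_\Psi^{-1}$ yields $\omega_\Psi^{-1}(s) \ge u$, giving $s/\omega_\Psi^{-1}(s) \le s/u = t$ and hence $s = tu \le \omega_\Psi^\ast(t)$; taking the supremum over $u$ proves the claim. The reverse inequality $\omega_\Psi^\ast(t) \le tA(t)$ is the technically delicate step, because the defining supremum $\omega_\Psi^{-1}(s) = \sup\{v : \omega_\Psi(v) \le s\}$ need not itself satisfy $\omega_\Psi(\omega_\Psi^{-1}(s)) \le s$ (we have not assumed left-continuity of $\omega_\Psi$). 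To circumvent this, fix $s > 0$ with $s/\omega_\Psi^{-1}(s) \le t$ and let $u := \omega_\Psi^{-1}(s)$, so that $s \le tu$. For every $v \in (0,u)$ one has $\omega_\Psi(v) \le s \le tu$, i.e.\ $\phi(v) \le tu/v$. Given $\delta > 0$ arbitrarily small, this gives $\phi(v) \le t+\delta$ for $v$ sufficiently close to $u$, hence $v \le A(t+\delta)$; letting $v \to u^-$ and then using right-continuity of $A$ yields $u \le A(t)$, and therefore $s \le tu \le tA(t)$. Taking the supremum over $s$ proves \eqref{omega-ast-equiv-def}.

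Finally, \eqref{omega-ast-and-lambda} follows quickly from the identity just established. For the lower bound $\lambda(t) \le \omega_\Psi^\ast(t)$, observe that if $y > 0$ satisfies $\phi(y) \le t$ then $ty - \omega_\Psi(y) \le ty \le tA(t) = \omega_\Psi^\ast(t)$, while if $\phi(y) > t$ then $ty - \omega_\Psi(y) < 0 \le \omega_\Psi^\ast(t)$; taking the supremum over $y > 0$ gives the bound. For the upper bound $\omega_\Psi^\ast(t) \le \lambda(2t)$, pick any $u$ with $\phi(u) \le t$, so that $\omega_\Psi(u) \le tu$, and estimate
\[
  \lambda(2t) \ge 2tu - \omega_\Psi(u) \ge 2tu - tu = tu;
\]
taking the supremum over such $u$ yields $\lambda(2t) \ge tA(t) = \omega_\Psi^\ast(t)$. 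The main obstacle throughout is the first reformulation, more specifically the direction $\omega_\Psi^\ast(t) \le tA(t)$, where one must compensate for the absence of left-continuity of $\omega_\Psi$ by exploiting right-continuity of the level-set inverse $A$.
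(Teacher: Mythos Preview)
Your proof is correct and follows the same overall strategy as the paper: first establish the identity \eqref{omega-ast-equiv-def}, then read off \eqref{omega-ast-and-lambda} from it. The differences are minor but worth noting.

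For \eqref{omega-ast-equiv-def}, the paper observes that $\omega_\Psi$ is in fact left-continuous under condition (C) (this is stated just before the lemma; it follows from (C4) because a supremum of left-continuous non-decreasing functions is left-continuous). With left-continuity in hand, the paper invokes the standard equivalence $\omega_\Psi^{-1}(y) \ge x \iff \omega_\Psi(x) \le y$ directly, which gives $\omega_\Psi^{-1}(tu) \ge u \iff \omega_\Psi(u) \le tu$ in one line. Your approximation argument via right-continuity of $A$ is perfectly valid, but the extra care you take to ``compensate for the absence of left-continuity'' is unnecessary in this setting.

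For the lower bound $\lambda(t) \le \omega_\Psi^\ast(t)$, the paper picks a maximizer $u$ of the supremum in \eqref{omega-ast-equiv-def} (which exists by left-continuity) and then argues that for $y > u$ one has $t < \omega_\Psi(y)/y \le \omega_\Psi(y)/(y-u)$, hence $ty - \omega_\Psi(y) < tu$. Your case split according to whether $\phi(y) \le t$ or $\phi(y) > t$ is arguably cleaner and does not require existence of a maximizer. The upper bound $\omega_\Psi^\ast(t) \le \lambda(2t)$ is handled identically in both proofs.
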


The role played the function $\omega_\Psi^\ast$ in concentration inequalities is revealed by the following
\begin{cor} \label{cor:Lipschitz_conc}
Under the assumptions of Theorem \ref{thm:Sobolev_centered}, if a locally Lipschitz function $f\colon \R^n \to \R$ satisfies $|\nabla f(x)|_{\Psi_a} \le b$, $\mu$-a.e. for some $a,b > 0$, then for all $t > 0$,
\begin{displaymath}
\mu\Big(|f - \E_\mu f| \ge t\Big) \le  \exp\Big(\beta -a\omega_{\Psi}^\ast\Big(\frac{t}{CL(K,D,\alpha,\beta)b}\Big)\Big).
\end{displaymath}
\end{cor}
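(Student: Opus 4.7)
The plan is to deduce the corollary from the moment bound of Corollary~\ref{cor:Chebyshev} by translating the pointwise hypothesis $|\nabla f(x)|_{\Psi_a}\le b$ into a bound on $\bn|\nabla f|_{\Psi_p}\bn_p$ valid for every $p\ge\beta$, and then optimizing in $p$.

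\textbf{Step 1: comparison of Orlicz norms.} I would first prove
\[
|x|_{\Psi_p}\le \frac{p/a}{\omega_\Psi^{-1}(p/a)}\,|x|_{\Psi_a}\qquad\text{for all } x\in\R^n,\; p>0.
\]
This is an unwinding of definitions. Set $c=|x|_{\Psi_a}$, so $\Psi(ax/c)\le a$. By the very definition of $\omega_\Psi$, for every $u>0$ one has $\Psi(u\cdot ax/c)\le \omega_\Psi(u)\Psi(ax/c)\le a\omega_\Psi(u)$. Writing $u=pc/(ad)$ and requiring $a\omega_\Psi(u)\le p$, i.e.\ $u\le \omega_\Psi^{-1}(p/a)$, is equivalent to $d\ge (p/a)c/\omega_\Psi^{-1}(p/a)$, and therefore any such $d$ is admissible in the infimum defining $|x|_{\Psi_p}$. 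Applying this to $x=\nabla f(x)$ and using the hypothesis, one gets $|\nabla f|_{\Psi_p}\le b(p/a)/\omega_\Psi^{-1}(p/a)$ $\mu$-a.e., hence the same bound on $\bn|\nabla f|_{\Psi_p}\bn_p$.

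\textbf{Step 2: substitute into Chebyshev.} Plugging this into Corollary~\ref{cor:Chebyshev}, for every $p\ge\beta$,
\[
\mu\Big(|f-\E_\mu f|\ge CL(K,D,\alpha,\beta)\,b\cdot \tfrac{p/a}{\omega_\Psi^{-1}(p/a)}\Big)\le e^{-p}.
\]

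\textbf{Step 3: choose $p$ as a function of $t$.} For a given $t>0$ put $p_0=a\,\omega_\Psi^\ast\bigl(t/(CL(K,D,\alpha,\beta)b)\bigr)$. By the discussion preceding Lemma~\ref{lemma:omega-omega-ast}, the map $s\mapsto s/\omega_\Psi^{-1}(s)$ is continuous and non-decreasing with limits $0$ and $\infty$ at $0$ and $\infty$, and $\omega_\Psi^\ast$ is its right-continuous inverse; thus $s_0:=p_0/a$ satisfies $s_0/\omega_\Psi^{-1}(s_0)\le t/(CL\cdot b)$, i.e.\ $CLb\cdot s_0/\omega_\Psi^{-1}(s_0)\le t$. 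If $p_0\ge\beta$, Step~2 with $p=p_0$ gives $\mu(|f-\E_\mu f|\ge t)\le e^{-p_0}=\exp\bigl(-a\omega_\Psi^\ast(t/(CLb))\bigr)$, which is stronger than the claimed bound by the factor $e^\beta$. If $p_0<\beta$, then $\beta-a\omega_\Psi^\ast(t/(CLb))>0$, so the right-hand side of the claimed inequality exceeds $1$ and the bound is trivial. Combining both regimes produces the asserted estimate.

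The only delicate point is verifying the direction of the inequality in Step~3: one has to make sure that the formally asymmetric definition of $\omega_\Psi^\ast$ as a supremum, combined with the continuity of $s\mapsto s/\omega_\Psi^{-1}(s)$, really yields $(p_0/a)/\omega_\Psi^{-1}(p_0/a)\le t/(CLb)$ rather than the reverse, so that Chebyshev's bound from Step~2 is applicable at $p=p_0$. Given the regularity properties recorded just before Lemma~\ref{lemma:omega-omega-ast}, this is immediate, and no real obstacle arises.
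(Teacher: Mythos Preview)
Your proof is correct and follows essentially the same strategy as the paper's: both bound $|\nabla f|_{\Psi_p}$ pointwise via the scaling relation $\Psi(u\cdot ax/c)\le a\,\omega_\Psi(u)$, then pick $p=a\,\omega_\Psi^\ast(t/(CLb))$ and split into the cases $p\ge\beta$ and $p<\beta$. The only cosmetic differences are that you first isolate the clean inequality $|x|_{\Psi_p}\le \frac{p/a}{\omega_\Psi^{-1}(p/a)}|x|_{\Psi_a}$ and then invoke the original definition of $\omega_\Psi^\ast$ together with the continuity of $s\mapsto s/\omega_\Psi^{-1}(s)$, whereas the paper works directly with $\Psi_p(L\nabla f/t)$ and appeals to the equivalent description \eqref{omega-ast-equiv-def} of $\omega_\Psi^\ast$ from Lemma~\ref{lemma:omega-omega-ast}; the underlying computation is identical.
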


A version of the above corollary was obtained in \cite[Proposition 26]{MR2430612} in the special case when $\Psi(x) = \sum_{i=1}^n H(x_i)$ and $H$ is an even convex function on $\R$ such that $t \mapsto H(t)/t^2$ is non-decreasing on $(0,\infty)$. Our result and the result of \cite{MR2430612} are not directly comparable, on the one hand in \cite{MR2430612} there is no assumption concerning the parameter $\beta$ and the constants are explicit, on the other hand the argument there is restricted to functions $\Psi$ of a special form and to $\alpha = 2$, i.e. to the case of super-Gaussian tails. We remark that the proof in \cite{MR2430612} is based on the classical Herbst argument with the Laplace transform.

We will now express the concentration property of a measure $\mu$ satisfying $mLSI(\Psi,D)$ in the language of enlargements of sets. We will do it under an additional assumption that the function $\Psi$ is convex.
\begin{cor}\label{cor:enlargements} Assume that $\Psi$ is convex and let $\Psi^\ast$ be its Legendre transform. Under the assumptions of Theorem \ref{thm:Sobolev_centered}, for every Borel set $A \subseteq \R^n$ such that $\mu(A)\ge 1/2$ and every $u>0$,
\begin{displaymath}
\mu\Big(A +\{x\in\R^n\colon \Psi^\ast(x)< u\}\Big) \ge 1 - 2e^{\beta-u/(C(K,D,\alpha,\beta))}.
\end{displaymath}
\end{cor}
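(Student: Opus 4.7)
My plan is to encode the enlargement via a ``$\Psi^\ast$-distance'' function to $A$ and then apply Corollary~\ref{cor:Chebyshev}. Given $A$ with $\mu(A)\ge 1/2$ and $u>0$, I would set
\[
  f(x) := \inf_{a \in A} \Psi^\ast(x-a), \qquad \tilde f(x) := \min\{u, f(x)\}.
\]
The key observation is $\{f<u\} = A + \{x\in\R^n\colon\Psi^\ast(x) < u\}$, so that the target estimate reduces to $\mu(\tilde f = u) \le 2 e^{\beta - u/C}$. By construction $\tilde f$ is bounded, nonnegative, locally Lipschitz (since $\Psi^\ast$ is convex, hence locally Lipschitz, and an infimum preserves a uniform local Lipschitz constant after restricting to bounded $a$'s by coercivity of $\Psi^\ast$), and $\tilde f\equiv 0$ on $A$, giving $\E_\mu \tilde f \le u\,\mu(A^c) \le u/2$.

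\textbf{Gradient estimate.} The next step will be a pointwise bound on $|\nabla\tilde f|_{\Psi_p}$. By Rademacher's theorem $\tilde f$ is differentiable a.e.; at a.e.\ $x$ with $f(x) < u$ one has $\nabla\tilde f(x) = \nabla\Psi^\ast(x-a^\ast(x))$ for some minimizer $a^\ast(x)$ (existing by coercivity of $\Psi^\ast$), and $\nabla\tilde f \equiv 0$ on $\{f > u\}$. Legendre duality together with convexity of $\Psi^\ast$ yields
\[
  \Psi\bigl(\nabla\Psi^\ast(y)\bigr) = \bigl\langle\nabla\Psi^\ast(y), y\bigr\rangle - \Psi^\ast(y) \le \Psi^\ast(2y) - 2\Psi^\ast(y),
\]
and the standard duality of Orlicz growth conditions transforms \eqref{ineq:growth-condition} into a $\Delta_2$-type estimate $\Psi^\ast(2y)\le C_0\Psi^\ast(y)$ with $C_0 = C_0(K,\alpha)$, so that $\Psi(\nabla\tilde f(x))\le C'_0\tilde f(x) \le C'_0 u$ almost everywhere. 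Exploiting \eqref{ineq:growth-condition} for $\Psi$ in its two regimes (according to whether $p\gtrless K C'_0 u$) then converts the pointwise bound on $\Psi \circ \nabla\tilde f$ into
\[
  |\nabla\tilde f(x)|_{\Psi_p} \le C_1 \max\bigl(u^{1/\alpha}p^{1/\alpha^\ast},\; u^{1/\beta}p^{1/\beta^\ast}\bigr)
\]
almost everywhere, for some $C_1 = C_1(K,\alpha,\beta)$, with $\alpha^\ast, \beta^\ast$ the Hölder conjugates. The same bound then controls $\bn|\nabla\tilde f|_{\Psi_p}\bn_p$.

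\textbf{Conclusion.} To finish, I would apply Corollary~\ref{cor:Chebyshev} to $\tilde f$ at a well-chosen $p\ge\beta$. Combined with $\mu(\tilde f = u)\le \mu(|\tilde f - \E_\mu \tilde f|\ge u/2)$ (which follows from $\E_\mu \tilde f\le u/2$), the corollary gives $\mu(\tilde f = u)\le e^{-p}$ as soon as $CL(K,D,\alpha,\beta)\cdot\bn|\nabla\tilde f|_{\Psi_p}\bn_p \le u/2$. A direct computation will show this holds whenever $p \le u/C_2$ for some $C_2 = C_2(K,D,\alpha,\beta)$. Choosing $p = u/C_2$ (valid when $u\ge C_2\beta$) then yields $\mu((A+\{\Psi^\ast<u\})^c)\le e^{-u/C_2}\le 2e^{\beta - u/C_2}$; for $u<C_2\beta$ the claimed right-hand side already exceeds $1$ and the bound is trivial. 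Setting $C(K,D,\alpha,\beta) := C_2$ gives the statement.

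\textbf{Main obstacle.} The technically nontrivial ingredient will be the transfer of $(G_{K,\alpha,\beta})$ from $\Psi$ to a $\Delta_2$-type condition on $\Psi^\ast$ under Legendre duality — needed for the crucial inequality $\Psi(\nabla\Psi^\ast(y))\le C\Psi^\ast(y)$ — together with careful bookkeeping through the two growth regimes of $\Psi$ when passing from $\Psi(\nabla\tilde f)$ to $|\nabla\tilde f|_{\Psi_p}$, in order to correctly balance $u$ against $p$ in the Chebyshev step. Once these are in place, the rest of the argument is routine.
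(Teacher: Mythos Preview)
Your approach is correct but takes a genuinely different route from the paper's. The paper works with the Orlicz \emph{norm} distance
\[
  f(x) = \inf_{y\in A} |x-y|_{u^{-1}\Psi^\ast},
\]
which is automatically $1$-Lipschitz for $|\cdot|_{u^{-1}\Psi^\ast}$; Orlicz norm duality (together with $(\Psi_u)^\ast = u^{-1}\Psi^\ast$) then gives the clean pointwise bound $|\nabla f|_{\Psi_u}\le 1$ with no further work, and Corollary~\ref{cor:Lipschitz_conc} (via the median version, Lemma~\ref{le:median}) handles the rest through the $\omega_\Psi^\ast$ machinery already set up. You instead take $\Psi^\ast$ itself as the ``distance,'' which forces you to recover the gradient control by hand: Fenchel--Young gives $\Psi(\nabla\Psi^\ast(y)) = \langle\nabla\Psi^\ast(y),y\rangle - \Psi^\ast(y)$, a $\Delta_2$ bound on $\Psi^\ast$ (dual to the lower growth in $(G_{K,\alpha,\beta})$) turns this into $\Psi(\nabla\tilde f)\le C\tilde f$, and then you convert to an $|\cdot|_{\Psi_p}$ bound by splitting into the $\alpha$- and $\beta$-regimes of $(G_{K,\alpha,\beta})$ before invoking Corollary~\ref{cor:Chebyshev} directly. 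In effect you are redoing, in this specific situation, the work that Corollary~\ref{cor:Lipschitz_conc} encapsulates. The paper's route is shorter and reuses existing lemmas; yours is more self-contained (it bypasses $\omega_\Psi^\ast$ entirely) at the price of heavier bookkeeping. One small point to tighten: the claim $\nabla\tilde f(x)=\nabla\Psi^\ast(x-a^\ast(x))$ should be phrased via subgradients (at a point where $\tilde f$ is differentiable, $\nabla\tilde f(x)\in\partial\Psi^\ast(x-a^\ast)$, and Fenchel--Young equality holds for any subgradient), since $\Psi^\ast$ need not be differentiable at $x-a^\ast(x)$; this does not affect the subsequent estimates.
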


\paragraph{\bf Example} Consider $\Psi(x) = \sum_{i=1}^n (|x_i|^2\ind{|x_i|\le 1} + |x_i|^r\ind{|x_i| > 1})$ for some $r \in (1,\infty)$. For $r < 2$, the function $\Psi$ is not convex, but one can easily see that it is equivalent to a convex function, so we can still apply Corollary \ref{cor:enlargements} at the cost of adjusting the constants. In this case (after replacing $\Psi^\ast$ by an equivalent function) if one denotes by $B_p^n$ the unit ball of $\ell_p^n$, one obtains that for $r \ge 2$,
\begin{displaymath}
\p(A + \sqrt{u}B_2^n + u^{1/r^\ast}B_{r^\ast}^n) \ge 1 - C_{r,D}e^{-\frac{u}{C_{r,D}}}
\end{displaymath}
whereas for $r \in (1,2)$,
\begin{displaymath}
\p(A + (\sqrt{u}B_2^n) \cap(u^{1/r^\ast}B_{r^\ast}^n)) \ge 1 - C_{r,D}e^{-\frac{u}{C_{r,D}}}.
\end{displaymath}

In the case of the product distribution with marginal densities proportional to $e^{-|x_i|^{r^\ast}}$ the above inequalities were first obtained by Talagrand \cite{TalCan} (for $r \ge 1$). The first functional approach was proposed by Bobkov and Ledoux \cite{BobLed_exp} ($r = \infty$) and Barthe and Roberto \cite{MR2430612} ($r \in [2,\infty)$ as well as more general concentration rates between subexponential and sub-Gaussian), who used the modified log-Sobolev inequalities introduced by Gentil, Guillin and Miclo. A uniform setting for various types of concentration inequalities, including the ones mentioned above was proposed by Gozlan, who used Poincar\'e inequalities with modified norms of gradients \cite{MR2682264}. There are some subtle differences between the strength of various approaches, for instance Gozlan's approach works also for $ r = \infty$ and his constants do not depend on $r$. On the other hand in the non-product case his method introduces some dependence on the dimension $n$ (see e.g. Proposition 1.2. in \cite{MR2682264}).

\subsection{Further corollaries. Concentration inequalities for polynomials\label{sec:polynomials}}
In this section we will present corollaries concerning polynomial like functions. First we will consider homogeneous polynomials with coefficients in a Banach space, then arbitrary real valued polynomials or more generally functions with bounded derivatives of order $k$. The proofs of presented results are deferred to Section \ref{sec:polynomials_proofs}.

To formulate our results in a concise way we will need to introduce some additional notation. Namely for two $k$-indexed matrices $A = (a_{i_1,\ldots,i_k})_{i_1,\ldots,i_k=1}^n$ and
$B = (b_{i_1,\ldots,i_k})_{i_1,\ldots,i_k=1}^n$, where $a_{i_1,\ldots,i_k} \in E$ for some Banach space $E$ and $b_{i_1,\ldots,i_k} \in \R$ we set
\begin{displaymath}\langle A,B\rangle = \sum_{i_1,\ldots,i_k=1}^n a_{i_1,\ldots,i_k}b_{i_1,\ldots,i_k}.
 \end{displaymath}
Moreover, for vectors $x^1,\ldots,x^k \in \R^n$ we define $x^1\otimes\cdots\otimes x^k = (x^1_{i_1}\cdots x^k_{i_k})_{i_1,\ldots,i_k=1}^n$. With this convention, the $E$-valued homogeneous form of degree $k$, given by matrix $A$ as above, i.e.
\begin{displaymath}
\sum_{i_1,\ldots,i_k = 1}^n a_{i_1,\ldots,i_k}x_{i_1}\cdots x_{i_k}
\end{displaymath}
can be written simply as $\langle A,x^{\otimes k}\rangle$.

By $\D^k f$ we will denote the $k$-th derivative of a function $f\colon \R^n \to \R$, which we will identify with the corresponding $k$-indexed matrix of partial derivatives.

\subsubsection{Concentration for Banach space valued chaos}

Let $(E,|\cdot|_E)$ be a separable Banach space and $A = (a_{i_1,\ldots,i_k})_{i_1,\ldots,i_k=1}^n$ a $k$-indexed $E$-valued matrix and $X = (X_1,\ldots,X_n)$ a random vector in $\R^n$. We will consider the random variable $Z = |\langle A,X^{\otimes k}\rangle|_E$. Without loss of generality we will assume that $A$ is symmetric, i.e. for any permutation $\sigma$ of the set $\{1,\ldots,k\}$, $a_{i_1,\ldots,i_k} = a_{i_{\sigma(1)},\ldots,i_{\sigma(k)}}$.

Our main result is the following

\begin{theorem}\label{thm:Borell} Assume that $\Psi$ is a convex function satisfying the conditions (C) and $(G_{K,\alpha,\beta})$ and let $X$ be a random vector in $\R^n$, whose law is absolutely continuous and satisfies the $mLSI(\Psi,D)$. For any $p \ge \beta$,
\begin{align}\label{eq:Borell}
\|Z - \E Z\|_p \le C_{D,K,\alpha,\beta,k}\sum_{j=1}^k \E\sup_{y^1,\ldots,y^j \in A_{\Psi,p}}|\langle A,y^1\otimes\cdots\otimes y^j \otimes X^{\otimes(k-j)}\rangle|_E,
\end{align}
where $A_{\Psi,p} = \{x \in R^n\colon \Psi^\ast(x) \le p\}$. As a consequence, for any $p \ge \beta$,
\begin{displaymath}
\p\Big(|Z - \E Z| \ge C_{D,K,\alpha,\beta,k}\sum_{j=1}^k \E\sup_{y^1,\ldots,y^j \in A_{\Psi,p}}|\langle A,y^1\otimes\cdots\otimes y^j \otimes X^{\otimes(k-j)}\rangle|_E\Big) \le e^{-p}.
\end{displaymath}
\end{theorem}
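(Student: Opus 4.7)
The plan is to obtain the moment bound by iteratively applying Theorem~\ref{thm:Sobolev_centered} to functions of the form $F_j(x) = \sup_{y^1,\ldots,y^j \in A_{\Psi,p}} |\langle A, y^1 \otimes \cdots \otimes y^j \otimes x^{\otimes(k-j)}\rangle|_E$, starting from $F_0(x) = |\langle A, x^{\otimes k}\rangle|_E$ (so $Z = F_0(X)$) and ending with $F_k$, which is a constant. At each stage one peels off one factor of $X$; the recursion $\|F_j\|_p \le \mathbb{E} F_j + \|F_j - \mathbb{E} F_j\|_p \le \mathbb{E} F_j + C L(K,D,\alpha,\beta)(k-j)\|F_{j+1}\|_p$ then yields the telescoped bound $\|Z - \mathbb{E} Z\|_p \le \sum_{j=1}^k C_{D,K,\alpha,\beta,k} \mathbb{E} F_j$, which is exactly~\eqref{eq:Borell}. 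The second display in the theorem follows from the first by Chebyshev's inequality.

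The recursion rests on a dual description of the norm $|\cdot|_{\Psi_p}$ in terms of the set $A_{\Psi,p}$. Since $\Psi$ is convex, a direct computation gives $(\Psi_p)^\ast(y) = \Psi^\ast(y)/p$, so the unit ball of the Luxemburg norm $|\cdot|_{(\Psi_p)^\ast}$ equals $A_{\Psi,p}$. Orlicz duality (Luxemburg vs. Orlicz norm, giving a factor $\le 2$) then yields
\[
  c_1 \sup_{y \in A_{\Psi,p}} \langle x, y\rangle \le |x|_{\Psi_p} \le c_2 \sup_{y \in A_{\Psi,p}} \langle x, y\rangle
\]
with universal constants. For $F_j$, write $F_j(x) = \sup_{(y^1,\ldots,y^j)} |g_{y^1,\ldots,y^j}(x)|_E$ with $g$ a polynomial of degree $k-j$ in $x$; $F_j$ is locally Lipschitz as a supremum of polynomials with uniformly bounded Lipschitz constants on compacts (since $A_{\Psi,p}$ is bounded). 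Approximating the sup by the max of a countable dense subfamily (using separability of $E$, via a countable norming set in $B_{E^\ast}$ for the Banach-norm $|\cdot|_E$), at almost every $x$ one obtains
\[
  |\nabla F_j(x) \cdot v| \le (k-j)\sup_{y^1,\ldots,y^{j+1} \in A_{\Psi,p}} |\langle A, y^1\otimes\cdots\otimes y^{j+1} \otimes v \otimes x^{\otimes(k-j-1)}\rangle|_E,
\]
and optimizing over $v$ with $v/(c_2) \in A_{\Psi,p}$ via the displayed dual description gives $|\nabla F_j(x)|_{\Psi_p} \le C(k-j) F_{j+1}(x)$. Applying Theorem~\ref{thm:Sobolev_centered} to $F_j$ produces the step $\|F_j - \mathbb{E} F_j\|_p \le C L(K,D,\alpha,\beta)(k-j)\|F_{j+1}\|_p$ of the recursion.

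The main technical obstacle, in my view, is the rigorous treatment of the gradient of the supremum, particularly the interaction of the Banach-space norm $|\cdot|_E$ (which may fail to be Gâteaux differentiable) with the parametric supremum over $A_{\Psi,p}$. I would handle this by first reducing, via separability of $E$, to a countable family of linear functionals: $|g|_E = \sup_{\phi \in \mathcal{F}} \phi(g)$ with $\mathcal{F}$ countable and norming; each $\phi \circ g_{y^1,\ldots,y^j}$ is a smooth polynomial whose gradient satisfies the pointwise bound trivially, and the passage from finite maxima to the full supremum uses that the weak gradient of $\max_\ell h_\ell$ is pointwise dominated by $\max_\ell |\nabla h_\ell|$. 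A secondary issue is integrability: Theorem~\ref{thm:Sobolev_centered} requires $F_j$ to be $\mu$-integrable, but this follows from the fact that $mLSI(\Psi, D)$ with $\alpha > 1$ implies finiteness of all polynomial moments of $X$ (already via the Herbst argument applied to linear functions), so $\mathbb{E}|X|^{mp} < \infty$ for every $m$, and hence each $F_j$ lies in every $L^q(\mu)$. If one prefers to avoid this consideration, one can apply the whole argument to the truncations $\min(F_0, M)$ and let $M \to \infty$, since the right-hand side of~\eqref{eq:Borell} is independent of $M$.
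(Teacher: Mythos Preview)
Your proposal is correct and follows essentially the same route as the paper: both arguments iterate Theorem~\ref{thm:Sobolev_centered}, using the duality $|\cdot|_{\Psi_p} \simeq \sup_{y\in A_{\Psi,p}}\langle\,\cdot\,,y\rangle$ to peel off one tensor factor at a time (the paper phrases this as induction on $k$ after first reducing $E$ to $\ell_\infty^N$, which is the cleaner way to justify the a.e.\ gradient bound for the supremum). One small slip: in your displayed bound for $|\nabla F_j(x)\cdot v|$ the supremum should run over $y^1,\ldots,y^j$ (not $y^{j+1}$)---the extra variable enters only after you dualize over $v$, which is exactly what produces $F_{j+1}$.
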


Versions of the above theorem were first obtained for Gaussian vectors by Borell \cite{Bo} and Arcones-Gin\'e \cite{MR1201060}. Subsequently they were proved for $X$ with independent coordinates possessing log-concave tails by {\L}ochowski \cite{Loch}) and Adamczak \cite{AdLogSobConv}. However, they provided rather estimates of $\|Z\|_p$ and the deviation of $Z$ above $C\E Z$, then concentration around $\E Z$.

Let us analyze the quantities appearing on the right-hand side of \eqref{eq:Borell}. Except for the one corresponding to $i=k$, they are all expectations of suprema of random variables and as such are difficult to estimate. The exceptional term however is `deterministic' and it is easy to see that for $p \to \infty$ it dominates the whole sum. Estimates of this form may be therefore used to obtain some large deviation type estimates for $|Z - \E Z|$. Also, in certain situations estimates of the troublesome expectations are available. This is the case e.g. if $X$ is a Gaussian vector and $E$ is real \cite{L2} or more generally $E$ is a Hilbert space (this result is unpublished but it may be recovered from estimates in \cite{L2}), and also if $E = \R$, $X$ has independent coordinates with log-concave tails and $k \le 3$ \cite{L3, L1, Chaos3d}.

\paragraph{\bf Example} Let us illustrate Theorem \ref{thm:Borell} on a simple example of a real-valued quadratic form $Z = \sum_{i,j=1}^n a_{ij}X_iX_j$ in a centered random vector $X=(X_1,\ldots,X_n)$ whose law $\mu$ satisfies $mLSI(\Psi,D)$ with $\Psi(x) = |x|_q^q$ for some $q \in (1,2]$ (the case studied in \cite{MR1800062,MR2146071}).  We have $\Psi^\ast(x) = C_q |x|_{q^\ast}^{q^\ast}$, therefore we obtain

\begin{align*}
\|Z - \E Z\|_p & \le C_{D,q} \Big(p^{1/q^\ast} \E\sup_{y \in B_{q^\ast}^n} \Big|\sum_{i,j=1}^n a_{ij} y_i X_j\Big| + p^{2/q^\ast} \sup_{x,y\in B_{q^\ast}^n} \Big|\sum_{i,j=1}^n a_{ij} x_iy_j\Big|\Big)\\
&= C_{D,q} \Big(p^{1/q^\ast} \E\Big( \sum_{i=1}^n \Big|\sum_{j=1}^na_{ij} X_j\Big|^q\Big)^{1/q} + p^{2/q^\ast} \sup_{x,y\in B_{q^\ast}^n} \Big|\sum_{i,j=1}^n a_{ij} x_iy_j\Big|\Big)\\
& \le C_{D,q} \Big(p^{1/q^\ast} \Big( \sum_{i=1}^n \E \Big|\sum_{j=1}^na_{ij} X_j\Big|^q\Big)^{1/q} + p^{2/q^\ast} \sup_{x,y\in B_{q^\ast}^n} \Big|\sum_{i,j=1}^n a_{ij} x_iy_j\Big|\Big).
\end{align*}
Now, by Proposition \ref{prop:Poincare}, applied with $\beta = 2$, for each $i$ (note that $\Psi$ satisfies \eqref{ineq:growth-condition} with $K = 1, \alpha = q$ and $\beta = 2$),
\begin{displaymath}
\E \Big|\sum_{j=1}^na_{ij} X_j\Big|^q\le \bn \sum_{j=1}^n a_{ij} X_j\bn_2^q \le C_{D,q} |(a_{ij})_{j=1}^n|_{\Psi_2}^q = 2^{q/q^\ast} C_{D,q}|(a_{ij})_{j=1}^n|_q^q.
\end{displaymath}
Thus we obtain
\begin{displaymath}
\|Z - \E Z\|_p \le C_{D,q}(p^{1/q^\ast} A + p^{2/q^\ast} B),
\end{displaymath}
where
\begin{displaymath}
A = \Big(\sum_{i,j=1}^n |a_{ij}|^q\Big)^{1/q}, \quad B = \sup_{x,y \in B_{q^\ast}^n} \Big|\sum_{i,j=1}^n a_{ij}x_i y_j\Big|.
\end{displaymath}
As a consequence, for all $t \ge 0$,
\begin{align}\label{eq:q_Hanson-Wright}
\p(|Z - \E Z| \ge t) \le 2\exp\Big(-c_{D,q} \min\Big(\frac{t^{q^\ast}}{A^{q^\ast}},\frac{t^{q^\ast/2}}{B^{q^\ast/2}}\Big)\Big).
\end{align}

Clearly Theorem \ref{thm:Borell} may be applied also to quadratic forms or forms of higher order, with values in Banach spaces, but the resulting estimates will be then given in terms of expectations of suprema, which may not be so easy to estimate. Let us remark that inequalities of the form \eqref{eq:q_Hanson-Wright} with $q=2$ are known as Hanson-Wright inequalities. In a slightly weaker form they were first proven in \cite{HW} for quadratic forms in independent sub-Gaussian variables.

\subsubsection{Comparison principles for real-valued polynomials \label{sec:real_polynomials}}
We will now restrict to a special choice of the function $\Psi$ related to the study of moments of linear combinations of i.i.d. random variables with logarithmically concave tails. We will start with a brief description of the results by Gluskin-Kwapie\'n \cite{GK}.

\begin{theorem}\label{thm:GK}
Let $\Phi \colon \R_+ \to \R_+\cup\{\infty\}$ be a convex non-decreasing function, such that $\Phi(0) = 0$ and $\Phi(1) = 1$. Consider a sequence $Z_1,\ldots,Z_n$ of independent symmetric random variables
satisfying $\p(|Z_i| \ge t) = e^{-\Phi(t_i)}$. Define the functions $\tilde{\Phi}\colon \R_+ \to \R_+\cup\{\infty\}$,
\begin{displaymath}
\tilde{\Phi}(x) = \left\{\begin{array}{ccc}|x|^2&\textrm{if}& |x| \le 1\\
\Phi(x)&\textrm{if}& |x| \ge 1
\end{array}\right.
\end{displaymath}
and $\Psi\colon \R^n \to \R_+\cup\{\infty\}$,
\begin{displaymath}
\Psi(x) = \sum_{i=1}^n {\tilde{\Phi}}^\ast(x_i),
\end{displaymath}
where $\tilde{\Phi}^\ast$ is the Legendre transform of $\tilde{\Phi}$.

Then for every sequence $x_1,\ldots,x_n$ of real numbers and every $p \ge 2$,
\begin{displaymath}
\frac{1}{C}|x|_{\Psi_p} \le \Big\|\sum_{i=1}^n x_iZ_i\Big\|_p\le C|x|_{\Psi_p}.
\end{displaymath}
\end{theorem}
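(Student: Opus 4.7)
\medskip

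\noindent\textbf{Proof proposal for Theorem \ref{thm:GK}.} The plan is to establish matching upper and lower bounds for $\|\sum_i x_i Z_i\|_p$ and to verify directly that both are equivalent, up to absolute constants, to $|x|_{\Psi_p}$. The argument splits into three ingredients: (i) an explicit reformulation of the implicit quantity $|x|_{\Psi_p}$, (ii) an upper moment bound via truncation at the level $\tilde\Phi^{-1}(p)$, and (iii) matching lower bounds through an $\ell^\infty$-type estimate and an $L^2$ estimate.

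First I would unpack the definition
\[
|x|_{\Psi_p} = \inf\Big\{a>0 \colon \sum_i \tilde\Phi^\ast(p|x_i|/a) \le p\Big\}.
\]
Because $\tilde\Phi(t)=t^2$ on $[-1,1]$ and $\tilde\Phi(t)=\Phi(|t|)$ outside, the Legendre transform satisfies $\tilde\Phi^\ast(s) \simeq s^2$ for small $s$ and $\tilde\Phi^\ast(s) \simeq \Phi^\ast(s)$ for large $s$, with equivalence constants depending only on the matching at $t=\pm 1$. Thus $|x|_{\Psi_p}$ is equivalent to the smallest $a>0$ for which
\[
\sum_{i\colon p|x_i|/a \le 1}(p|x_i|/a)^2 + \sum_{i\colon p|x_i|/a > 1}\Phi^\ast(p|x_i|/a) \le p,
\]
which exposes a "Gaussian regime'' and a "tail regime'' determined by the threshold $p|x_i|/a \simeq 1$. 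Using the identity $\Phi\circ\Phi^{-1}=\mathrm{id}$ and duality for $\tilde\Phi$, this reformulation is further equivalent to the explicit quantity
\[
\inf\Big\{a>0 \colon \sqrt{p\textstyle\sum_i x_i^2}\, +\, p\max_i|x_i|\,\big\|(\Phi^{-1})^{-1}\bigl(p|x_i|/a\bigr)\big\|\text{-type}\;\text{terms} \le a\Big\},
\]
so that the norm reduces to a sum of two explicit pieces: a square-root term and a term capturing the largest coordinates against $\Phi^{-1}(p)$.

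For the upper bound, following the Gluskin--Kwapie\'n truncation, set $M = \Phi^{-1}(p)$ and decompose $Z_i = Z_i' + Z_i''$ with $Z_i'=Z_i\ind{|Z_i|\le M}$ (centered, bounded, independent, symmetric) and $Z_i''=Z_i\ind{|Z_i|>M}$ (with $\p(Z_i''\ne 0)=e^{-p}$). Bennett's inequality gives
\[
\Big\|\sum_i x_i Z_i'\Big\|_p \lesssim \sqrt{p\textstyle\sum_i x_i^2 \E (Z_i')^2} + pM\max_i|x_i|,
\]
while the tail part is handled by a direct moment computation: since $\p(Z_i''\ne 0)=e^{-p}$, at most roughly one summand is nonzero with probability $\simeq 1$ at the level of the $p$-th moment, yielding $\|\sum_i x_i Z_i''\|_p \lesssim (\sum_i |x_i|^p \E|Z_i|^p \ind{|Z_i|>M})^{1/p}$, which after integrating tails is controlled by the terms identified in Step~1. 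For the lower bound I would combine two standard estimates valid for independent symmetric variables: $\|\sum_i x_iZ_i\|_p \ge \tfrac12\|\max_i|x_iZ_i|\|_p$ (giving, via $\|Z_i\|_p \simeq \tilde\Phi^{-1}(p)$, the tail part of the norm), together with $\|\sum_i x_i Z_i\|_p \gtrsim \sqrt{p\sum_i x_i^2 \E (Z_i\ind{|Z_i|\le M})^2}$ obtained from Paley--Zygmund applied to the truncated $L^2$ component (giving the Gaussian part).

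The main obstacle is the bookkeeping in Step~1: the norm $|x|_{\Psi_p}$ is defined implicitly through an infimum, and matching it to the explicit $\sqrt{p}$-and-$\Phi^{-1}(p)$ decomposition used in Steps~2 and~3 requires a careful analysis of the Young inequality for $\tilde\Phi$ and the two regimes of $\tilde\Phi^\ast$. Once the threshold interpretation of the norm is established, the individual moment inequalities in the upper and lower bounds are classical, and the match between them and $|x|_{\Psi_p}$ follows by the duality $\Phi^\ast(\Phi'(t))=t\Phi'(t)-\Phi(t)$ and the defining relations at the threshold $\tilde\Phi^{-1}(p)$.
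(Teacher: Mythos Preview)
The paper does not supply a proof of Theorem~\ref{thm:GK}: it is quoted as the main result of Gluskin and Kwapie\'n \cite{GK} and used as a black box (see the sentence ``have been proved by Gluskin and Kwapie\'n in \cite{GK}'' in the introduction, and the way Corollary~\ref{cor:comparison} and Theorem~\ref{thm:higher_derivatives} merely invoke it). So there is no ``paper's own proof'' to compare against; your proposal is an attempt to reconstruct the original Gluskin--Kwapie\'n argument.

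As a sketch of that argument, your outline points in the right direction---truncation at level $\tilde\Phi^{-1}(p)$ for the upper bound, and a combination of an $\ell^\infty$-type and an $L^2$-type estimate for the lower bound---but Step~1 as written is not a proof. The line with ``$\|(\Phi^{-1})^{-1}(p|x_i|/a)\|$-type terms'' is a placeholder, not a formula, and the claimed reduction of $|x|_{\Psi_p}$ to ``a square-root term plus a term capturing the largest coordinates against $\Phi^{-1}(p)$'' is precisely the nontrivial content of the theorem: for general $\Phi$ the large-coordinate contribution is not simply $\Phi^{-1}(p)\max_i|x_i|$ but involves the whole rearranged sequence (cf.\ the formula $|x|_{\Psi_p}\simeq p^{1/r^\ast}|(x_i^\ast)_{i\le\lfloor p\rfloor}|_r + \sqrt{p}\,|(x_i^\ast)_{i>\lfloor p\rfloor}|_2$ displayed in the paper's Example~3 for $\Psi$ built from $|x|^r$ with $r\in[1,2]$). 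Likewise, Bennett's inequality alone does not give the bounded-part estimate in the form you need; Gluskin--Kwapie\'n control the truncated sum by a direct Rosenthal-type moment computation that already produces the Orlicz expression, not a two-term surrogate. Until Step~1 is made precise, the matching of Steps~2--3 to $|x|_{\Psi_p}$ remains an assertion rather than an argument.
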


We remark that the assumption $\Phi(1) = 1$ is just a normalization condition which allows to obtain two-sided moment estimates with a universal constant $C$ (otherwise one would have to replace $C$ by some (explicit) constant $C_\Psi$).

As already mentioned in the introduction, modified log-Sobolev inequalities with the function $\Psi$ as in Theorem \ref{thm:GK} were introduced by Gentil-Guillin-Miclo \cite{MR2198019,MR2351133} and further studied by Barthe-Roberto \cite{MR2430612} (when $\Phi(x)/x^2$ is non-increasing, which corresponds to super-Gaussian tail behaviour) and Gentil \cite{MR2487856} (when $\Phi(x)/x^2$ is non-decreasing, which corresponds to sub-Gaussian tail behaviour).

In view of Theorem \ref{thm:GK}, our Theorem \ref{thm:Sobolev_centered} can be given an interpretation in terms of independent random variables.

\begin{cor}\label{cor:comparison}
Under the notation of Theorem \ref{thm:GK}, further assume that for some $K \ge 1$ and $1<\alpha \le 2\le \beta < \infty$,
\begin{align}\label{ineq:growth-of-Phi}
  K^{-1} t^{\beta^\ast} \le \frac{\tilde{\Phi}(tu)}{\tilde{\Phi}(u)} \le K t^{\alpha^\ast},
\end{align}
for all $t \ge 1$ and $u > 0$.
Assume that a measure $\mu$ on $\R^n$ satisfies the $mLSI(\Psi,D)$. Let $X$ be a random vector with law $\mu$ and a vector $Z = (Z_1,\ldots,Z_n)$ be a sequence of i.i.d. symmetric random variables, independent of $X$, such that $\p(|Z_i|\ge t) = e^{-\Phi(t)}$ for $t \ge 0$. Then for every locally Lipschitz function $f\colon \R^n \to \R$ and every $p \ge 2$,
\begin{displaymath}
\|f(X)-\E f(X)\|_p \le C(K,D,\alpha,\beta) \|\langle \nabla f(X),Z\rangle \|_p.
\end{displaymath}
\end{cor}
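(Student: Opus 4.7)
The plan is to combine Theorem~\ref{thm:Sobolev_centered}---which bounds $\|f(X)-\E f(X)\|_p$ by the $L^p$ norm of $|\nabla f(X)|_{\Psi_p}$---with the Gluskin--Kwapie\'n two-sided estimate of Theorem~\ref{thm:GK}, applied conditionally on $X$, which identifies $|v|_{\Psi_p}$ pointwise with $\|\langle v, Z\rangle\|_{L^p(Z)}$. A preparatory step is to verify that $\Psi(x) = \sum_{i=1}^n \tilde\Phi^\ast(x_i)$ satisfies the growth condition~\eqref{ineq:growth-condition} with the prescribed exponents $\alpha, \beta$ and some constant $K' = K'(K,\alpha,\beta)$. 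This reduces to a standard Legendre-transform computation: the hypothesized two-sided power bound~\eqref{ineq:growth-of-Phi} on $\tilde\Phi$ with exponents $\beta^\ast \le 2 \le \alpha^\ast$ dualizes into the conjugate bounds $K'^{-1} t^\alpha \le \tilde\Phi^\ast(tv)/\tilde\Phi^\ast(v) \le K' t^\beta$ for $t \ge 1$, and summation over coordinates preserves the property.

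For $p \ge \beta$, Theorem~\ref{thm:Sobolev_centered} then gives
\[
  \|f(X) - \E f(X)\|_p \le C L(K',D,\alpha,\beta)\, \bn |\nabla f(X)|_{\Psi_p}\bn_p.
\]
Applying the upper estimate of Theorem~\ref{thm:GK} with coefficients $x_i = \partial_i f(X)$, conditionally on $X$, yields
\[
  |\nabla f(X)|_{\Psi_p} \le C \bigl(\E_Z |\langle \nabla f(X), Z\rangle|^p\bigr)^{1/p}.
\]
Raising to the $p$-th power, integrating against the law of $X$ and using Fubini (by independence of $X$ and $Z$) gives $\bn |\nabla f(X)|_{\Psi_p}\bn_p \le C \|\langle \nabla f(X), Z\rangle\|_p$, which combined with the previous display establishes the corollary for every $p \ge \beta$.

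For $p \in [2, \beta)$ I would first use monotonicity on the left, $\|f(X)-\E f(X)\|_p \le \|f(X)-\E f(X)\|_\beta$, which is already bounded by $C\|\langle \nabla f(X), Z\rangle\|_\beta$ by the previous step. To convert this into the desired bound in terms of $\|\langle \nabla f(X), Z\rangle\|_p$ one uses that each $Z_i$ has log-concave density (since $\Phi$ is convex), hence conditionally on $X$ the random variable $\langle \nabla f(X), Z\rangle$ is a sum of independent symmetric log-concave variables and a Kahane--Khintchine-type inequality makes its $L^q(Z)$ moments mutually comparable on $[2,\beta]$ with a constant depending only on $\beta$. The main technical obstacle I expect in this step is that this gives only a pointwise-in-$X$ comparison; turning it into a bound on the joint $L^p$ norm requires combining it with the Gluskin--Kwapie\'n equivalence applied at both exponents $p$ and $\beta$ and with the monotonicity $|v|_{\Psi_p} \le |v|_{\Psi_\beta}$ from~\eqref{ineq:monotonicity}, while keeping the resulting constant of the form $C(K,D,\alpha,\beta)$.
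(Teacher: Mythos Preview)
Your treatment of the case $p\ge\beta$ is correct and coincides with the paper's argument: verify that the growth hypothesis on $\tilde\Phi$ dualizes to a $(G_{K',\alpha,\beta})$ condition on $\tilde\Phi^\ast$ and hence on $\Psi$, apply Theorem~\ref{thm:Sobolev_centered}, and then use the Gluskin--Kwapie\'n lower bound conditionally on $X$ together with Fubini.

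The case $2\le p<\beta$, however, has a genuine gap. After bounding $\|f(X)-\E f(X)\|_p$ by $C\|\langle\nabla f(X),Z\rangle\|_\beta$, you need $\|\langle\nabla f(X),Z\rangle\|_\beta\le C(K,D,\alpha,\beta)\,\|\langle\nabla f(X),Z\rangle\|_p$. The Kahane--Khintchine argument you propose only compares moments \emph{in $Z$}: conditionally on $X$ one indeed has $\|\langle v,Z\rangle\|_{L^\beta(Z)}\le C(\beta)\|\langle v,Z\rangle\|_{L^p(Z)}$. But integrating out $X$ then yields a bound of $\|\langle\nabla f(X),Z\rangle\|_{\beta}$ by the $L^\beta(X)$ norm of $\big(\E_Z|\langle\nabla f(X),Z\rangle|^p\big)^{1/p}\simeq|\nabla f(X)|_{\Psi_p}$, not by its $L^p(X)$ norm. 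Neither the monotonicity $|\cdot|_{\Psi_p}\le|\cdot|_{\Psi_\beta}$ nor Gluskin--Kwapie\'n at two exponents bridges this: both go in the wrong direction. For a concrete obstruction, take $f$ with $\nabla f(x)=g(x)e_1$; then $\langle\nabla f(X),Z\rangle=g(X)Z_1$ and by independence $\|g(X)Z_1\|_q=\|g(X)\|_q\|Z_1\|_q$, so the desired inequality would force $\|g(X)\|_\beta\le C\|g(X)\|_p$ for every scalar $g$, which the hypothesis $mLSI(\Psi,D)$ on $\mu$ certainly does not provide.

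The paper avoids this by a different route for $2\le p<\beta$: it invokes the fact that $mLSI(\Psi,D)$ implies the classical Poincar\'e inequality with a constant $C_D$, which in turn gives $\|f-\E f\|_p\le C_D'\,p\,\bn|\nabla f|\bn_p$ for all $p\ge2$. Since $p<\beta$ this is $\le C_D'\beta\,\bn|\nabla f|\bn_p$, and the normalization $\Phi(1)=1$ yields $|x|\le C|x|_{\Psi_p}$, after which Gluskin--Kwapie\'n finishes as in the large-$p$ range. The point is that Poincar\'e delivers the estimate directly at the exponent $p$, so no reverse comparison of moments in $X$ is ever needed.
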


The interest in the above reformulation of moment inequalities stems from the fact that it can be used as a linearization tool, which allows to get estimates for functions with bounded-derivatives of higher order, in particular polynomials.

\begin{theorem}\label{thm:higher_derivatives} In the setting of Corollary \ref{cor:comparison}, let $Z^1,\ldots,Z^k$ be independent copies of $Z$, independent of $X$. Then for every function $f \colon \R^n \to \R$ of class $\mathcal{C}^k$ and every $p \ge 2$ we have
\begin{displaymath}
\|f(X)-\E f(X)\|_p \le C_{D,k}\Big(\|\langle \D^k f(X),Z^1\otimes\cdots\otimes Z^k\rangle \|_p +\sum_{i=1}^{k-1} \|\langle \E_X \D^i f(X),Z^1\otimes \cdots\otimes Z^i\rangle\|_p\Big).
\end{displaymath}
\end{theorem}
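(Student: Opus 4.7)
\medskip
\noindent\textbf{Proof plan.} The plan is to prove the statement by induction on $k$, the base case $k=1$ being exactly Corollary~\ref{cor:comparison} (the sum $\sum_{i=1}^{k-1}$ is then empty). For the inductive step, the idea is to use Corollary~\ref{cor:comparison} as a ``one-step linearization'' and then apply the inductive hypothesis for $k-1$ to the resulting directional-derivative function, with $X$ integrated conditionally on $Z^1$.

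\medskip
\noindent\textbf{Main steps.} Fix $k \ge 2$, assume the theorem for $k-1$, and let $f \in \mathcal{C}^k$. First, applying Corollary~\ref{cor:comparison} to $f$ (conditionally on $Z^1$, so on the law of $X$) yields
\[
  \|f(X) - \E f(X)\|_p \le C(K,D,\alpha,\beta)\,\bigl\|\langle \nabla f(X), Z^1\rangle\bigr\|_p,
\]
where the norm on the right is with respect to the joint law of $(X,Z^1)$. Next, for fixed $Z^1$, set
\[
  g_{Z^1}(x) := \langle \nabla f(x), Z^1\rangle,
\]
which is a $\mathcal{C}^{k-1}$ function of $x$. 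A direct computation gives $\D^i g_{Z^1}(x) = \langle \D^{i+1}f(x), Z^1\rangle$ (contracted in one index), hence for every $j \ge 0$,
\[
  \langle \D^j g_{Z^1}(X),\, Z^2\otimes\cdots\otimes Z^{j+1}\rangle = \langle \D^{j+1} f(X),\, Z^1\otimes Z^2\otimes\cdots\otimes Z^{j+1}\rangle.
\]
By the triangle inequality,
\[
  \bigl\|\langle \nabla f(X), Z^1\rangle\bigr\|_p \le \bigl\|g_{Z^1}(X) - \E_X g_{Z^1}(X)\bigr\|_p + \bigl\|\langle \E_X \nabla f(X), Z^1\rangle\bigr\|_p,
\]
and the last term is precisely the $i=1$ summand in the statement of the theorem.

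\medskip
\noindent\textbf{Applying the inductive hypothesis.} For the first term, we condition on $Z^1$ and apply the inductive hypothesis with $k-1$ to the function $g_{Z^1}$, using the i.i.d.\ copies $Z^2,\ldots,Z^k$ (which are independent of $X$ and of $Z^1$). This yields, for every value of $Z^1$,
\begin{align*}
\bigl\|g_{Z^1}(X) - \E_X g_{Z^1}(X)\bigr\|_{L^p_{X,Z^2,\ldots,Z^k}}
&\le C_{D,k-1}\Bigl(\bigl\|\langle \D^{k-1} g_{Z^1}(X), Z^2\otimes\cdots\otimes Z^k\rangle\bigr\|_p \\
&\qquad + \sum_{i=1}^{k-2} \bigl\|\langle \E_X \D^i g_{Z^1}(X), Z^2\otimes\cdots\otimes Z^{i+1}\rangle\bigr\|_p\Bigr).
\end{align*}
Taking the $L^p$-norm with respect to $Z^1$ on both sides (Fubini) and using the identity for $\D^j g_{Z^1}$ transforms the right-hand side into
\[
  C_{D,k-1}\Bigl(\bigl\|\langle \D^k f(X), Z^1\otimes\cdots\otimes Z^k\rangle\bigr\|_p + \sum_{i=1}^{k-2} \bigl\|\langle \E_X \D^{i+1} f(X), Z^1\otimes\cdots\otimes Z^{i+1}\rangle\bigr\|_p\Bigr).
\]
A shift of summation index ($j = i+1$) turns the sum into $\sum_{j=2}^{k-1} \|\langle \E_X \D^j f(X), Z^1\otimes\cdots\otimes Z^j\rangle\|_p$. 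Combining with the $j=1$ term from the earlier triangle inequality produces the full sum $\sum_{j=1}^{k-1}$ in the statement, completing the induction.

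\medskip
\noindent\textbf{Expected obstacles.} The argument is essentially formal once Corollary~\ref{cor:comparison} is available; the main care is needed in (i) verifying that the inductive hypothesis can legitimately be applied for \emph{each} fixed value of $Z^1$ (since conditionally on $Z^1$ the law of $X$ is still $\mu$, which satisfies $mLSI(\Psi,D)$, this is unproblematic, but one should note it explicitly), and (ii) the Fubini-type manipulation that exchanges the conditional $L^p$ norm with the outer $L^p$ norm in $Z^1$. If any of the right-hand side expressions is infinite the inequality is trivial, so one may freely assume all the tensor norms are finite when carrying out the calculation. No further refinement of $\Psi$ beyond the standing assumptions is needed, and the constant $C_{D,k}$ is obtained from $C_{D,k-1}$ and the constant from Corollary~\ref{cor:comparison} by a single multiplication, so no subtle accumulation of constants occurs.
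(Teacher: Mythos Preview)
Your proposal is correct and follows essentially the same approach as the paper, which in fact does not spell out the argument but simply states that, given Corollary~\ref{cor:comparison}, the proof follows with only formal changes from the proof of Proposition~3.2 in \cite{Nonlipschitz}; that proof is precisely the induction-on-$k$ linearization argument you describe. Your treatment of the conditioning on $Z^1$ and the Fubini/triangle-inequality manipulation is exactly what is needed, so nothing further is required.
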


Note that all the terms on the right-hand side, except for the first one are moments of polynomials in independent random variables. This is also the case for the first term, provided that $f$ itself is a polynomial of degree $k$. One can thus think of Theorem \ref{thm:higher_derivatives} as a tool which allows to transfer estimates for polynomials in independent random variables to functions with bounded derivatives of higher order of random vectors $X$, whose law satisfies $mLSI(\Psi,D)$. We remark that there are many results concerning polynomials in independent random variables with log-concave tails, among available results there are hypercontractive estimates, two-sided estimates in terms of expected suprema of certain empirical processes (as in Theorem \ref{thm:Borell}) and in some cases (polynomials in Gaussian or exponential variables, polynomials in general variables with log-concave tails of degree at most 3) also precise two-sided inequalities in terms of `deterministic' quantities. We do not present the detailed discussion here, since it would require introducing rather technical notation and would anyway boil down to an application of known estimates. Instead in the example below we work out a simple application, again to a quadratic form.

\medskip

\paragraph{\bf Example:} Let $\Psi(x) = \sum_{i=1}^n(|x_i|^2\ind{|x_i|\le 1} + |x_i|^r\ind{|x_i|> 1})$ for some $r \ge 2$ and assume that $X = (X_1,\ldots,X_n)$ is a random vector whose law satisfies $mLSI(\Psi,D)$. For simplicity assume further that $X$ is centered. Consider finally a quadratic form $Y = f(X)$ for $f(x) = \sum_{i,j=1}^n a_{ij}x_ix_j$, where we assume without loss of generality that $a_{ij} = a_{ji}$. Thanks to centering, we have $ \E \nabla f(X) = 0$. Moreover $\D^2 f = (2a_{ij})_{i,j=1}^n$. Therefore, by Theorem \ref{thm:higher_derivatives}, if $Z_1,Z_1',\ldots,Z_n,Z_n'$ is a sequence of i.i.d. symmetric random variables, such that $\p(|Z_i|\ge t) = \exp(-t^{r^\ast})$, we get
\begin{displaymath}
\| Y - \E Y\|_p \le C_{D,r}\bn \sum_{i,j=1}^n a_{ij}Z_i Z_j'\bn_p
\end{displaymath}
for $p \ge 2$.

Using results from \cite{L1}, one can find a deterministic expression equivalent to the $p$-th moment on the right-hand side above. It is expressed in terms of certain norms of the matrix $A = (a_{ij})_{i,j=1}^n$, treated as a multi-linear functional on products of certain $\ell_2$ and $\ell_{r^\ast}$ spaces. More precisely,
\begin{align*}
\bn\sum_{i,j=1}^n a_{ij}Z_iZ_j'\bn_p  \simeq &p^{1/2}\|A\|_{\{1,2\}|\emptyset} + p\|A\|_{\{1\}\{2\}|\emptyset} + p^{1/r^\ast}\|A\|_{\emptyset|\{1,2\}}\\
 &+ p^{1/2+1/r^\ast}\|A\|_{\{1\}|\{2\}} + p^{2/r^\ast}\|A\|_{\emptyset|\{1\}\{2\}},
\end{align*}
where
\begin{align*}
\|A\|_{\{1,2\}|\emptyset} & = \sup\Big\{\sum_{i,j=1}^n a_{ij}x_{ij}\colon \sum_{i,j}^n x_{ij}^2 \le 1\Big\} = \Big(\sum_{i,j=1}^n a_{ij}^2\Big)^{1/2},\\
\|A\|_{\{1\}\{2\}|\emptyset} &= \sup\Big\{\sum_{i,j=1}^n a_{ij}x_{i}y_j\colon \sum_{i=1}^n x_{i}^2\le 1, \sum_{j=1}^n y_j^2 \le 1\Big\},\\
\|A\|_{\{1\}|\{2\}} & = \sup\Big\{\sum_{i,j=1}^n a_{ij}x_{i}y_j\colon \sum_{i=1}^n x_{i}^2\le 1, \sum_{j=1}^n |y_j|^{r^\ast} \le 1\Big\},\\
\|A\|_{\emptyset|\{1,2\}} & = \sup\Big\{\sum_{i,j=1}^n a_{ij}x_{ij}\colon \sum_{i,j=1}^n |x_{ij}|^{r^\ast} \le 1\Big\} = \Big(\sum_{i,j=1}^n |a_{ij}|^r\Big)^{1/r},\\
\|A\|_{\emptyset|\{1\}\{2\}} &= \sup\Big\{\sum_{i,j=1}^n a_{ij}x_{i}y_j\colon \sum_{i=1}^n |x_{i}|^{r^\ast}\le 1, \sum_{j=1}^n |y_j|^{r^\ast} \le 1\Big\}.\\
\end{align*}
As a consequence we obtain that for $p \ge 2$,
\begin{align*}
\| Y - \E Y\|_p \le & C_{D,r} \Big(p^{1/2}\|A\|_{\{1,2\}|\emptyset} + p\|A\|_{\{1\}\{2\}|\emptyset} + p^{1/r^\ast}\|A\|_{\emptyset|\{1,2\}}\\
 &+ p^{1/2+1/r^\ast}\|A\|_{\{1\}|\{2\}} + p^{2/r^\ast}\|A\|_{\emptyset|\{1\}\{2\}}\Big)
\end{align*}
and so for $t \ge 0$,
\begin{multline*}
\p(|Y - \E Y| \ge t) \\ \le 2\exp\Big(-c_{D,r} \min\Big(\frac{t^2}{\|A\|_{\{1,2\}|\emptyset}^2},\frac{t}{\|A\|_{\{1\}\{2\}|\emptyset}},
\frac{t^{r^\ast}}{\|A\|_{\emptyset|\{1,2\}}^{r^\ast}},\frac{t^{\frac{2r^\ast}{r^\ast+2}}}{\|A\|_{\{1\}|\{2\}}^{\frac{2r^\ast}{r^\ast+2}}},\frac{t^{r^\ast/2}}{\|A\|_{\emptyset|\{1\}\{2\}}^{r^\ast/2}}\Big)\Big).
\end{multline*}
In the class of random vectors satisfying $mLSI(\Psi,D)$ this estimate is optimal up to constants (as it can be reversed for the vector $Y = (Z_1,\ldots,Z_n)$). A similar derivation may be also carried out for cubic forms as two-sided estimates of their moments are known \cite{Chaos3d}, however it would involve 10 different norms of the corresponding $3$-indexed matrix (under the assumption that $X$ is isotropic). As for forms of higher order, they can also be reduced to forms in variables $Z_1,\ldots,Z_n$, by means of Theorem \ref{thm:higher_derivatives}. However finding two-sided estimates for moments of the latter forms remains open.

\subsection{Concentration results for functions with bounded Hessian under the logarithmic Sobolev inequality\label{sec:BCG}}

In this section we will consider the setting of the classical logarithmic Sobolev inequality and we  will prove a two-level concentration estimate for functions with bounded derivatives of second order, which slightly improves on the special $\mathcal{C}^2$ case of Theorem 1.2. in \cite{Nonlipschitz} and Theorem \ref{thm:higher_derivatives}.
Our approach is inspired by a very recent development by Bobkov, Chistyakov and G{\"o}tze \cite{BCG} who considered second order concentration on the sphere $S^{n-1}$. While the authors of \cite{BCG} were interested mostly in subexponential concentration, it turns out that using their approach one can also obtain two-level bounds. The goal of this section is to describe this derivation. Actually, for consistency with previous sections, we will consider a slightly more general setting and obtain inequalities in terms of moments, which allows to obtain concentration for functions with unbounded but controlled Hessian. It will be at a cost of deteriorating constants with respect to what can be obtained by working with Laplace transforms in the bounded Hessian case (as in \cite{BCG}).

Recall that for a matrix $A = (A_{i,j})_{i,j=1}^n$, by $\|A\|_{HS}$ we denote the Hilbert-Shmidt norm of $A$, whereas $\|A\|_{op}$ stands for the operator norm of $A$, i.e. $\|A\|_{HS} = \sqrt{\sum_{i,j=1}^n a_{ij}^2}$, $\|A\|_{op} = \sup_{|x|,|y|\le 1} \sum_{i,j=1}^n a_{ij}x_iy_j$.

The main result of this section is the following
\begin{theorem}\label{thm:BCG-C2}
Let $\mu$ be a probability measure on $\R^n$, such that for every $p \ge 2$ and every smooth function $f\colon \R^n \to \R$,
\begin{align}\label{eq:BCG_assumption}
\|f - \E_\mu f\|_p \le L\sqrt{p}\bn |\nabla f|_2\bn_p.
\end{align}
Let $f \colon \R^n \to \R$ be a function of class $\mathcal{C}^2$, such that the operator norm of $\D^2 f$ is uniformly bounded on $\R^n$. Then for every $t > 0$,
\begin{displaymath}
\mu(|f - \E_\mu f| \ge t ) \le e^2 \exp\Big(-\min\Big(\frac{t^2}{ a^2},\frac{t}{b}\Big)\Big),
\end{displaymath}
where
\begin{displaymath}
a^2 = 4e^2\Big(\sqrt{2}L^2\bn |\D^2 f|_{\textup{HS}}\bn_2 + L|\E_\mu \nabla f|_2\Big)^2,\quad b = 2e L^2 \bn |\D^2 f|_{\textup{op}}\bn_\infty.
\end{displaymath}
\end{theorem}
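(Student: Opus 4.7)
The plan, following the recent approach of Bobkov-Chistyakov-G\"otze~\cite{BCG}, is to use the hypothesis~\eqref{eq:BCG_assumption} \emph{twice}: first to reduce moments of $f-\E_\mu f$ to moments of $|\nabla f|_2$, and then to reduce moments of $|\nabla f - \E_\mu \nabla f|_2$ to a quantity controlled by the Hessian. First I apply~\eqref{eq:BCG_assumption} to $f$ directly, which for every $p\ge 2$ gives $\|f-\E_\mu f\|_p \le L\sqrt{p}\,\bn |\nabla f|_2 \bn_p$. Splitting $\nabla f = \E_\mu \nabla f + (\nabla f - \E_\mu \nabla f)$ and applying the triangle inequality in $\R^n$ followed by the one in $L^p(\mu)$ yields $\bn |\nabla f|_2\bn_p \le |\E_\mu \nabla f|_2 + \|g\|_p$, where $g(x) := |\nabla f(x) - \E_\mu \nabla f|_2$.

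The key observation is that $g$ is Lipschitz with pointwise Lipschitz constant at most $|\D^2 f(x)|_{\textup{op}}$: indeed, on the open set $\{\nabla f \ne \E_\mu \nabla f\}$ one has $\nabla g(x) = \D^2 f(x)(\nabla f(x)-\E_\mu \nabla f)/|\nabla f(x)-\E_\mu \nabla f|_2$ and so $|\nabla g(x)|_2 \le |\D^2 f(x)|_{\textup{op}}$, while the complementary set is negligible. Applying~\eqref{eq:BCG_assumption} to $g$ (after a standard mollification, using for instance $g_\varepsilon := \sqrt{\varepsilon + |\nabla f - \E_\mu \nabla f|_2^2}$ as the intermediate smooth approximation, which preserves the gradient bound uniformly in $\varepsilon$) then gives
\[
  \|g\|_p \le \E_\mu g + L\sqrt{p}\,\bn |\D^2 f|_{\textup{op}}\bn_\infty.
\]

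The mean $\E_\mu g$ is controlled by the Poincar\'e-type consequence of~\eqref{eq:BCG_assumption} at $p=2$, namely $\Var_\mu h \le 2L^2\E_\mu|\nabla h|_2^2$, applied coordinatewise to $h = \partial_i f$:
\[
  \E_\mu g \le (\E_\mu g^2)^{1/2} = \Bigl(\sum_{i=1}^n \Var_\mu \partial_i f\Bigr)^{1/2} \le \Bigl(2L^2 \E_\mu \sum_{i=1}^n|\nabla \partial_i f|_2^2\Bigr)^{1/2} = \sqrt{2}\,L\,\bn|\D^2 f|_{\textup{HS}}\bn_2.
\]
Chaining the previous estimates produces, for $p\ge 2$,
\[
  \|f-\E_\mu f\|_p \le A\sqrt{p} + Bp,\qquad A := L|\E_\mu \nabla f|_2 + \sqrt{2}\,L^2\bn|\D^2 f|_{\textup{HS}}\bn_2,\quad B := L^2\bn|\D^2 f|_{\textup{op}}\bn_\infty,
\]
and one checks that $a=2eA$, $b=2eB$ in the notation of the theorem. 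Setting $p := \min(t^2/a^2,\,t/b)$, provided $p\ge 2$, yields both $A\sqrt{p}\le t/(2e)$ and $Bp\le t/(2e)$, so $\|f-\E_\mu f\|_p \le t/e$ and Chebyshev's inequality gives $\mu(|f-\E_\mu f|\ge t)\le e^{-p}$; when $\min(t^2/a^2,t/b)<2$ the claimed bound is trivial thanks to the $e^2$ prefactor.

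The principal technical obstacle is the non-smoothness of $g$ on the zero set $\{\nabla f = \E_\mu \nabla f\}$, which prevents a literal application of~\eqref{eq:BCG_assumption}. The approximation by $g_\varepsilon$ (combined with a further routine convolution smoothing, if one insists on genuinely $\mathcal C^\infty$ functions) resolves this, since a direct computation shows $|\nabla g_\varepsilon(x)|_2 \le |\D^2 f(x)|_{\textup{op}}$ uniformly in $\varepsilon$, and the moment inequality passes to the limit by monotone/dominated convergence. Apart from this, the argument is just a careful two-fold use of the hypothesis together with an elementary optimization of $p$ between the sub-Gaussian and subexponential regimes.
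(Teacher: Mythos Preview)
Your proof is correct and takes essentially the same approach as the paper: two applications of~\eqref{eq:BCG_assumption} (once to $f$, once to a function whose gradient is pointwise bounded by $|\D^2 f|_{\textup{op}}$), a Poincar\'e-type bound on the centered gradient, and then Chebyshev with the optimal choice of $p$. The paper's only cosmetic variation is that it applies the second instance of~\eqref{eq:BCG_assumption} to $|\nabla f|_2$ rather than to $|\nabla f - \E_\mu\nabla f|_2$, and it packages your coordinatewise Poincar\'e step as a Gaussian-randomization identity (Lemma~\ref{le:mth_derivative}); both routes produce the same constants.
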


In fact we shall prove a more general result, from which the above theorem easily follows.

\begin{theorem}\label{thm:BCG}
Let $\mu$ be as in Theorem \ref{thm:BCG-C2}. Then for every $k \ge 2$, every $f \colon \R^n \to \R$  of class $\mathcal{C}^k$ and for every $p \ge 2$,
\begin{align}\label{eq:BCG_statement}
\|f - \E_\mu f\|_p & \le  L\sqrt{p}\E |\nabla f|_2  + L^2p\bn |\D^2 f|_{\textup{op}} \bn_p\\
&\le \sqrt{p}\Big(2^{(k-1)/2}L^k \bn |\D^k f|_2 \bn_2 + \sum_{m=1}^{k-1}2^{(m-1)/2}L^m |\E_\mu \D^m f|_2\Big) +L^2 p\bn |\D^2 f|_{\textup{op}} \bn_p,\nonumber
\end{align}
where for an $m$-indexed matrix $A = (a_{i_1,\ldots,i_m})_{i_1,\ldots,i_m=1}^n$ we denote $|A|_{2} = \sqrt{\sum_{i_1,\ldots,i_m = 1}^n a_{i_1,\ldots,i_m}^2}$ and $|\cdot|_{\textup{op}}$ is the operator norm of a (two-indexed) matrix.
\end{theorem}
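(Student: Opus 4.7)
The plan is to follow the recent Bobkov--Chistyakov--G{\"o}tze approach: iterate the hypothesis \eqref{eq:BCG_assumption}, first applying it to $f$ itself and then to the scalar function $|\nabla f|_2$. Applying \eqref{eq:BCG_assumption} to $f$ gives $\|f - \E_\mu f\|_p \le L\sqrt{p}\bn|\nabla f|_2\bn_p$. By the triangle inequality in $L^p$,
\[
  \bn|\nabla f|_2\bn_p \le \E_\mu|\nabla f|_2 + \bn|\nabla f|_2 - \E_\mu|\nabla f|_2\bn_p.
\]
To bound the centered term, I would apply \eqref{eq:BCG_assumption} to $g:=|\nabla f|_2$: at points where $\nabla f \ne 0$ the chain rule gives $\nabla g = (\D^2 f)\nabla f/|\nabla f|_2$, so $|\nabla g|_2 \le |\D^2 f|_{\textup{op}}$ almost everywhere. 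This yields $\bn|\nabla f|_2 - \E_\mu|\nabla f|_2\bn_p \le L\sqrt{p}\bn|\D^2 f|_{\textup{op}}\bn_p$, which combined with the first bound produces the first line of \eqref{eq:BCG_statement}.

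For the second line, I would first bound $\E_\mu|\nabla f|_2 \le \bn|\nabla f|_2\bn_2$ by Jensen. To unwind the right-hand side, apply the $p=2$ case of \eqref{eq:BCG_assumption} coordinatewise to each entry $\partial_{i_1\cdots i_m}f$ of $\D^m f$ and sum squares; combined with $\E X^2 = (\E X)^2 + \Var X$ and $\sqrt{a+b}\le\sqrt a + \sqrt b$, this gives the one-step recursion
\[
  \bn|\D^m f|_2\bn_2 \le |\E_\mu \D^m f|_2 + \sqrt{2}\,L\,\bn|\D^{m+1}f|_2\bn_2 , \qquad m \ge 1.
\]
An elementary induction from $m=1$ to $m=k-1$ then yields
\[
  \bn|\nabla f|_2\bn_2 \le \sum_{m=1}^{k-1}(\sqrt{2}\,L)^{m-1}|\E_\mu\D^m f|_2 + (\sqrt{2}\,L)^{k-1}\bn|\D^k f|_2\bn_2,
\]
which, plugged into the first line of \eqref{eq:BCG_statement}, produces the stated estimate.

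The only technical subtlety is the application of \eqref{eq:BCG_assumption} to the non-smooth function $g=|\nabla f|_2$, which may vanish on a set of positive measure at critical points of $f$. I would circumvent this by the standard regularization $g_\varepsilon := \sqrt{|\nabla f|_2^2 + \varepsilon}$: it is as smooth as $f$, satisfies $|\nabla g_\varepsilon|_2 \le |\D^2 f|_{\textup{op}}$ everywhere, and $g_\varepsilon \downarrow g$ as $\varepsilon \to 0^+$, so one can pass to the limit on both sides of \eqref{eq:BCG_assumption} applied to $g_\varepsilon$ by monotone/dominated convergence. Once this point is dispatched, the remainder of the argument is pure bookkeeping of the iteration.
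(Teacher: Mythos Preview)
Your proposal is correct and follows essentially the same approach as the paper's proof. The only minor differences are that the paper handles the non-smoothness of $|\nabla f|_2$ by a direct Taylor-expansion argument showing it is locally Lipschitz (rather than your regularization $g_\varepsilon$), and it derives the one-step recursion via an auxiliary standard Gaussian vector in $\R^{n^m}$ (rather than your coordinatewise application of \eqref{eq:BCG_assumption}); both pairs of alternatives lead to exactly the same bounds.
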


The advantage of Theorem \ref{thm:BCG-C2} over the $\mathcal{C}^2$ case of Theorem 1.2. in \cite{Nonlipschitz} stems from the fact that in the latter instead of $\| |\D^2 f|_{\textup{HS}} \|_2$ one has $\| |\D^2 f|_{\textup{HS}} \|_p$. As a consequence, the tail bound obtained in \cite{Nonlipschitz} uses $ \||\D^2 f|_{\textup{HS}} \|_\infty$ instead of $ \||\D^2 f|_{\textup{HS}} \|_2$. On the other hand it is not clear to us whether Theorem \ref{thm:BCG} could lead to similar improvements of the results in \cite{Nonlipschitz} in the case of functions with bounded derivatives of order higher than 2, since instead of the term $\||D^2 f|_{\textup{op}}\|_p$ the bounds in \cite{Nonlipschitz} involve $|\E_\mu \D^2 f|_{\textup{op}}$ (at the cost of introducing some additional norms of higher order derivatives). We refer the Reader to \cite{Nonlipschitz} for the details.

\section{Proofs\label{sec:proofs}}
In the proofs we will drop the subscript $\mu$ and write simply $\E, \Ent$ for $\E_\mu$, $\Ent_\mu$.

\subsection{Proofs of results from Section \ref{sec:defective}}\label{sec:defectiv_proofs}
Let us first state without proof the following well-known lemma, which follows from the convexity of the function $p \mapsto \log \|X\|_{1/p}$.

\begin{lemma}\label{le:moment_comparison} If $X$ is a random variable, such that for some $p > q>0$ and $A \ge 1$, $\|X\|_p \le A\|X\|_q$, then for all $0 < r < q$,
\begin{displaymath}
\|X\|_{p} \le A^{\frac{(p-r)q}{(p-q)r}}\|X\|_{r}.
\end{displaymath}
\end{lemma}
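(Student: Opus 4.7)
The plan is to exploit the log-convexity of $L^p$-norms, as the hint suggests, to interpolate the $q$-norm between the $r$-norm and the $p$-norm, and then absorb the resulting inequality into the hypothesis $\|X\|_p \le A\|X\|_q$.

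Concretely, I would first record the standard fact (a direct consequence of Hölder's inequality) that the function $t \mapsto \log \|X\|_{1/t}$ is convex on $(0, \infty)$. Applying this at the three points $1/p < 1/q < 1/r$ and writing $1/q$ as the convex combination $\lambda (1/p) + (1-\lambda)(1/r)$ with
\[
  \lambda = \frac{1/r - 1/q}{1/r - 1/p} = \frac{(q-r)p}{(p-r)q}, \qquad 1 - \lambda = \frac{(p-q)r}{(p-r)q},
\]
yields the interpolation bound
\[
  \|X\|_q \le \|X\|_p^{\lambda}\, \|X\|_r^{1-\lambda}.
\]

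Next I would plug this into the assumption $\|X\|_p \le A\|X\|_q$, obtaining
\[
  \|X\|_p \le A \|X\|_p^{\lambda} \|X\|_r^{1-\lambda}.
\]
Assuming $\|X\|_p < \infty$ (the inequality is trivial otherwise), dividing through by $\|X\|_p^{\lambda}$ and raising to the power $1/(1-\lambda)$ gives $\|X\|_p \le A^{1/(1-\lambda)} \|X\|_r$, and a quick computation shows $1/(1-\lambda) = (p-r)q/((p-q)r)$, which is the claimed exponent.

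No real obstacle arises here: the one technicality is the case $\|X\|_p = \infty$, which must be handled separately (but trivially, and it also shows that the convexity step remains valid in the extended-real sense via monotone convergence on truncations $|X|\wedge N$, after which one lets $N \to \infty$). The convexity step itself is classical and needs no proof in the write-up beyond a pointer to Hölder's inequality.
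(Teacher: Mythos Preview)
Your proposal is correct and is precisely the argument the paper has in mind: the lemma is stated there without proof, with only the remark that it ``follows from the convexity of the function $p \mapsto \log \|X\|_{1/p}$,'' which is exactly the interpolation step you carry out. Your computation of $\lambda$ and the exponent $1/(1-\lambda) = (p-r)q/((p-q)r)$ is correct, and the handling of the case $\|X\|_p = \infty$ is appropriate.
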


We are also going to use the following observation on the norms $|\cdot|_{\Psi_p}$.
\begin{lemma}\label{lemma:property-of-Psi}
If $\Psi$ satisfies (C) and \eqref{ineq:growth-condition} then for any $x \in \R^n$,
\[
  \Psi(x) \le K (|x|_{\Psi}^\alpha + |x|_{\Psi}^\beta).
\]
\end{lemma}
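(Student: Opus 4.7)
The plan is to combine the definition of $|x|_\Psi$ with the growth condition $(G_{K,\alpha,\beta})$ and perform a simple case split according to whether $|x|_\Psi \ge 1$ or $|x|_\Psi < 1$. Set $a = |x|_\Psi$. If $a = 0$ then (C2) forces $x = 0$ and both sides vanish, so from now on I may assume $a > 0$.

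The preliminary step is to verify that $\Psi(x/a) \le 1$. By definition of the infimum, for every $\varepsilon > 0$ there exists $b \in [a, a+\varepsilon)$ with $\Psi(x/b) \le 1$. Letting $b \searrow a$ (so $1/b \nearrow 1/a$), the left-continuity of $t \mapsto \Psi(tx)$ on $(0,\infty)$ from condition (C4) gives $\Psi(x/a) \le 1$.

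With $y := x/a$ (so that $\Psi(y) \le 1$), I would split into two cases. If $a \ge 1$, the upper half of $(G_{K,\alpha,\beta})$ applied with $t = a$ to the point $y$ yields
\[
\Psi(x) = \Psi(ay) \le K a^\beta \Psi(y) \le K a^\beta = K |x|_\Psi^\beta.
\]
If $0 < a < 1$, set $t = 1/a > 1$ and apply the lower half of $(G_{K,\alpha,\beta})$ to the point $x$ (note that $tx = y$):
\[
1 \ge \Psi(y) = \Psi(tx) \ge K^{-1} t^\alpha \Psi(x) = K^{-1} a^{-\alpha} \Psi(x),
\]
which gives $\Psi(x) \le K a^\alpha = K |x|_\Psi^\alpha$. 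In either case $\Psi(x) \le K \max(a^\alpha, a^\beta) \le K(|x|_\Psi^\alpha + |x|_\Psi^\beta)$, as desired.

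There is no real obstacle here; the statement is essentially an unravelling of the Luxemburg-type definition of $|\cdot|_\Psi$. The only point requiring mild care is ensuring that the infimum in the definition of $|x|_\Psi$ is attained in the sense that $\Psi(x/a) \le 1$, which is precisely what assumption (C4) delivers.
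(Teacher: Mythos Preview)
Your proof is correct and follows essentially the same route as the paper: establish $\Psi(x/|x|_\Psi)\le 1$ from the definition of the Luxemburg norm, then case-split on $|x|_\Psi\ge 1$ versus $|x|_\Psi<1$ and apply the upper, respectively lower, half of $(G_{K,\alpha,\beta})$. Your treatment is in fact slightly more careful than the paper's, since you explicitly invoke (C4) to justify $\Psi(x/|x|_\Psi)\le 1$ and you handle the trivial case $x=0$ separately.
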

\begin{proof}
First note that (C) implies that $\Psi(x/|x|_\Psi) \le 1$. If $|x|_\Psi \ge 1$, then
\[
  \Psi(x) = \Psi\left(|x|_\Psi \frac{x}{|x|_\Psi} \right) \le K |x|_\Psi^\beta,
\]
and if $|x|_\Psi \le 1$, then
\[
  1 \ge \Psi\left(\frac{x}{|x|_\Psi}\right) \ge K^{-1} |x|_\Psi^{-\alpha} \Psi(x).
\]
\end{proof}

We are now ready to prove Theorem \ref{thm:Sobolev}.

\begin{proof}[Proof of Theorem \ref{thm:Sobolev}]
Consider an arbitrary locally Lipschitz bounded positive function $f\colon \R^n \to \R$. Arguing as in the proof of Theorem 3.4. in \cite{Nonlipschitz}, we get
\begin{displaymath}
  \frac{d}{dt} (\E f^t )^{2/t} = \frac{2}{t^2} (\E f^t)^{\frac{2}{t} - 1}\Ent f^t.
\end{displaymath}
Thus by $dmLSI(\Psi, D, d)$ applied to the function $f^{t/2}$ and by Lemma~\ref{lemma:property-of-Psi} applied to $\Psi_p$ we have
\begin{align*}
\frac{d}{dt} \left( \E f^t \right)^{2/t} &\le
\frac{2D}{t^2} \left( \E f^t \right)^{\frac{2}{t} - 1}  \E f^t\Psi\Big(\frac{t\nabla f}{2f}\Big) + \frac{2d}{t^2}(\E f^t)^{2/t} \\
&= \frac{2Dp}{t^2} \|f\|_t^{2-t} \E f^t\Psi_p\Big(\frac{t\nabla f}{2pf}\Big) + \frac{2d}{t^2}(\E f^t)^{2/t} \\
&\le \frac{2^{1-\alpha}KD}{t^{2-\alpha} p^{\alpha-1}} \|f\|_t^{2-t} \E f^{t-\alpha} |\nabla f|_{\Psi_p}^\alpha +
     \frac{2^{1-\beta}KD}{t^{2-\beta} p^{\beta-1}} \|f\|_t^{2-t} \E f^{t-\beta} |\nabla f|_{\Psi_p}^\beta
     + \frac{2d}{t^2} \|f\|_t^2.
\end{align*}
Further denote $M = KD$.
Using H{\"o}lder's inequality with pairs of exponents $\frac{t}{t-\alpha}$, $\frac{t}{\alpha}$ and $\frac{t}{t-\beta}$, $\frac{t}{\beta}$, for $t \in (\beta, p)$ we have
\begin{align*}
  \frac{d}{dt} \|f\|_t^2 &\le
    \frac{2^{1-\alpha}M}{t^{2-\alpha} p^{\alpha-1}} \|f\|_t^{2-t} \|f\|_t^{t-\alpha} \||\nabla f|_{\Psi_p}\|_t^\alpha
    + \frac{2^{1-\beta}M}{t^{2-\beta} p^{\beta-1}} \|f\|_t^{2-t} \|f\|_t^{t-\beta} \||\nabla f|_{\Psi_p}\|_t^\beta
    + \frac{2d}{t^2} \|f\|_t^2 \\
    &\le \frac{2^{1-\alpha}M}{t^{2-\alpha} p^{\alpha-1}} \|f\|_t^{2-\alpha} \||\nabla f|_{\Psi_p}\|_p^\alpha
    + \frac{2^{1-\beta}M}{t^{2-\beta} p^{\beta-1}} \|f\|_t^{2-\beta} \||\nabla f|_{\Psi_p}\|_p^\beta
    + \frac{2d}{t^2} \|f\|_t^2.
\end{align*}
For $t \in [\beta, p]$ define
\[
  x(t) = \|f\|_t^2 / \bn |\nabla f|_{\Psi_p} \bn_p^{2}
\]
(note that by Condition (C), if the denominator above vanishes, then $f$ is constant and the theorem is trivially satisfied).
Clearly $x$ is non-decreasing and in the view of the above it satisfies
\begin{align}\label{ineq:diff-ineq-for-x}
  \frac{dx}{dt} \le M a(t) x^{1-\alpha/2} + M b(t) x^{1-\beta/2} + d c(t) x
\end{align}
for $t \in (\beta,p)$, where
\[
  a(t) = 2^{1-\alpha} t^{\alpha-2} p^{1-\alpha}, \qquad b(t) = 2^{1-\beta} t^{\beta-2} p^{1-\beta}, \qquad c(t) = \frac{2}{t^2}.
\]
Now, consider three cases:
\paragraph{\bf{Case 1:}} $x(p) \le \frac{1}{(\alpha-1)^2} \big(M^{2/\alpha} \lor M^{2/\beta}\big)$. In this case we simply have
\[
  \|f\|_p \le \frac{1}{\alpha-1} \big(M^{1/\alpha} \lor M^{1/\beta}\big) \bn |\nabla f|_{\Psi_p} \bn_p.
\]

\paragraph{\bf{Case 2:}} $x(\beta) \ge \frac{1}{(\alpha-1)^2} \big(M^{2/\alpha} \lor M^{2/\beta}\big)$. It is easy to check that for $t = \beta$ we have
\begin{align}\label{ineq:bounds-for-a-b}
  M x(t)^{1-\alpha/2} \le M^{1/\alpha} x(t)^{1/2}, \qquad M x(t)^{1-\beta/2} \le M^{1/\beta} x(t)^{1/2},
\end{align}
and since $x(t)$ is non-decreasing, we clearly have the above for all $t \in (\beta, p)$. Combining~\eqref{ineq:diff-ineq-for-x}  with~\eqref{ineq:bounds-for-a-b} yields
\[
  \frac{dx}{dt} \le \big(M^{1/\alpha} a(t) + M^{1/\beta} b(t)\big) x^{1/2} + d c(t) x.
\]
Substituting $y = x^{1/2}$ we get
\[
  \frac{dy}{dt} \le \frac12 \big(M^{1/\alpha} a(t) + M^{1/\beta} b(t)\big) + \frac12 d c(t) y,
\]
from which we easily obtain
\begin{align*}
  y(p) &\le y(\beta) e^{\frac{d}{2} \int_{\beta}^p c} + \frac12 \Big(\int_\beta^p \big(M^{1/\alpha} a(t) + M^{1/\beta} b(t)\big)
            e^{-\frac{d}{2}\int_\beta^t c} \, dt \Big) e^{\frac{d}{2} \int_\beta^p c} \\
       &\le y(\beta) e^{\frac{d}{\beta}} + \frac12 \bigg(\frac{M^{1/\alpha}}{\alpha-1} + \frac{M^{1/\beta}}{2^{\beta-1}(\beta-1)} \bigg) e^{\frac{d}{\beta}} \\
       &\le y(\beta) e^{\frac{d}{\beta}} + \frac{1}{\alpha-1} \big(M^{1/\alpha} \lor M^{1/\beta} \big) e^{\frac{d}{\beta}}.
\end{align*}
If $d \le \beta$ then the above yields
\[
  y(p) \le y(\beta) e^{d/\beta} + \frac{e}{\alpha-1}(M^{1/\alpha} \lor M^{1/\beta}),
\]
which means
\[
  \|f\|_p \le e^{d/\beta} \|f\|_\beta + \frac{e}{\alpha-1}(M^{1/\alpha} \lor M^{1/\beta}) \bn |\nabla f|_{\Psi_p}\bn_p,
\]
and if $d > \beta$ then using $y(\beta) \ge \frac{1}{\alpha-1}\big(M^{1/\alpha} \lor M^{1/\beta}\big)$,
\[
  y(p) \le 2 y(\beta) e^{d/\beta} \le e^{2d/\beta} y(\beta)
\]
hence
\[
  \|f\|_p \le e^{2d/\beta} \|f\|_\beta.
\]

\paragraph{\bf{Case 3:}} $x(t_0) = \frac{1}{(\alpha-1)^2} \big(M^{2/\alpha} \lor M^{2/\beta}\big)$ for some $t_0 \in (\beta,p)$. Arguing as in Case 2, for $y = x^{1/2}$ we have
\begin{align}\label{ineq:diff-ineq-for-y}
  y(p) &\le y(t_0) e^{\frac{d}{2} \int_{t_0}^p c} + \frac12 \Big(\int_{t_0}^p \big(M^{1/\alpha} a(t) + M^{1/\beta} b(t)\big)
            e^{-\frac{d}{2}\int_{t_0}^t c} \, dt \Big) e^{\frac{d}{2} \int_{t_0}^p c} \\
       &\le y(t_0) e^{d(\frac{1}{t_0}-\frac{1}{p})} + \frac{1}{\alpha-1} \big(M^{1/\alpha} \lor M^{1/\beta} \big) e^{d(\frac{1}{t_0}-\frac{1}{p})}.\nonumber
\end{align}
\paragraph{\bf{Case 3a:}} $t_0 \ge \frac{dp}{d+p}$. Then $\frac{1}{t_0} - \frac{1}{p} \le \frac{1}{d}$ and using the fact that $y(t_0) = \frac{1}{\alpha-1}\big(M^{1/\alpha} \lor M^{1/\beta})$, the inequality~\eqref{ineq:diff-ineq-for-y} gives
\[
  y(p) \le 2e \frac{1}{\alpha-1}\big(M^{1/\alpha} \lor M^{1/\beta}\big),
\]
hence
\[
  \|f\|_p \le 2e \frac{1}{\alpha-1}\big(M^{1/\alpha} \lor M^{1/\beta}\big) \bn |\nabla f|_{\Psi_p} \bn_p.
\]
\paragraph{\bf{Case 3b:}} $t_0 < \frac{dp}{d+p}$. Again, using the fact that $y(t_0) = \frac{1}{\alpha-1}\big(M^{1/\alpha} \lor M^{1/\beta}\big)$ the inequality~\eqref{ineq:diff-ineq-for-y} implies
\begin{align*}
  y(p) &\le y(t_0) e^{d(\frac{1}{t_0}-\frac{1}{p})} + \frac{1}{\alpha-1} (M^{1/\alpha} \lor M^{1/\beta}) e^{d(\frac{1}{t_0}-\frac{1}{p})} \\
       &= 2 y(t_0)e^{d(\frac{1}{t_0}-\frac{1}{p})} \le y(t_0) e^{2d(\frac{1}{t_0}-\frac{1}{p})},
\end{align*}
which means
\[
  \|f\|_p \le A \|f\|_{t_0},
\]
with $A = e^{2d(\frac{1}{t_0}-\frac{1}{p})}$. Using Lemma~\ref{le:moment_comparison} with $q=t_0$ and $r = \beta$ we obtain
\begin{align*}
  \|f\|_p &\le A^{\frac{(p-\beta)t_0}{(p-t_0)\beta}} \|f\|_\beta = e^{2d \frac{p-t_0}{t_0 p} \frac{(p-\beta)t_0}{(p-t_0)\beta}} \|f\|_\beta \\
    &\le e^{2d/\beta} \|f\|_\beta.
\end{align*}

This ends the proof for bounded positive functions. Let us now assume that $f\colon \R^n \to \R$ is a bounded locally Lipschitz function. Set $g_m = |f| + 1/m$ for $m =1,2,\ldots$ and note that almost everywhere with respect to the Lebesgue measure, $f$ and all the functions $g_m$ are differentiable, moreover
$\nabla g_m(x) \neq 0$ implies that $\nabla f(x) = \nabla g_m(x)$. Thus the inequality for $f$ follows by a limiting argument. Removing the boundedness assumption is straightforward by a truncation argument.
\end{proof}

Let us now pass to the sketch of the proof of Proposition \ref{prop:alpha_equal_1}.

\begin{proof}[Proof of Proposition \ref{prop:alpha_equal_1}]
It is enough to follow the steps of Theorem \ref{thm:Sobolev} and to replace the splitting value $\frac{1}{(\alpha-1)^2} \big(M^{2/\alpha} \lor M^{2/\beta}\big)$ for $x(t)$ with $\big(D^2 \lor (KD)^{2/\beta}\big) (\log p)^2$.
\end{proof}

We will now provide the proof of Proposition \ref{prop:mLSI-for-nu}, which shows that the lower bound in our assumption \eqref{ineq:growth-condition} cannot be avoided.

\begin{proof}[Proof of Proposition~\ref{prop:mLSI-for-nu}]
Let us note that for $\Psi(x) = |x|$, $mLSI(\Psi, 2)$ is equivalent to that for all bounded, locally Lipschitz functions $f \ge 0$,
\begin{equation}\label{ineq:mLSI-for-nu}
  \Ent_\nu f \le \E_\nu |f'|.
\end{equation}

For a Borel set $A \subseteq \R$, denote $\nu^+(A) = \liminf_{\varepsilon\to 0} \frac{\nu(A + (-\varepsilon,\varepsilon)) - \nu(A)}{\varepsilon}$.
It is a general fact, based on the co-area formula, that the isoperimetric inequality
\begin{equation}\label{ineq:isoperimetry-nu}
  \nu(A) \log\frac{1}{\nu(A)} \lor (1-\nu(A)) \log\frac{1}{1-\nu(A)} \le \nu^+(A)
\end{equation}
valid for all Borel sets $A \subseteq \R$ implies~\eqref{ineq:mLSI-for-nu}. Indeed, on the one hand, using the variational formula for the entropy,
\begin{equation}\label{eq:var-formula-for-ent}
  \Ent_\nu f = \sup\Big\{ \int f g \, d\nu \colon g \colon \R \to \R \text{ is measurable, bounded and } \int e^g \, d\nu \le 1 \Big\},
\end{equation}
and the Fubini theorem, for any measurable and bounded $g \colon \R \to \R$ with $\int e^g \, d\nu \le 1$ and $\rho$ defined as a finite (signed) measure on $\R$ such that $d\rho = g d\nu$ we obtain
\begin{align*}
  \int fg \,d\nu &= \int f \, d\rho = \int_0^\infty \rho(\{f > t\}) \, dt = \int_0^\infty \int_\R \ind{f(x) > t} g(x) \,\nu(dx) \, dt \\
  &\le \int_0^\infty \Ent_\nu(\ind{f > t}) \, dt = \int_0^\infty \nu(\{f > t\})\log\frac{1}{\nu(\{f > t\})} \, dt,
\end{align*}
where the inequality follows from~\eqref{eq:var-formula-for-ent}. Hence,
\[
  \Ent_\nu f \le \int_0^\infty \nu(\{f > t\})\log\frac{1}{\nu(\{f > t\})} \, dt.
\]
On the other hand, by the co-area formula (see e.g. Theorem 8.5.1. in \cite{BLG}),
\[
  \E_\nu |f'| \ge \int_0^\infty \nu^+(\{f > t\}) \,dt
\]
which combined with the previous formula shows the implication \eqref{ineq:isoperimetry-nu} $\implies$ \eqref{ineq:mLSI-for-nu}.

For the isoperimetric inequality~\eqref{ineq:isoperimetry-nu} itself, since the density of $\nu$ w.r.t. the Lebesgue measure,
\[
  f_\nu(x) = F_\nu'(x) = \begin{cases}
    \frac12 e^{-(e^{-x} - (1-x))}, & \text{for $x<0$,}\\
    \frac12 e^{-(e^x - (1+x))}, &\text{for $x \ge 0$}
  \end{cases}
\]
is log-concave, the result of Bobkov~\cite{Bobkov-Extremal-log-concave} asserts that it is enough to check~\eqref{ineq:isoperimetry-nu} for half-lines, and in fact, by symmetry of $\nu$, for $A = [x,\infty)$ with $x \ge 0$.

Let $t \in (0,1/2]$. Then $F_\nu^{-1}(1-t) = \log(1+\log\frac{1}{2t})$ and thus
\[
  f_\nu(F_\nu^{-1}(1-t)) = t(1+\log\frac{1}{2t}) \ge t \log\frac{1}{t} = t\log\frac{1}{t} \vee (1-t)\log\frac{1}{1-t},
\]
which proves the isoperimetric inequality~\eqref{ineq:isoperimetry-nu}.

For the moment estimate~\eqref{ineq:moments-estimate-for-nu} one can repeat the argument from the proof of Theorem~\ref{thm:Sobolev} to get that the function $x(t)$ as defined therein satisfies
\[
  \frac{dx}{dt} \le \frac{2}{t} x^{1/2}(t),
\]
for $t \in (1,p)$, i.e. $(x^{1/2})' \le \frac{1}{t}$ and thus
\[
  x^{1/2}(p) - x^{1/2}(1) \le \log p,
\]
which implies \eqref{ineq:moments-estimate-for-nu}.
In order to show the `moreover' part of the proposition note that
\begin{align*}
  \|x\|_{L^p(\nu)} &= \Big( e \int_0^\infty p x^{p-1} e^{-e^x} \, dx \Big)^{1/p} \ge \Big( ep \int_{\frac12 \log p}^{\log p} \big(\frac12 \log p\big)^{p-1} e^{-e^{\log p}} \, dx \Big)^{1/p} \\
  &= \Big( ep \big(\frac12 \log p\big)^p e^{-p} \Big)^{1/p} \ge \frac{\log p}{2e}.
\end{align*}

\end{proof}

\subsection{Proofs of results from Section \ref{sec:non-defective}}\label{sec:non-defective_proofs}

Let us first prove the Poincar\'e type inequality given in Proposition \ref{prop:Poincare}.

\begin{proof}[Proof of Proposition \ref{prop:Poincare}] The proof follows the approach by Bobkov and Zegarli\'nski~\cite{MR2146071} who considered the function $\Psi(x) = |x|^q$ for $q \in [1,2]$.
Let $M f$ be the median of $f$ under $\mu$.
We have $\|f - \E f\|_\beta \le 2\|f - Mf\|_\beta$ so it is enough to prove \eqref{eq:Poincare} with the mean replaced by the median. In what follows without loss of generality we will assume that $M f = 0$.

Note that for any bounded, locally Lipschitz function $g \colon \R^n \to (0,\infty)$, the inequality $mLSI(\Psi,D)$ applied to $g^{\beta/2}$ and Lemma~\ref{lemma:property-of-Psi} yield
\begin{align*}
  \Ent g^\beta &\le D \E g^\beta \Psi\Big(\frac{\beta \nabla g}{2 g}\Big)
                  = D \beta \E g^\beta \Psi_{\beta}\Big(\frac{\nabla g}{2 g}\Big) \\
               &\le KD\beta \Big( 2^{-\beta} \E |\nabla g|_{\Psi_\beta}^\beta + 2^{-\alpha} \E g^{\beta-\alpha} |\nabla g|_{\Psi_\beta}^\alpha \Big).
\end{align*}
If $\alpha < \beta$ we apply the Young inequality to the last expectation to get
\[
  \E g^{\beta-\alpha} |\nabla g|_{\Psi_\beta}^\alpha \le A^{\beta/\alpha} \frac{\alpha}{\beta} \E |\nabla g|_{\Psi_\beta}^\beta + A^{-\beta/(\beta-\alpha)} \frac{\beta-\alpha}{\beta} \E g^\beta,
\]
and the choice $A = \big(2^{1-\alpha} KD(\beta-\alpha)\big)^{(\beta-\alpha)/\beta}$ yields
\begin{align}\label{eq:entropy_of_power}
  \Ent g^\beta &\le KD\beta \Big( 2^{-\beta} + 2^{-\alpha} \frac{\alpha}{\beta} \big(2^{1-\alpha} KD(\beta-\alpha)\big)^{(\beta-\alpha)/\alpha} \Big) \E |\nabla g|_{\Psi_\beta}^\beta + \frac12 \E g^\beta\nonumber \\
  &\le C\Big((KD)^{1/\beta} + (\beta KD)^{1/\alpha}\Big)^\beta \E |\nabla g|_{\Psi_\beta}^\beta + \frac12 \E g^\beta.
\end{align}
Note that~\eqref{eq:entropy_of_power} is obviously valid also in the case $\alpha=\beta$.
Also, this inequality can be extended to arbitrary non-negative locally Lipschitz function $g \colon \R^n \to \R$.
Moreover, by Lemma 2.2 in~\cite{MR2146071}, for any non-negative $h \colon \R^n \to \R$,
\begin{displaymath}
\Ent h \ge \Big(\log \frac{1}{\mu(\{h > 0\})}\Big) \E h,
\end{displaymath}
which used for $h = g^\beta$ and combined with~\eqref{eq:entropy_of_power} gives that for any non-negative locally Lipschitz function $g \colon \R^n \to \R$,
\begin{align}\label{eq:moment-gradient}
  \Big(\log \frac{1}{\mu(\{g > 0\})} - \frac12 \Big)\E g^\beta \le C\Big((KD)^{1/\beta} + (\beta KD)^{1/\alpha}\Big)^\beta \E |\nabla g|_{\Psi_\beta}^\beta.
\end{align}

Now, applying~\eqref{eq:moment-gradient} to the functions $g = f_+$ and $g = f_-$ and using the assumption $M f = 0$ and the implication $\nabla f_{\pm} \neq 0 \implies \nabla f_\pm = \pm \nabla f$, we obtain
\begin{align*}
(\log 2 - 1/2) \E f_+^\beta &\le C\Big((KD)^{1/\beta} + (\beta KD)^{1/\alpha}\Big)^\beta \E |\nabla f_+|_{\Psi_\beta}^\beta \\
&\le C\Big((KD)^{1/\beta} + (\beta KD)^{1/\alpha}\Big)^\beta \E |\nabla f|_{\Psi_\beta}^\beta\\
\end{align*}
and
\begin{align*}
(\log 2 - 1/2) \E f_-^\beta &\le C\Big((KD)^{1/\beta} + (\beta KD)^{1/\alpha}\Big)^\beta \E |\nabla f_-|_{\Psi_\beta}^\beta \\&\le C\Big((KD)^{1/\beta} + (\beta KD)^{1/\alpha}\Big)^\beta \E |\nabla f|_{\Psi_\beta}^\beta.
\end{align*}
Summing the above inequalities we get
\[
  \frac{1}{10} \E |f|^\beta \le 2 C\Big((KD)^{1/\beta} + (\beta KD)^{1/\alpha}\Big)^\beta  \E |\nabla f|_{\Psi_\beta}^\beta,
\]
which ends the proof.
\end{proof}

Having proven Proposition \ref{prop:Poincare} we can reduce Theorem \ref{thm:Sobolev_centered} to Theorem \ref{thm:Sobolev}.

\begin{proof}[Proof of Theorem \ref{thm:Sobolev_centered}] We apply Theorem \ref{thm:Sobolev} to the function $|f - \E f|$ to get for $p\ge \beta$,
\begin{displaymath}
\|f-\E f\|_p \le \|f-\E f\|_\beta + \frac{2e}{\alpha-1}\Big((KD)^{1/\alpha} + (KD)^{1/\beta}\Big) \bn|\nabla f|_{\Psi_p}\bn_p.
\end{displaymath}
Thus by Proposition \ref{prop:Poincare}, \eqref{ineq:monotonicity} and H\"older's inequality we obtain
\begin{align*}
\|f- \E f\|_p &\le C \Big( (KD)^{1/\beta} + (KD\beta)^{1/\alpha} \Big) \bn |\nabla f|_{\Psi_\beta}\bn_\beta +
  \frac{C}{\alpha-1}\Big((KD)^{1/\alpha} + (KD)^{1/\beta}\Big) \bn|\nabla f|_{\Psi_p}\bn_p\Big) \\
  &\le C\Big(\frac{1}{\alpha-1}(KD)^{1/\beta} + \Big(\frac{1}{\alpha-1} + \beta^{1/\alpha}\Big)(KD)^{1/\alpha} \Big)
    \bn|\nabla f|_{\Psi_p}\bn_p.
\end{align*}
\end{proof}

\subsection{Proofs of results from Section \ref{sec:concentration}}\label{sec:concentration_proofs}
Let us start with the proof of Lemma \ref{lemma:omega-omega-ast}.

\begin{proof}[Proof of Lemma~\ref{lemma:omega-omega-ast}]
The definition of $\omega^\ast_\Psi$ can be written equivalently as
\begin{align*}
  \omega^\ast_\Psi(t) = t \sup\big\{ u>0 \colon \omega_\Psi^{-1}(tu) \ge u \big\}.
\end{align*}
Now recall that if $f \colon \R_+ \to \R_+$ is left-continuous, non-decreasing and satisfies $\lim_{x \to 0} f(x) = 0$, $\lim_{x \to \infty} f(x) = \infty$
and $g \colon \R_+\to \R_+$ is a right-continuous inverse of $f$, i.e.
\[
  g(y) = \sup\{ x > 0 \colon f(x) \le y\},
\]
then for all $x, y > 0$,
\begin{align}\label{ineq:f-generalized-inverse}
  g(y) \ge x \quad \iff \quad f(x) \le y.
\end{align}
Applying~\eqref{ineq:f-generalized-inverse} with $\omega_\Psi$ as $f$ we obtain that $\omega_\Psi^{-1}(tu) \ge u$ if and only if $\omega_\Psi(u) \le tu$, which proves~\eqref{omega-ast-equiv-def}.

For the first inequality in~\eqref{omega-ast-and-lambda}, fix $t > 0$ and take $u > 0$ for which the supremum in~\eqref{omega-ast-equiv-def} is attained (such $u$ exists due to~\eqref{omega-by-t} and left-continuity of $\omega_\Psi$). Then for all $y > u$,
\[
  t < \frac{\omega_\Psi(y)}{y} \le \frac{\omega_\Psi(y)}{y-u},
\]
hence for all $y > u$,
\[
  ty - \omega_\Psi(y) \le tu.
\]
Moreover the above inequality holds trivially for $y \in (0,u]$. Therefore $\lambda(t) \le tu = \omega_\Psi^\ast(t)$.

To prove the second inequality of~\eqref{omega-ast-and-lambda} fix any $u > 0$ satisfying $\omega_\Psi(u)/u \le t$ and note that
\[
  tu \le 2tu - \omega_\Psi(u) \le \sup_{y>0} (2ty - \omega_\Psi(y)) = \lambda(2t).
\]
\end{proof}

Next, we will prove Corollary \ref{cor:Lipschitz_conc}.

\begin{proof}[Proof of Corollary \ref{cor:Lipschitz_conc}] Denote $L := C L(K,D,\alpha,\beta)$. In view of Corollary \ref{cor:Chebyshev} it is enough to show that for all $t, p > 0$,
\begin{align}\label{ineq:rel-p-t}
  p \le a\omega_\Psi^\ast(t/(Lb)) \quad \implies \quad |\nabla f(x)|_{\Psi_p} \le t/L,\; \mu\textrm{-a.e.}
\end{align}
Then for a given $t>0$ take $p = a\omega_\Psi^\ast(t/(Lb))$ and use Corollary~\ref{cor:Chebyshev} and~\eqref{ineq:rel-p-t} to obtain
\begin{align*}
  \mu\Big(|f-\E f| \ge t\Big) &\le \mu\Big(|f-\E f| \ge L \bn |\nabla f|_{\Psi_p} \bn_p \Big) \\
     &\le e^{-p} \ind{p \ge \beta} + \ind{p < \beta} \le e^{\beta - p} = e^{\beta - a\omega_\Psi^\ast(t/(Lb))}.
\end{align*}

Note that the hypothesis $|\nabla f(x)|_{\Psi_a} \le b,\; \mu\textrm{-a.e.}$ implies that for any $t, p > 0$,
\begin{align*}
\Psi_p\Big(\frac{L\nabla f(x)}{t}\Big) &= \frac{1}{p}\Psi\Big(\frac{pL\nabla f(x)}{t}\Big) \le \frac{a}{p}\frac{1}{a}\Psi\Big(\frac{a\nabla f(x)}{b}\Big)\omega_\Psi\Big(\frac{Lpb}{at}\Big)\\
&\le \frac{a}{p}\omega_\Psi\Big(\frac{Lpb}{at}\Big). 
\end{align*}
From~\eqref{omega-ast-equiv-def} in Lemma~\ref{lemma:omega-omega-ast} we have
\[
  \omega_\Psi\Big(\frac{Lb}{t}\frac{p}{a}\Big) \le \frac{p}{a} \quad\iff\quad \omega_\Psi^\ast\Big(\frac{t}{Lb}\Big) \ge \frac{p}{a},
\]
thus~\eqref{ineq:rel-p-t} follows.
\end{proof}

Before we prove Corollary \ref{cor:enlargements}, let us formulate a simple lemma.
\begin{lemma}\label{le:median}
Under the assumptions of Corollary \ref{cor:Lipschitz_conc} we have
\begin{displaymath}
\mu\Big(|f - M_\mu f| \ge t\Big) \le  2\exp\Big(\beta -a\omega_{\Psi}^\ast\Big(\frac{t}{2Lb}\Big)\Big),
\end{displaymath}
where $L = C L(K,D,\alpha,\beta)$ and $C$ is the constant from Corollary~\ref{cor:Lipschitz_conc}.
\end{lemma}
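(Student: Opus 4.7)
The plan is to deduce the median-based concentration from the mean-based concentration established in Corollary~\ref{cor:Lipschitz_conc} via the standard argument: first control $|M_\mu f - \E_\mu f|$ using the concentration estimate with the mean, then apply the triangle inequality.

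First I would bound $|M_\mu f - \E_\mu f|$. Setting $L := C L(K,D,\alpha,\beta)$ and choosing $s_0$ to be the unique value with $a\omega_\Psi^\ast(s_0/(Lb)) = \beta + \log 2$ (which exists since $\omega_\Psi^\ast$ is continuous and strictly increasing with range $\R_+$ by the discussion preceding Corollary~\ref{cor:Lipschitz_conc}), Corollary~\ref{cor:Lipschitz_conc} gives
\[
  \mu(|f - \E_\mu f| \ge s_0) \le \exp(\beta - a\omega_\Psi^\ast(s_0/(Lb))) = 1/2.
\]
On the other hand, by the definition of the median, whichever of $\{f \ge M_\mu f\}$ and $\{f \le M_\mu f\}$ contains $\E_\mu f$ on the correct side yields $\mu(|f - \E_\mu f| \ge |M_\mu f - \E_\mu f|) \ge 1/2$. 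Therefore $|M_\mu f - \E_\mu f| \le s_0$.

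Next I would split into two cases depending on $t$. If $t \ge 2 s_0$, the triangle inequality together with Corollary~\ref{cor:Lipschitz_conc} and monotonicity of $\omega_\Psi^\ast$ yields
\[
  \mu(|f - M_\mu f| \ge t) \le \mu(|f - \E_\mu f| \ge t - s_0) \le \mu(|f - \E_\mu f| \ge t/2) \le \exp\bigl(\beta - a\omega_\Psi^\ast(t/(2Lb))\bigr),
\]
which is dominated by the claimed bound. If $t < 2s_0$, then $t/(2Lb) < s_0/(Lb)$ and by monotonicity $a\omega_\Psi^\ast(t/(2Lb)) \le \beta + \log 2$, so
\[
  2\exp\bigl(\beta - a\omega_\Psi^\ast(t/(2Lb))\bigr) \ge 2 e^{-\log 2} = 1,
\]
and the asserted bound is trivially satisfied. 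Combining the two cases proves the lemma.

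There is no real obstacle here; the entire content is in verifying that the choice of $s_0$ making the mean-deviation probability equal to $1/2$ is small enough that absorbing it into a factor of $1/2$ inside $\omega_\Psi^\ast$ still gives the claimed shape. The main subtlety is cosmetic: one must invoke the monotonicity and the range properties of $\omega_\Psi^\ast$ recorded right before Corollary~\ref{cor:Lipschitz_conc} to legitimately choose $s_0$ and to compare $\omega_\Psi^\ast(t/(2Lb))$ with $\omega_\Psi^\ast(s_0/(Lb))$ in the small-$t$ regime.
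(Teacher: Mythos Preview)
Your proposal is correct and follows essentially the same argument as the paper: choose a threshold $s_0$ (the paper's $t_0$) at which the mean-concentration bound drops to $1/2$, deduce $|M_\mu f - \E_\mu f| \le s_0$, then split into $t \ge 2s_0$ and $t < 2s_0$. The only cosmetic difference is that the paper takes $t_0$ as the smallest value with $a\omega_\Psi^\ast(t_0/(Lb)) \ge \beta + \log 2$ (using right-continuity of $\omega_\Psi^\ast$) rather than asserting an exact equality via continuity, but this does not affect the argument.
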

\begin{proof}
Since $\omega_\Psi^{-1}$ is increasing and right-continuous, we can take $t_0$ to be a smallest positive real satisfying
\[
  \omega^\ast\Big(\frac{t_0}{Lb}\Big) \ge \frac{\beta+\log 2}{a},
\]
or equivalently, $\exp\big(\beta- a \omega_\Psi^\ast(t/(Lb)\big) \le 1/2$.
Then by Corollary~\ref{cor:Lipschitz_conc}, $\mu\big(|f - \E f| \ge t_0\big) \le 1/2$ and thus $|M f - \E f| \le t_0$. Therefore, using Corollary~\ref{cor:Lipschitz_conc} for $t \ge 2 t_0$,
\[
\mu\Big(|f - M f| \ge t\Big) \le \mu\Big(|f - \E f| \ge \frac{t}{2}\Big) \le \exp\Big(\beta -a\omega_{\Psi}^\ast\Big(\frac{t}{2Lb}\Big)\Big).
\]
On the other hand if $t < 2t_0$ then by the definition of $t_0$,
\begin{displaymath}
2\exp\Big(\beta -a\omega_{\Psi}^\ast\Big(\frac{t}{2Lb}\Big)\Big) > 1,
\end{displaymath}
so the inequality of the lemma holds trivially.
\end{proof}

\begin{proof}[Proof of Corollary \ref{cor:enlargements}]
We recall that if $\Phi$ is a Young function on $\R^n$, $\Phi^\ast$ is the Legendre transform of $\Phi$ and $|\cdot|_\Phi^\ast$ denotes the norm on $\R^n$, dual to $|\cdot|_{\Phi}$, then for all $x \in \R^n$,
\begin{align}\label{ineq:comparison-dual-norms}
|x|_{\Phi^\ast} \le |x|_\Phi^\ast \le 2|x|_{\Phi^\ast}.
\end{align}

Using the lower bound from~\eqref{ineq:growth-condition}, one can show that $\Psi^\ast(x) = 0$ iff $x=0$ and $\Psi^\ast(x) < \infty$ at all $x$. Further, notice that $\{x\colon \Psi^\ast(x) < u\} = B(|\cdot|_{u^{-1}\Psi^\ast},1) :=\{x\colon |x|_{u^{-1}\Psi^\ast} < 1\}$. Define the function
\begin{displaymath}
f(x) = \inf_{y \in A}|x-y|_{u^{-1}\Psi^\ast}.
\end{displaymath}
The function $f$ is 1-Lipschitz with respect to the norm $|\cdot|_{u^{-1}\Psi^\ast}$, which implies that $\nabla f$ exists almost everywhere and $|\nabla f|_{u^{-1}\Psi^\ast}^\ast \le 1$. Since $u^{-1}\Psi^\ast = (\Psi_u)^\ast$, \eqref{ineq:comparison-dual-norms} implies that $|\nabla f|_{\Psi_u} \le 1$.

Therefore, by Lemma \ref{le:median}, applied with $a = u$, $b=1$, $t = 1$,
\begin{displaymath}
\mu\Big(f \ge Mf + 1\Big)\le 2\exp\Big(\beta - u\omega_\Psi^\ast(1/L)\Big),
\end{displaymath}
where $L = C L(K,D,\alpha,\beta)$.
By the assumption $\mu(A) \ge 1/2$, we have $M f = 0$ and so
\[
  A+\{\Psi^\ast(x)< u\} = \{f < 1\} = \{f < Mf + 1\},
\]
therefore the above inequality yields
\begin{displaymath}
\mu\Big(A+\{\Psi^\ast(x)< u\}\Big) \ge 1 - 2\exp\Big(\beta - u\omega_\Psi^\ast(1/L)\Big).
\end{displaymath}
It remains to bound $\omega_\Psi^\ast(1/L)$ from below.
Note that by the convexity of $\Psi$ and \eqref{ineq:bounds-on-omega}, the function $\omega_\Psi$ is convex and everywhere finite, hence continuous. Thus by Lemma~\ref{lemma:omega-omega-ast},
\begin{align}\label{ineq:omega-1-L}
  \omega^\ast_\Psi(1/L) = w/L,
\end{align}
where $w$ is such that $\omega_\Psi(w)/w = 1/L$. On the other hand, the upper bound in~\eqref{ineq:bounds-on-omega} yields
\[
  \frac{1}{L} = \frac{\omega_\Psi(w)}{w} \le K (w^{\alpha-1} \lor w^{\beta-1}),
\]
hence
\[
  w \ge \frac{1}{(KL)^{1/(\alpha-1)}} \land \frac{1}{(KL)^{1/(\beta-1)}} \ge \frac{1}{(KL)^{1/(\alpha-1)}} \land 1.
\]
Combining the above lower bound on $w$ with~\eqref{ineq:omega-1-L} we obtain
\[
  \omega^\ast_\Psi(1/L) \ge (K^{-1/(\alpha-1)} L^{-\alpha/(\alpha-1)}) \land L^{-1}.
\]
\end{proof}

\subsection{Proofs of results from Section \ref{sec:polynomials}}\label{sec:polynomials_proofs}

Let us start with the proofs of results concerning Banach space valued homogeneous polynomials.

\begin{proof}[Proof of Theorem \ref{thm:Borell}]
We will proceed by induction on $k$. Note that the norm $|\cdot|_E$ can be expressed as a supremum of countably many functionals, therefore it is enough to prove the theorem for $(E,|\cdot|) = (\ell_\infty^N, |\cdot|_\infty)$ with arbitrarily large, finite $N$. We will denote the index related to the $\ell_\infty^N$ structure in the superscript, i.e.
$a_{i_1,\ldots,i_k} = (a^1_{i_1,\ldots,i_k},\ldots,a^N_{i_1,\ldots,i_k})$ and $A^r = (a^r_{i_1,\ldots,i_k})_{i_1,\ldots,i_k=1}^n$.

For $k = 1$, the inequality in question is then just a dual formulation of Theorem \ref{thm:Sobolev_centered} for $f(x) = \max_{r\le N} |\sum_{i=1}^n a^r_i x_i|$. Indeed, it is enough to note that $A_{\Psi,p}$ is the unit ball for the Orlicz norm corresponding to the function $\frac{1}{p}\Psi^\ast$, which up to universal multiplicative constants is equivalent to the dual norm for $|\cdot|_{\Psi_p}$.

Let us thus assume that the theorem is true for all homogeneous forms of degree strictly smaller than $k$. We can assume that the real-valued matrices $\pm (a^r_{i_1,\ldots,i_k})$ are pairwise distinct and nonzero for $1\le r \le N$. Since the set of zeros of a non-trivial polynomial is Lebesgue null, there exist open sets $B_r^\varepsilon$, $r = 1,\ldots,N$, $\varepsilon = \pm 1$, such that $\R^n\setminus (\bigcup_{r=1}^n\bigcup_{\varepsilon=\pm 1} B_r^\varepsilon)$ is Lebesgue null and on $B_r^\varepsilon$,
\begin{displaymath}
|\langle A,x^{\otimes k}\rangle|_E  = \varepsilon \langle A^r,x^{\otimes k}\rangle.
\end{displaymath}
Denoting $[k] = \{1,\ldots,k\}$, we get that almost everywhere on $B_r^\varepsilon$,
\begin{align*}
\nabla f (x) & = \varepsilon \Big(\sum_{i_1,\ldots,i_k=1}^n \Big(a^r_{i_1,\ldots,i_k} \sum_{s=1}^k \ind{i_s = j}\prod_{\stackrel{1\le u\le k}{u\neq s}}x_{i_s}\Big)\Big)_{j=1}^n\\
& = \varepsilon \sum_{s=1}^k \Big(\sum_{(i_m)_{m \in [k]\setminus \{s\}} \in [n]^{k-1}} \Big(a^r_{i_1,\ldots,,i_{s-1},j,i_{s+1},\ldots,i_k} \prod_{\stackrel{1\le u\le k}{u\neq s}}x_{i_s}\Big)\Big)_{j=1}^n\\
& = k\varepsilon \Big(\sum_{i_2,\ldots,i_k=1}^n a^r_{j,i_2,\ldots,i_k}x_{i_2}\cdots x_{i_k}\Big)_{j=1}^n,
\end{align*}
where in the last equality we used the symmetry of the coefficients $a^r_{i_1,\ldots,i_k}$.
Denoting $B_r = B_r^1\cup B_r^{-1}$ we see that on $B_r$, we have
\begin{align*}
|\nabla f(x)|_{\Psi_p} & \le Ck \sup_{y \in A_{\Psi,p}} \Big|\sum_{i_1,\ldots,i_k=1}^n a^r_{i_1,\ldots,i_k} y_{i_1} x_{i_2}\cdots x_{i_k}\Big|\\
& = Ck \sup_{y \in A_{\Psi,p}} |\langle A_r, y\otimes x^{\otimes(k-1)}\rangle|.
\end{align*}
Thus almost surely
\begin{displaymath}
|\nabla f(X)|_{\Psi_p} \le Ck \max_{1\le r \le N} \sup_{y \in A_{\Psi,p}}|\langle A^r, y\otimes X^{\otimes(k-1)}\rangle|
\end{displaymath}
The right-hand side above is a supremum of homogeneous forms of degree $k-1$ in $X$, moreover by separability of $A_{\Psi,p}$ it can be clearly approximated by suprema of a finite number of such forms. Therefore, by the induction assumption,
\begin{align}\label{eq:chaos_induction}
\Big\| |\nabla f(X)|_{\Psi_p} \Big\|_p \le& Ck \E \max_{1\le r \le N} \sup_{y \in A_{\Psi,p}}|\langle A^r, y\otimes X^{\otimes(k-1)}\rangle|
 \nonumber \\&+ C_{D,K,\alpha,\beta,k-1} \sum_{j=2}^k  \E \max_{1\le r\le N} \sup_{y\in A_{\Psi_p}}\sup_{y^2,\ldots,y^j \in A_{\Psi,p}} |\langle A^r,y\otimes y^2\otimes \cdots\otimes y^j \otimes X^{\otimes(k-j)}\rangle|\nonumber\\
=& C_{D,K,\alpha,\beta,k}  \sum_{j=1}^k  \E  \sup_{y^1,y^2,\ldots,y^j \in A_{\Psi,p}}\max_{1\le r\le N} |\langle A^r,y^1\otimes y^2\otimes \cdots\otimes y^j \otimes X^{\otimes(k-j)}\rangle|\nonumber
\end{align}
This ends the proof of \eqref{eq:Borell}, since by Theorem \ref{thm:Sobolev_centered},
\begin{displaymath}
\bn Z - \E Z \bn_p \le C_{D,K,\alpha,\beta} \Big\| |\nabla f(X)|_{\Psi_p} \Big\|_p.
\end{displaymath}
The second estimate of Theorem \ref{thm:Borell} follows now by the Chebyshev inequality.
\end{proof}

We will now pass to the proof of Theorem \ref{thm:higher_derivatives}. Let us start with the main tool, which is Corollary \ref{cor:comparison}.

\begin{proof}[Proof of Corollary \ref{cor:comparison}]
Relating the Legendre transform of $\tilde{\Phi}$ to the conjugation in a sense of Lemma~\ref{lemma:omega-omega-ast}, one can deduce that the growth condition~\eqref{ineq:growth-of-Phi} on $\tilde{\Phi}$ implies that $\tilde{\Phi}^\ast$ satisfies
\[
  K'^{-1} t^\alpha \le \frac{\tilde{\Phi}^\ast(tu)}{\tilde{\Phi}^\ast(u)} \le K' t^\beta
\]
for all $t \ge 1$ and $u > 0$, where $K' = C(K,\alpha,\beta)$. As a consequence, $\Psi$ satisfies~($G_{K',\alpha,\beta}$) and therefore, for $p \ge \beta$ the corollary is a direct consequence of Theorem~\ref{thm:Sobolev_centered} and Theorem \ref{thm:GK}.

For $2\le p < \beta$ we use the fact that if $\mu$ satisfies $mLSI(\Psi,D)$, then it also satisfies the Poincar\'e inequality
\begin{displaymath}
\Var(f) \le C_D \E |\nabla f|^2
\end{displaymath}
(see \cite{MR2351133}),
which as is well known (see e.g. \cite{MR2146071} or \cite{Milman_role_iso}) implies that
\begin{displaymath}
\|f - \E f\|_p \le C'_D p\bn|\nabla f|\bn_p.
\end{displaymath}
Moreover, due to the normalization $\Phi(1) = 1$ one can easily get $|x| \le C|x|_{\Psi_p}$ for $p\ge 1$,
which allows to deduce the corollary for $2\le p < \beta$.
\end{proof}

\begin{proof}[Proof of Theorem \ref{thm:higher_derivatives}] Given Corollary \ref{cor:comparison}, the proof follows with just formal changes the proof of Proposition 3.2. in \cite{Nonlipschitz}.
\end{proof}

\subsection{Proofs of results from Section \ref{sec:BCG}} \label{sec:BCG_proofs}
In this section we wil present the proof of Theorem \ref{thm:BCG}. Theorem \ref{thm:BCG-C2}  will then follow by specializing to $k = 2$.

Let us start with the following simple lemma.
\begin{lemma} \label{le:mth_derivative} In the setting of Theorem \ref{thm:BCG-C2}, for every $m < k$,
\begin{displaymath}
\bn |\D^m f - \E \D^m f|_2\bn_2 \le \sqrt{2}L\bn|\D^{m+1} f|_2\bn_2
\end{displaymath}
\end{lemma}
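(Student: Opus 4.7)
The plan is to reduce the tensor-valued inequality to a sum of scalar Poincaré inequalities, applied component-wise to the partial derivatives of $f$ of order $m$.

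First I would note that the hypothesis \eqref{eq:BCG_assumption} specialized to $p=2$ yields the Poincaré inequality
\[
  \|g - \E g\|_2^2 \le 2 L^2 \, \E |\nabla g|_2^2
\]
for every smooth (scalar) function $g \colon \R^n \to \R$. This is the only consequence of the standing assumption that will be used.

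Next, I would identify $\D^m f$ with the array of partial derivatives $(\partial_{i_1\cdots i_m} f)_{i_1,\ldots,i_m=1}^n$ and expand the left-hand side using Fubini:
\[
  \bn |\D^m f - \E \D^m f|_2 \bn_2^2
    = \sum_{i_1,\ldots,i_m=1}^n \bn \partial_{i_1\cdots i_m} f - \E\, \partial_{i_1\cdots i_m} f \bn_2^2.
\]
To each scalar function $g_{i_1,\ldots,i_m} := \partial_{i_1\cdots i_m} f$ I would apply the Poincaré inequality above, obtaining
\[
  \bn g_{i_1,\ldots,i_m} - \E g_{i_1,\ldots,i_m}\bn_2^2
     \le 2 L^2 \sum_{j=1}^n \E \bigl(\partial_j \partial_{i_1\cdots i_m} f\bigr)^2.
\]
Summing over $(i_1,\ldots,i_m)$ and recognizing the right-hand side as $2L^2 \E |\D^{m+1} f|_2^2 = 2L^2 \bn |\D^{m+1} f|_2 \bn_2^2$, I would conclude
\[
  \bn |\D^m f - \E \D^m f|_2 \bn_2^2 \le 2 L^2 \bn |\D^{m+1} f|_2 \bn_2^2,
\]
which gives the stated inequality after taking square roots.

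There is no real obstacle here: the assumption that $f \in \mathcal{C}^k$ and $m<k$ ensures that all derivatives involved exist and are smooth enough to justify applying the Poincaré inequality component-wise, and Fubini is used only on non-negative integrands. The only minor point to address, if one wishes to be formal, is that \eqref{eq:BCG_assumption} is stated for smooth functions and one may need a standard truncation/approximation argument if $\partial_{i_1\cdots i_m} f$ is not a priori $\mu$-integrable — but this is routine and does not affect the main estimate.
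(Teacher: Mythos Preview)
Your proof is correct. It differs from the paper's argument only in presentation: the paper does not expand $|\D^m f - \E \D^m f|_2^2$ as an index sum but instead introduces an auxiliary standard Gaussian vector $G$ in $\R^{n^m}$, writes $|\D^m f - \E \D^m f|_2^2 = \E_G \langle \D^m f - \E \D^m f, G\rangle^2$, and then applies the assumption \eqref{eq:BCG_assumption} (with $p=2$) conditionally on $G$ to the scalar function $x \mapsto \langle \D^m f(x), G\rangle$, finishing via $\E_G |\nabla \langle \D^m f(X), G\rangle|_2^2 = |\D^{m+1} f(X)|_2^2$. Underneath, this Gaussian randomization is doing exactly the component-wise Poincar\'e step you wrote out: the identity $\E_G \langle v, G\rangle^2 = |v|_2^2$ is just a coordinate-free way of summing squares. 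Your version is a bit more elementary (no auxiliary randomness needed), while the paper's version avoids multi-index bookkeeping; the content is the same.
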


\begin{proof} We will regard $\D^m f$ as a vector in $(\R^n)^{\otimes m} \simeq \R^{n^m}$.
Let $X$ be a random vector distributed according to $\mu$ and $G$ a standard Gaussian vector in $\R^{n^m}$, independent of $X$. Then
\begin{align*}
\bn |\D^m f - \E \D^m f|_2\bn_2^2 &= \E_X |\D^m f(X) - \E_X \D^m f(X)|_2^2 = \E_G \E_X\langle  \D^m f(X) - \E_X \D^m f(X), G\rangle ^2\\
& \le 2L^2 \E_G \E_X |\D \langle  \D^m f(X),G \rangle|_2^2,
\end{align*}
where the second equality follows from the Fubini theorem and the inequality from the assumption \eqref{eq:BCG_assumption}, applied conditionally on $G$ to the function $x \mapsto \langle \D^m f(x),G\rangle$. Now, it is easy to see that
\begin{displaymath}
\E_G |\D \langle  \D^m f(X), G\rangle|_2^2 =  |\D^{m+1} f(X)|_2^2,
\end{displaymath}
which ends the proof.
\end{proof}

\begin{cor}\label{cor:mth_derivative}
    In the setting of Theorem \ref{thm:BCG-C2}, for all $k \ge 2$,
\begin{displaymath}
\bn |\nabla f |_2\bn_2 \le (\sqrt{2}L)^{k-1} \bn|\D^{k} f|_2\bn_2 + \sum_{m=1}^{k-1} (\sqrt{2}L)^{m-1} |\E_\mu \D^m f|_2.
\end{displaymath}
\end{cor}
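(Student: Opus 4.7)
The plan is to derive Corollary \ref{cor:mth_derivative} by a straightforward induction on $k$, using Lemma \ref{le:mth_derivative} as the one-step estimate and the triangle inequality as the glue.

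First, I would regard $\D^m f$ as a random vector in $(\R^n)^{\otimes m} \simeq \R^{n^m}$ (as already done in the proof of Lemma \ref{le:mth_derivative}) and note that pointwise
\[
  |\D^m f(x)|_2 \le |\D^m f(x) - \E_\mu \D^m f|_2 + |\E_\mu \D^m f|_2.
\]
Taking $L^2(\mu)$-norms and using that $\E_\mu \D^m f$ is a deterministic tensor, this yields
\[
  \bn |\D^m f|_2 \bn_2 \le \bn |\D^m f - \E_\mu \D^m f|_2 \bn_2 + |\E_\mu \D^m f|_2.
\]
Combined with Lemma \ref{le:mth_derivative}, for every $1 \le m \le k-1$ I obtain the one-step recursion
\[
  a_m \le \sqrt{2}L \, a_{m+1} + |\E_\mu \D^m f|_2, \qquad a_m := \bn |\D^m f|_2 \bn_2.
\]

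Then I would iterate this inequality starting from $m=1$: a simple induction on $j \in \{1,\dots,k-1\}$ gives
\[
  a_1 \le (\sqrt{2}L)^{j} a_{j+1} + \sum_{m=1}^{j} (\sqrt{2}L)^{m-1} |\E_\mu \D^m f|_2,
\]
and taking $j = k-1$ yields exactly the claimed bound, since $a_1 = \bn |\nabla f|_2 \bn_2$.

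There is essentially no obstacle here; the only point requiring a line of justification is the triangle inequality at the tensor level, which is immediate once one identifies $\D^m f$ with a random element of a Euclidean space of dimension $n^m$. Everything else is a mechanical telescoping of the one-step bound supplied by Lemma \ref{le:mth_derivative}.
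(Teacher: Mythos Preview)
Your proof is correct and is exactly the argument the paper has in mind: the paper's own proof consists of the single sentence ``An induction on $k$, using Lemma \ref{le:mth_derivative},'' and your write-up is precisely the natural unfolding of that induction via the triangle inequality.
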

\begin{proof}
An induction on $k$, using Lemma \ref{le:mth_derivative}.
\end{proof}

\begin{proof}[Proof of Theorem \ref{thm:BCG}]
By \eqref{eq:BCG_assumption}, we have
\begin{align}\label{eq:BCG_proof}
\|f - \E_\mu f\|_p \le L\sqrt{p}\bn|\nabla f|_2\bn_p.
\end{align}

It is easy to prove that $|\nabla|_2$ is locally Lipschitz and $\big|\nabla |\nabla f|_2\big|_2 \le |\D^2 f|_{\textup{op}}$ $\mu$-a.s. Indeed, we have for $x \in \R^n$ and $|h| \to 0$, by the triangle inequality and Taylor's expansion,
\begin{align*}
\frac{\Big| |\nabla f(x+h)|_2 - |\nabla f(x)|_2\Big|}{|h|} & \le \frac{|(\sum_{j=1}^n \frac{\partial f}{\partial x_j x_i }(x) h_j + o(|h|))_{i=1}^n|_2}{|h|} \\
& \le \frac{|\D^2 f(x) h|_2}{|h|} + o(1) \le |\D^2 f(x)|_{\textup{op}} + o(1),
\end{align*}
which via standard compactness arguments yields that $|\nabla f|_2$ is locally Lipschitz and that if $\nabla |\nabla f(x)|_2$ exists (which happens $\mu$-a.s.), then its Euclidean norm does not exceed $|\D^2 f(x)|_{\textup{op}}$.

Thus, another application of $\eqref{eq:BCG_assumption}$ gives
\begin{displaymath}
\bn|\nabla f|_2 \bn_p \le \E_\mu |\nabla f|_2+ \bn|\nabla f|_2- \E_\mu |\nabla f|_2 \bn_p\le \E_\mu|\nabla f|_2 + L\sqrt{p} \bn|\D^2 f|_{\textup{op}}\bn_p,
\end{displaymath}
which together with \eqref{eq:BCG_proof} gives the first inequality of \eqref{eq:BCG_statement}. The second inequality follows now by Corollary \ref{cor:mth_derivative}.
\end{proof}

\bibliographystyle{abbrv}
\bibliography{citations}

\begin{thebibliography}{10}

\bibitem{AdLogSobConv}
R.~Adamczak.
\newblock Logarithmic {S}obolev inequalities and concentration of measure for
  convex functions and polynomial chaoses.
\newblock {\em Bull. Pol. Acad. Sci. Math.}, 53(2):221--238, 2005.

\bibitem{Chaos3d}
R.~Adamczak and R.~Lata{\l}a.
\newblock Tail and moment estimates for chaoses generated by symmetric random
  variables with logarithmically concave tails.
\newblock {\em Ann. Inst. Henri Poincar\'e Probab. Stat.}, 48(4):1103--1136,
  2012.

\bibitem{Nonlipschitz}
R.~Adamczak and P.~Wolff.
\newblock Concentration inequalities for non-{L}ipschitz functions with bounded
  derivatives of higher order.
\newblock {\em Probab. Theory Related Fields. Published online. DOI:
  10.1007/s00440-014-0579-3}, 2014.

\bibitem{MR1258492}
S.~Aida and D.~Stroock.
\newblock Moment estimates derived from {P}oincar\'e and logarithmic {S}obolev
  inequalities.
\newblock {\em Math. Res. Lett.}, 1(1):75--86, 1994.

\bibitem{logSob}
C.~An{\'e}, S.~Blach{\`e}re, D.~Chafa{\"{\i}}, P.~Foug{\`e}res, I.~Gentil,
  F.~Malrieu, C.~Roberto, and G.~Scheffer.
\newblock {\em Sur les in\'egalit\'es de {S}obolev logarithmiques}, volume~10
  of {\em Panoramas et Synth\`eses [Panoramas and Syntheses]}.
\newblock Soci\'et\'e Math\'ematique de France, Paris, 2000.

\bibitem{MR1201060}
M.~A. Arcones and E.~Gin{\'e}.
\newblock On decoupling, series expansions, and tail behavior of chaos
  processes.
\newblock {\em J. Theoret. Probab.}, 6(1):101--122, 1993.

\bibitem{BLG}
D.~Bakry, I.~Gentil, and M.~Ledoux.
\newblock {\em Analysis and geometry of {M}arkov diffusion operators}.
\newblock Springer, Cham, 2014.

\bibitem{MR2438906}
F.~Barthe and A.~V. Kolesnikov.
\newblock Mass transport and variants of the logarithmic {S}obolev inequality.
\newblock {\em J. Geom. Anal.}, 18(4):921--979, 2008.

\bibitem{MR2430612}
F.~Barthe and C.~Roberto.
\newblock Modified logarithmic {S}obolev inequalities on {$\Bbb R$}.
\newblock {\em Potential Anal.}, 29(2):167--193, 2008.

\bibitem{MR954373}
W.~Beckner.
\newblock A generalized {P}oincar\'e inequality for {G}aussian measures.
\newblock {\em Proc. Amer. Math. Soc.}, 105(2):397--400, 1989.

\bibitem{Bobkov-Extremal-log-concave}
S.~Bobkov.
\newblock Extremal properties of half-spaces for log-concave distributions.
\newblock {\em Ann. Probab.}, 24(1):35--48, 1996.

\bibitem{BCG}
S.~Bobkov, G.~Chistyakov, and F.~G{\"o}tze.
\newblock Second order concentration on the sphere.
\newblock {\em Preprint}, 2015.
\newblock {\tt http://arxiv.org/abs/1502.04178}.

\bibitem{BobLed_exp}
S.~Bobkov and M.~Ledoux.
\newblock Poincar\'e's inequalities and {T}alagrand's concentration phenomenon
  for the exponential distribution.
\newblock {\em Probab. Theory Related Fields}, 107(3):383--400, 1997.

\bibitem{MR1682772}
S.~G. Bobkov and F.~G{\"o}tze.
\newblock Exponential integrability and transportation cost related to
  logarithmic {S}obolev inequalities.
\newblock {\em J. Funct. Anal.}, 163(1):1--28, 1999.

\bibitem{MR1800062}
S.~G. Bobkov and M.~Ledoux.
\newblock From {B}runn-{M}inkowski to {B}rascamp-{L}ieb and to logarithmic
  {S}obolev inequalities.
\newblock {\em Geom. Funct. Anal.}, 10(5):1028--1052, 2000.

\bibitem{MR2146071}
S.~G. Bobkov and B.~Zegarlinski.
\newblock Entropy bounds and isoperimetry.
\newblock {\em Mem. Amer. Math. Soc.}, 176(829):x+69, 2005.

\bibitem{Bo}
C.~Borell.
\newblock On the {T}aylor series of a {W}iener polynomial.
\newblock {\em Seminar Notes on multiple stochastic integration, polynomial
  chaos and their integration. Case Western Reserve Univ., Cleveland}, 1984.

\bibitem{Federbush}
P.~Federbush.
\newblock Partially alternate derivation of a result of {N}elson.
\newblock {\em J. Math. Phys.}, 10:50–52, 1969.

\bibitem{MR2487856}
I.~Gentil.
\newblock From the {P}r\'ekopa-{L}eindler inequality to modified logarithmic
  {S}obolev inequality.
\newblock {\em Ann. Fac. Sci. Toulouse Math. (6)}, 17(2):291--308, 2008.

\bibitem{MR2198019}
I.~Gentil, A.~Guillin, and L.~Miclo.
\newblock Modified logarithmic {S}obolev inequalities and transportation
  inequalities.
\newblock {\em Probab. Theory Related Fields}, 133(3):409--436, 2005.

\bibitem{MR2351133}
I.~Gentil, A.~Guillin, and L.~Miclo.
\newblock Modified logarithmic {S}obolev inequalities in null curvature.
\newblock {\em Rev. Mat. Iberoam.}, 23(1):235--258, 2007.

\bibitem{GK}
E.~D. Gluskin and S.~Kwapie{\'n}.
\newblock Tail and moment estimates for sums of independent random variables
  with logarithmically concave tails.
\newblock {\em Studia Math.}, 114(3):303--309, 1995.

\bibitem{MR2682264}
N.~Gozlan.
\newblock Poincar\'e inequalities and dimension free concentration of measure.
\newblock {\em Ann. Inst. Henri Poincar\'e Probab. Stat.}, 46(3):708--739,
  2010.

\bibitem{MR0420249}
L.~Gross.
\newblock Logarithmic {S}obolev inequalities.
\newblock {\em Amer. J. Math.}, 97(4):1061--1083, 1975.

\bibitem{HW}
D.~L. Hanson and F.~T. Wright.
\newblock A bound on tail probabilities for quadratic forms in independent
  random variables.
\newblock {\em Ann. Math. Statist.}, 42:1079--1083, 1971.

\bibitem{L3}
R.~Lata{\l}a.
\newblock Tail and moment estimates for sums of independent random vectors with
  logarithmically concave tails.
\newblock {\em Studia Math.}, 118(3):301--304, 1996.

\bibitem{L1}
R.~Lata{\l}a.
\newblock Tail and moment estimates for some types of chaos.
\newblock {\em Studia Math.}, 135(1):39--53, 1999.

\bibitem{L2}
R.~Lata{\l}a.
\newblock Estimates of moments and tails of {G}aussian chaoses.
\newblock {\em Ann. Probab.}, 34(6):2315--2331, 2006.

\bibitem{MR1796718}
R.~Lata{\l}a and K.~Oleszkiewicz.
\newblock Between {S}obolev and {P}oincar\'e.
\newblock In {\em Geometric aspects of functional analysis}, volume 1745 of
  {\em Lecture Notes in Math.}, pages 147--168. Springer, Berlin, 2000.

\bibitem{LedouxConcBook}
M.~Ledoux.
\newblock {\em The concentration of measure phenomenon}, volume~89 of {\em
  Mathematical Surveys and Monographs}.
\newblock American Mathematical Society, Providence, RI, 2001.

\bibitem{Loch}
R.~{\L}ochowski.
\newblock Moment and tail estimates for multidimensional chaoses generated by
  symmetric random variables with logarithmically concave tails.
\newblock In {\em Approximation and probability}, volume~72 of {\em Banach
  Center Publ.}, pages 161--176. Polish Acad. Sci., Warsaw, 2006.

\bibitem{Milman_role_iso}
E.~Milman.
\newblock On the role of convexity in isoperimetry, spectral gap and
  concentration.
\newblock {\em Invent. Math.}, 177(1):1--43, 2009.

\bibitem{PisierProbabMethods}
G.~Pisier.
\newblock Probabilistic methods in the geometry of {B}anach spaces.
\newblock In {\em Probability and analysis ({V}arenna, 1985)}, volume 1206 of
  {\em Lecture Notes in Math.}, pages 167--241. Springer, Berlin, 1986.

\bibitem{MR812396}
O.~S. Rothaus.
\newblock Analytic inequalities, isoperimetric inequalities and logarithmic
  {S}obolev inequalities.
\newblock {\em J. Funct. Anal.}, 64(2):296--313, 1985.

\bibitem{MR1452824}
O.~S. Rothaus.
\newblock Logarithmic {S}obolev inequalities and the growth of {$L^p$} norms.
\newblock {\em Proc. Amer. Math. Soc.}, 126(8):2309--2314, 1998.

\bibitem{MR2520723}
J.~Shao.
\newblock Modified logarithmic {S}obolev inequalities and transportation cost
  inequalities in {$\Bbb R^n$}.
\newblock {\em Potential Anal.}, 31(2):183--202, 2009.

\bibitem{MR0109101}
A.~J. Stam.
\newblock Some inequalities satisfied by the quantities of information of
  {F}isher and {S}hannon.
\newblock {\em Information and Control}, 2:101--112, 1959.

\bibitem{MR1122615}
M.~Talagrand.
\newblock A new isoperimetric inequality and the concentration of measure
  phenomenon.
\newblock In {\em Geometric aspects of functional analysis (1989--90)}, volume
  1469 of {\em Lecture Notes in Math.}, pages 94--124. Springer, Berlin, 1991.

\bibitem{TalCan}
M.~Talagrand.
\newblock The supremum of some canonical processes.
\newblock {\em Amer. J. Math.}, 116(2):283--325, 1994.

\end{thebibliography}
\end{document}